\newcommand{\lyxmathsym}[1]{\ifmmode\begingroup\def\b@ld{bold}
  \text{\ifx\math@version\b@ld\bfseries\fi#1}\endgroup\else#1\fi}
\numberwithin{equation}{section}
\numberwithin{figure}{section}
\theoremstyle{plain}
\newtheorem{thm}{\protect\theoremname}
  \theoremstyle{plain}
  \newtheorem{prop}[thm]{\protect\propositionname}
  \theoremstyle{plain}
  \newtheorem{cor}[thm]{\protect\corollaryname}
  \theoremstyle{plain}
  \newtheorem{lem}[thm]{\protect\lemmaname}
\numberwithin{thm}{section}
  \providecommand{\corollaryname}{Corollary}
  \providecommand{\lemmaname}{Lemma}
  \providecommand{\propositionname}{Proposition}
\providecommand{\theoremname}{Theorem}
\begin{document}

\title{Intersection Theory and the Horn Inequalities \\for Invariant Subspaces}

\thanks{The authors were supported in part by grants from the National Science
Foundation.}

\author{H. Bercovici and W. S. Li}

\address{Departent of Mathematics, Indiana University, Bloomington, IN 47405}

\email{bercovic@indiana.edu}

\address{School of Mathematics, Georgia Institute of Technology, Atlanta,
GA 30332}

\email{li@math.gatech.edu}
\begin{abstract}
We provide a direct, intersection theoretic, argument that the Jordan
models of an operator of class $C_{0}$, of its restriction to an
invariant subspace, and of its compression to the orthogonal complement,
satisfy a multiplicative form of the Horn inequalities, where `inequality'
is replaced by `divisibility'. When one of these inequalities is saturated,
we show that there exists a splitting of the operator into quasidirect
summands which induces similar splittings for the restriction of the
operator to the given invariant subspace and its compression to the
orthogonal complement. The result is true even for operators acting
on nonseparable Hilbert spaces. For such operators the usual Horn
inequalities are supplemented so as to apply to all the Jordan blocks
in the model.
\end{abstract}
\maketitle

\section{Introduction\label{sec:Introduction}}

\markboth{}{}Consider a complex Hilbert space $\mathcal{H}$ and
an operator $T$ of class $C_{0}$ acting on it. It is known (see
\cite{C0-book,SN}) that $T$ is quasisimilar to a uniquely determined
Jordan model, that is, to an operator of the form
\[
\bigoplus_{1\le n<\aleph}S(\theta_{n}),
\]
where the sum is indexed by ordinal numbers $n$ less than some cardinal
$\aleph$ and each $\theta_{n}$ is an inner function in the unit
disk such that $\theta_{n}$ divides $\theta_{m}$ if ${\rm card}(m)\le{\rm card}(n)$.
If $\mathcal{H}'$ is an invariant subspace for $T$, the restriction
$T|\mathcal{H}'$ and the compression $P_{\mathcal{H}''}T|\mathcal{H}''$,
$\mathcal{H}''=\mathcal{H}\ominus\mathcal{H}'$, are of class $C_{0}$
as well, and therefore they have Jordan models, say
\[
\bigoplus_{1\le n<\aleph}S(\theta_{n}')\text{ and }\bigoplus_{1\le n<\aleph}S(\theta_{n}^{\prime\prime}).
\]
(Note that the first ordinal in the sum is $1$ rather than $0$ as
in \cite{C0-book}. This way of labeling the Jordan blocks is more
convenient for stating the Horn inequalities.) It has been known for
some time \cite{BLT-on-LR1,BLT-on-LR2,li-mull1} that the functions
$\{\theta_{n},\theta'_{n},\theta_{n}^{\prime\prime}:1\le n<\aleph_{0}\}$
satisfy a version of the Littlewood-Richardson rule provided that
\[
\bigwedge_{n=1}^{\infty}\theta_{n}\equiv1
\]
 and, due to results of \cite{kly,KT}, this is equivalent (in the
case of finite multiplicity $N$) to saying that $\prod_{n=1}^{N}\theta_{n}=\prod_{n=1}^{N}(\theta'_{n}\theta_{n}^{\prime\prime})$,
and that these functions satisfy a collection of divisibility relations,
analogous to the Horn inequalities. The collection of these divisibility
relations is indexed by triples of Schubert cells in a Grassmann variety
whose intersection has dimension zero. It is natural to ask whether
a direct connection between these divisibility relations and intersection
theory can be made. Indeed, in the context of finitely generated torsion
modules over a principal ideal domain, such a connection was made
in \cite{BDL-Szeged}, and it is our purpose to extend that approach
to the context of arbitrary $C_{0}$ operators. The result is that
a sufficient number of these divisibility relations can be obtained
from special invariant subspaces $\mathcal{M}$ of $T$. More precisely,
assume that $T|\mathcal{M}$ has cyclic multiplicity $r<\infty$,
set $\mathcal{M}'=\mathcal{M}\cap\mathcal{H}'$, $\mathcal{M}''=\overline{P_{\mathcal{H}''}\mathcal{M}}$,
and let
\[
\bigoplus_{n=1}^{r}S(\alpha_{n}),\quad\bigoplus_{n=1}^{r}S(\alpha'_{n}),\quad\bigoplus_{n=1}^{r}S(\alpha_{n}^{\prime\prime})
\]
be the Jordan models of $T|\mathcal{M}$, $T|\mathcal{M}'$, and $P_{\mathcal{M}''}T|\mathcal{M}''$,
respectively. It is known \cite{C0-book} that 
\[
\prod_{n=1}^{r}\alpha_{n}\equiv\prod_{n=1}^{r}(\alpha_{n}'\alpha_{n}^{\prime\prime}),
\]
where we use the notation $\varphi\equiv\psi$ to indicate that the
quotient of the inner functions $\varphi$ and $\psi$ is a constant
(necessarily of absolute value equal to $1$). We show that $\mathcal{M}$
can be chosen in such a way that $\prod_{n=1}^{r}\alpha_{n}$ is divisible
by a product of the form $\prod_{n=1}^{r}\theta_{i_{n}}$, while the
products $\prod_{n=1}^{r}\alpha'_{n},\prod_{n=1}^{r}\alpha_{n}^{\prime\prime}$
divide $\prod_{n=1}^{r}\theta'_{j_{n}},\prod_{n=1}^{r}\theta_{k_{n}}^{\prime\prime}$
respectively, thus establishing that $\prod_{n=1}^{r}\theta_{i_{n}}$
divides $\prod_{n=1}^{r}(\theta'_{j_{n}}\theta_{k_{n}}^{\prime\prime})$
for certain indices $\{k_{i},\ell_{i},m_{i}:i=1,2,\dots,r\}$. Moreover,
in case this last divisibility is an equality, we show that $\mathcal{M}$
is, in a weak sense, a reducing subspace for $T$, with similar statements
about $\mathcal{M}'$ and $\mathcal{M}''$. Two special cases of this
result were proved in \cite{extremal-structure}, but the methods
of that paper do not seem to extend beyond the two classes of inequalities
considered there. The existence of such (almost) reducing spaces is
analogous to the existence of common reducing spaces for selfadjoint
matrices whose sum saturates one of the Horn inequalities (see \cite{fulton-survey}).

When the space $\mathcal{H}$ is separable, the summands $S(\theta_{n})$
in the model of $T$ are constant for $n\ge\aleph_{0}$, which means
that $S(\theta_{n})$ acts on a space of dimension zero which can
then be omitted from the sum. One may ask what form the divisibility
relations take in the nonseparable case. The only additional relations
state that $\theta_{n}$ divides $\theta_{n}'\theta_{n}^{\prime\prime}$
when $n\ge\aleph_{0}$. Just as in the case of the Horn relations,
these divisibility relations can be obtained by exhibiting an invariant
subspace $\mathcal{M}$ for $T$ which, in this case, is reducing
in the usual sense for $T$ and for $P_{\mathcal{H}'}$.

The divisibility relations we consider were studied earlier when $\mathcal{H}$
is finite dimensional. We refor to \cite{thompson} for a survey of
these results. The Littlewood-Richardson rule in this context and,
indeed, in the case of finitely generated modules over a discrete
valuation ring, was first proved in \cite{klein-1} (see also \cite{macD}
for a different argument.) The basic ideas in this paper originated
in the study of singular numbers for products of operators \cite{sing-val}
and in the study of torsion modules over principal ideal domains \cite{BDL-Szeged}.
The techniques we use are necessarily different, and they may obscure
to some extent the essential simplicity of the arguments.

The remainder of this paper is organized as follows. Section \ref{sec:Preliminaries}
contains some preliminaries about the class $C_{0}$ and intersection
theory. In Section \ref{sec:Invariant-subspaces-vs-Schubert} we consider
special invariant subspaces for operators of class $C_{0}$ with finite
defect indices. In Section \ref{sec:The-Horn-inequalities} we prove
the Horn divisibility relations, first for contractions with finite
multiplicity and then in general. The relations pertaining to $\theta_{n}$
for $n\ge\aleph_{0}$ are established in Section \ref{sec:Operators-on-nonseparable}.
We conclude in Section \ref{sec:inverse problem} with a discussion
of the `inverse' problem: given Jordan operators
\[
J=\bigoplus_{1\le n<\aleph}S(\theta_{n}),\quad J'=\bigoplus_{1\le n<\aleph}S(\theta'_{n}),\quad J''=\bigoplus_{1\le n<\aleph}S(\theta_{n}^{\prime\prime}),
\]
do there exist a $C_{0}$ operator $T$ and an invariant subspace
$\mathcal{H}'$ for $T$ such that $T$, $T|\mathcal{H}'$, and $P_{\mathcal{H}^{\prime\perp}}T|\mathcal{H}^{\prime\perp}$
are quasisimilar to $J,$ $J'$, and $J''$, respectively?

\section{Preliminaries\label{sec:Preliminaries}}

Recall \cite{SN} that an operator $T$ acting on a complex Hilbert
space $\mathcal{H}$ is a \emph{contraction} if $\|T\|\le1$, and
it is a \emph{completely nonunitary contraction} if, in addition,
$T$ has no unitary restriction to any invariant subspace. Given a
completely nonunitary contraction $T$ on $\mathcal{H}$, the usual
polynomial calculus $p\mapsto p(T)$ extends to a functional calculus
(discovered by Sz.-Nagy and Foias) defined on the algebra $H^{\infty}$
of bounded analytic functions  in the unit disk $\mathbb{D}=\{\lambda\in\mathbb{C}:|\lambda|<1\}$.
The operator $T$ is of \emph{class} $C_{0}$ if the ideal $\mathcal{J}_{T}=\{u\in H^{\infty}:u(T)=0\}$
is not zero, in which case $\mathcal{J}_{T}$ is a principal ideal
generated by an inner function $m_{T}$, uniquely determined up to
a scalar factor and called the \emph{minimal function} of $T$.

The simplest operators of class $C_{0}$ are the \emph{Jordan blocks}.
Given an inner function $\theta\in H^{\infty}$, the Jordan block
$S(\theta)$ is obtained by compressing the unilateral shift $S$
on the Hardy space $H^{2}$ to its co-invariant subspace
\[
\mathcal{H}(\theta)=H^{2}\ominus\theta H^{2}.
\]
A \emph{Jordan operator} is, as already indicated in the introduction,
an operator of the form
\[
J=\bigoplus_{1\le n<\aleph}S(\theta_{n}),
\]
where $\aleph$ is a cardinal number (that is, the smallest ordinal
of some cardinality) and, for each ordinal $n$, $\theta_{n}$ is
an inner function such that $\theta_{n}|\theta_{m}$ whenever ${\rm card}(n)\ge{\rm card}(m)$.
(We write $\varphi|\psi$ to indicate that $\varphi$ is a divisor
of $\psi$ in $H^{\infty}$.) In particular, every $\theta_{n}$ divides
$\theta_{1}$ which is in fact the minimal function of $J$. Note
that, when $\theta$ is a constant inner function, we have $\mathcal{H}(\theta)=\{0\}$,
so summands with $\theta_{n}\equiv1$ do not contribute to the sum
defining a Jordan operator. One is tempted to write $J$ as a direct
sum extended over the entire class of ordinal numbers (with $\theta_{n}=1$
for sufficiently large $n$), but this does not seem wise if set theoretical
decorum is to be maintained.

The class $C_{0}$ is completely classified by the relation of quasisimilarity.
Two operators $T,T'$ acting on $\mathcal{H},\mathcal{H}'$, respectively,
are said to be \emph{quasisimilar} if there exist continuous linear
operators $X:\mathcal{H}\to\mathcal{H}'$ and $Y:\mathcal{H}'\to\mathcal{H}$
which are one-to-one, have dense ranges, and satisfy the following
intertwining relations
\[
XT=T'X,\quad TY=YT'.
\]
We write $T\sim T'$ to indicate that $T$ is quasisimilar to $T'$.
Quasisimliarity is a weaker relation than similarity, but it is just
right for the class $C_{0}$. The following result is \cite[Theorem III.5.23]{C0-book}.
\begin{thm}
\label{thm:Jordan-exists}The quasisimilarity equivalence class of
every operator of class $C_{0}$ contains a unique Jordan operator,
called the Jordan model of $T$.
\end{thm}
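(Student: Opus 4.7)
The plan is to establish existence and uniqueness separately, working directly from the Sz.-Nagy--Foias $H^\infty$ functional calculus that makes $\mathcal{H}$ into a module over $H^\infty$ via $(u,h) \mapsto u(T)h$.

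For uniqueness, I would identify a family of quasisimilarity invariants that pin down each inner function in a Jordan model. The starting point is that the minimal function $m_T$ is a quasisimilarity invariant (if $X T = T' X$ with $X$ injective and having dense range, then $u(T)=0 \iff u(T')=0$), so if $J = \bigoplus_n S(\theta_n)$ is quasisimilar to $T$ then $\theta_1 \equiv m_T$. To recover $\theta_n$ for $n \ge 2$, I would use the cardinal-valued invariants $\mu_T(u) = \dim\,\ker(u(T)^*)/\overline{\operatorname{ran}}(\cdots)$ (more precisely, the multiplicity function that for a Jordan operator $J$ reads off how many $\theta_k$ are not divided by $u$, for inner $u$). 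These invariants are preserved under quasisimilarity because a one-to-one intertwiner with dense range descends to an isomorphism on the appropriate defect quotients. For two Jordan operators $J = \bigoplus S(\theta_n)$ and $\widetilde J = \bigoplus S(\widetilde\theta_n)$, matching multiplicity functions force $\theta_n \equiv \widetilde\theta_n$ for every ordinal $n$ by induction on a total ordering of inner functions by divisibility.

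For existence, I would construct the Jordan model by transfinite induction. At stage $1$, set $\theta_1 = m_T$; the maximality of the minimal function guarantees a vector $h_1 \in \mathcal{H}$ whose cyclic invariant subspace $\mathcal{M}_1 = \overline{H^\infty(T)h_1}$ satisfies $T|\mathcal{M}_1 \sim S(m_T)$. The critical step is to produce a hyperinvariant (or at least invariant) quasi-complement $\mathcal{N}_1$ so that $\mathcal{M}_1 \dotplus \mathcal{N}_1$ is dense in $\mathcal{H}$ with $\mathcal{M}_1 \cap \mathcal{N}_1 = \{0\}$, and such that $T|\mathcal{N}_1$ is again of class $C_0$ with minimal function $\theta_2$ dividing $\theta_1$ and cyclic multiplicity strictly less than that of $T$. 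Iterating yields $T \sim \bigoplus_n S(\theta_n)$ with the required divisibility chain.

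The main obstacle is precisely this splitting-off step: showing that a cyclic invariant subspace whose restriction already carries the full minimal function admits a quasi-complement on which $T$ remains of class $C_0$. This is the analog of extracting an invariant factor in the classical elementary divisor theorem, but in the Hilbert space setting it requires a nontrivial construction, typically via dual bases and Carleson-type interpolation combined with the lifting theorem, to guarantee that a map defined on $\mathcal{M}_1$ extends to an intertwiner on $\mathcal{H}$ witnessing the splitting. A secondary difficulty is the nonseparable case, where the induction must be carried out over all ordinals below some $\aleph$; at limit ordinals one must verify that the direct sum constructed so far exhausts $\mathcal{H}$ in the quasisimilarity sense, which requires a cardinality argument comparing the defect spaces at each stage with the total cyclic multiplicity of $T$.
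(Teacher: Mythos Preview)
The paper does not prove this theorem at all: it is stated as a quotation of \cite[Theorem III.5.23]{C0-book} and used as a black box throughout. So there is no ``paper's own proof'' to compare against; your proposal goes well beyond what the paper attempts.

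That said, your outline is broadly faithful to how the result is actually established in the cited monograph. Uniqueness is indeed proved via quasisimilarity-invariant data attached to the inner functions (though the precise invariant used there is the greatest common inner divisor of the minimal functions of restrictions to invariant subspaces of given cyclic multiplicity, rather than a defect-dimension count; your $\mu_T(u)$ is a bit vague as written). Existence is indeed obtained by a transfinite recursion that peels off a maximal cyclic summand at each step, and you have correctly identified the genuine difficulty: producing an invariant quasi-complement to the cyclic space generated by a maximal vector. In the separable case this is the ``splitting-off'' lemma, and in the nonseparable case one also needs the orthogonality across limit ordinals (what the paper records as Theorem~\ref{thm:quasi-dir}). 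Your sketch is honest about both obstacles but does not resolve them, so it is a plan rather than a proof; filling in the splitting step is essentially the content of several sections of \cite{C0-book}.
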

Given an operator $T$ of class $C_{0}$, we may occasionally write
$\theta_{n}^{T}$ for the inner functions in its Jordan model. In
order to characterize these functions, we need the concept of \emph{cyclic
multiplicity} for an operator $T$. This is a cardinal number $\mu_{T}$
defined as the smallest cardinality of a subset $C\subset\mathcal{H}$
with the property that the invariant subspace for $T$ generated by
$C$ is the entire space $\mathcal{H}$:
\[
\mathcal{H}=\bigvee\{T^{k}h:h\in C,k\ge0\}.
\]

\begin{prop}
\label{prop:theta-from-multip}\emph{(\cite[Corollary III.3.25]{C0-book})}Given
an operator $T$ of class $C_{0}$ on $\mathcal{H}$, an inner function
$\varphi$, and an ordinal number $n\ge1$, the following statements
are equivalent:
\begin{enumerate}
\item $\theta_{n+1}^{T}|\varphi$.
\item The cyclic multiplicity of the restriction $T|\overline{\varphi(T)\mathcal{H}}$
is at most ${\rm card}(n)$.
\end{enumerate}
\end{prop}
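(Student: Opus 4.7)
The natural approach is to pass to the Jordan model and exploit the fact that cyclic multiplicity is a quasisimilarity invariant. If $J=\bigoplus_{1\le k<\aleph}S(\theta_{k}^{T})$ is the Jordan model of $T$ (Theorem~\ref{thm:Jordan-exists}), acting on $\mathcal{K}$, and $X:\mathcal{H}\to\mathcal{K}$ is an intertwining quasiaffinity, then $X\varphi(T)=\varphi(J)X$ together with the density of $X\mathcal{H}$ in $\mathcal{K}$ shows that $X$ carries $\overline{\varphi(T)\mathcal{H}}$ onto a dense subspace of $\overline{\varphi(J)\mathcal{K}}$; injectivity of $X$ then yields a quasisimilarity between $T|\overline{\varphi(T)\mathcal{H}}$ and $J|\overline{\varphi(J)\mathcal{K}}$, so it suffices to treat the case $T=J$.

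For each $k$, a standard computation gives
\[
\overline{\varphi(S(\theta_{k}^{T}))\mathcal{H}(\theta_{k}^{T})}=\alpha_{k}\mathcal{H}(\theta_{k}^{T}/\alpha_{k}),\qquad\alpha_{k}=\theta_{k}^{T}\wedge\varphi,
\]
which follows from $\varphi H^{2}+\theta_{k}^{T}H^{2}=\alpha_{k}H^{2}$ upon projecting onto $\mathcal{H}(\theta_{k}^{T})$. Multiplication by $\alpha_{k}$ identifies the restriction of $S(\theta_{k}^{T})$ to this subspace unitarily with $S(\theta_{k}^{T}/\alpha_{k})$. Summing over $k$ and discarding trivial summands, $J|\overline{\varphi(J)\mathcal{K}}$ is unitarily equivalent to a Jordan operator whose cyclic multiplicity equals $|I|$, where $I=\{k\ge1:\theta_{k}^{T}\nmid\varphi\}$.

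It then remains to check that $|I|\le{\rm card}(n)$ is equivalent to $\theta_{n+1}^{T}|\varphi$. If $\theta_{n+1}^{T}|\varphi$, then every $k$ with ${\rm card}(k)\ge{\rm card}(n+1)$ satisfies $\theta_{k}^{T}|\theta_{n+1}^{T}|\varphi$, so $I\subset\{k\ge1:{\rm card}(k)<{\rm card}(n+1)\}$, which has cardinality ${\rm card}(n)$ whether $n$ is finite or infinite. Conversely, if $\theta_{n+1}^{T}\nmid\varphi$, then $\theta_{n+1}^{T}|\theta_{k}^{T}$ forces $\theta_{k}^{T}\nmid\varphi$ for every $k$ with ${\rm card}(k)\le{\rm card}(n+1)$; this set has cardinality strictly larger than ${\rm card}(n)$ in both cases. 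The main subtlety lies in this cardinality count: in the transfinite case one must observe that ${\rm card}(n+1)={\rm card}(n)$, so that the relevant gap is the successor cardinal $({\rm card}(n))^{+}$, whereas in the finite case it reduces to the elementary inequality $n<n+1$.
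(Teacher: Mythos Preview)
The paper does not actually prove this proposition; it is quoted verbatim from \cite[Corollary III.3.25]{C0-book} as background, so there is no argument in the paper to compare against.

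Your argument is the standard one and is essentially correct, but two steps deserve tightening. First, the equality $\varphi H^{2}+\theta_{k}^{T}H^{2}=\alpha_{k}H^{2}$ holds only after taking closures in general (two relatively prime inner functions need not satisfy a corona-type lower bound, so the algebraic sum can be a proper dense subspace of $H^{2}$); since you only use the closed span, this does not affect the conclusion, but the justification should read $\overline{\varphi H^{2}+\theta_{k}^{T}H^{2}}=\alpha_{k}H^{2}$. Second, you assert that $\bigoplus_{k\in I}S(\theta_{k}^{T}/\alpha_{k})$ is already a Jordan operator, and hence that its cyclic multiplicity is $|I|$. This requires the divisibility $\theta_{k'}^{T}/\alpha_{k'}\mid\theta_{k}^{T}/\alpha_{k}$ whenever ${\rm card}(k')\ge{\rm card}(k)$, which is not automatic from $\theta_{k'}^{T}\mid\theta_{k}^{T}$ and $\alpha_{k'}\mid\alpha_{k}$ separately. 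It is, however, a short lattice computation: if $\psi'\mid\psi$ are inner then $\psi'/(\psi'\wedge\varphi)$ divides $\psi/(\psi\wedge\varphi)$, as one checks valuation by valuation on the Blaschke and singular parts. With that in hand, the identification of the cyclic multiplicity of a Jordan operator with the number of its nontrivial summands is standard, and your cardinality count in the last paragraph is correct.
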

Note again the shift $n\mapsto n+1$ compared to the corresponding
statement in \cite{C0-book}. This is immaterial for transfinite $n$
for which we have $\theta_{n+1}^{T}\equiv\theta_{n}^{T}$ as well
as ${\rm card}(n+1)={\rm card}(n)$. One way to use this result is
as follows. If $\theta_{n}^{T}\not\equiv1$ for some finite $n$,
then $T$ has a restriction to some invariant subspace which is quasisimilar
to the direct sum of $n$ copies of $S(\varphi)$ for any nonconstant
inner divisor $\varphi$ of $\theta_{n}^{T}$. This follows from the
fact that $S(\varphi)$ is unitarily equivalent to the restriction
of $S(\psi)$ to some invariant subspace provided that $\varphi|\psi$.
The (unique) invariant subspace in question is $(\psi/\varphi)H^{2}\ominus\psi H^{2}$.
If $\theta_{n}^{T}\not\equiv1$ for some transfinite $n$, then $T$
has a restriction which is quasisimilar to the direct sum of $m$
copies of $S(\varphi)$, $\varphi|\theta_{n}^{T}$, where $m$ is
the smallest cardinal \emph{greater than $n$. }Indeed $m$ is precisely
the cardinality of the set of ordinals $\{n':{\rm card}(n')={\rm card}(n)\}$.

Another way to describe the structure of an operator in terms of its
Jordan model is to use \emph{quasidirect} decompositions of the Hilbert
space. Let $\{\mathcal{H}_{i}\}_{i\in I}$ be a collection of closed
subspaces of the Hilbert space $\mathcal{H}$. We say that $\mathcal{H}$
is the \emph{quasidirect sum} of the spaces $\{\mathcal{H}_{i}\}_{i\in I}$
if 
\[
\mathcal{H}=\bigvee_{i\in I}\mathcal{H}_{i}
\]
 and, for each $j\in I$, we have
\[
\mathcal{H}_{j}\cap\left[\bigvee_{i\ne j}\mathcal{H}_{i}\right]=\{0\}.
\]
Given a quasidirect decomposition $\mathcal{H}=\mathcal{M}\vee\mathcal{N}$,
we say that $\mathcal{N}$ is a \emph{quasidirect complement} of $\mathcal{M}$.
\begin{thm}
\label{thm:quasi-dir}\emph{(\cite[Theorem III.6.10]{C0-book})}Let
$T$ be an operator of class $C_{0}$ on $\mathcal{H}$. For each
ordinal $n\ge1$, there exists an invariant subspace $\mathcal{H}_{n}\subset\mathcal{H}$
such that $T|\mathcal{H}_{n}$ is quasisimilar to $S(\theta_{n}^{T})$,
and $\mathcal{H}$ is the quasidirect sum of the spaces $\{\mathcal{H}_{n}\}$.
In addition, $\mathcal{H}_{n+i}\perp\mathcal{H}_{m+j}$ if $n$ and
$m$ are distinct limit ordinals and $i,j<\aleph_{0}$.
\end{thm}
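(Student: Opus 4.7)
My plan is to transport the direct sum decomposition of the Jordan model back to $\mathcal{H}$ through a quasiaffinity, and then refine the construction to achieve the required orthogonality. By Theorem~\ref{thm:Jordan-exists}, fix quasiaffinities $X:\mathcal{H}\to\mathcal{H}_J$ and $Y:\mathcal{H}_J\to\mathcal{H}$ intertwining $T$ with its Jordan model $J=\bigoplus_n S(\theta_n^T)$, and write $\mathcal{H}_J=\bigoplus_n\mathcal{K}_n$ with $\mathcal{K}_n=\mathcal{H}(\theta_n^T)$. The natural candidates are $\mathcal{H}_n=\overline{Y(\mathcal{K}_n)}$, which are automatically $T$-invariant, and $Y|\mathcal{K}_n$ is then an injective intertwiner of $S(\theta_n^T)$ into $T|\mathcal{H}_n$ with dense range.

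For the reverse quasiaffinity, let $P_n$ denote the orthogonal projection onto $\mathcal{K}_n$ and consider $P_nX|\mathcal{H}_n:\mathcal{H}_n\to\mathcal{K}_n$, which intertwines $T|\mathcal{H}_n$ with $S(\theta_n^T)$. Density of the range follows because $XY$ commutes with $J$ and is a quasiaffinity, so its compression $P_n(XY)|\mathcal{K}_n$ is a quasiaffinity of $S(\theta_n^T)$; injectivity can be deduced by a dual argument using $YX$ and the commutant structure of $J$. Granting these, $T|\mathcal{H}_n\sim S(\theta_n^T)$. The quasidirect sum property then comes for free: $\bigvee_n\mathcal{H}_n=\overline{Y(\mathcal{H}_J)}=\mathcal{H}$ since $Y$ has dense range, while transversality holds because $P_jX$ annihilates $\bigvee_{i\neq j}\mathcal{H}_i$ yet is a quasiaffinity on $\mathcal{H}_j$.

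The main obstacle is the orthogonality claim, since the $\mathcal{H}_n$ constructed as above need not be orthogonal across distinct limit-ordinal groups. To remedy this, I would first refine the model decomposition. Within each cardinality stratum $\aleph$, the relevant $\theta_n^T$ are all equal to a common function $\theta^{(\aleph)}$, and the model restricted to this stratum is an $\aleph$-fold Hilbert sum of copies of $S(\theta^{(\aleph)})$. Partitioning the index set $\{n:\operatorname{card}(n)=\aleph\}$ into chunks of length $\omega$ indexed by limit ordinals, one rewrites $J$ as an \emph{orthogonal} sum $J=\bigoplus_\alpha J_\alpha$ in which each $J_\alpha$ has countable multiplicity. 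The crucial step is to show that $T$ itself admits a corresponding orthogonal decomposition $\mathcal{H}=\bigoplus_\alpha\mathcal{L}_\alpha$ with $T|\mathcal{L}_\alpha\sim J_\alpha$; this amounts to the assertion that a uniform-multiplicity $C_0$ operator splits orthogonally into countable-multiplicity pieces. I would attempt this by transfinite induction, using Proposition~\ref{prop:theta-from-multip} to peel off at each step an invariant subspace on which $T$ has multiplicity $\aleph_0$, and arguing that successive such subspaces can be chosen mutually orthogonal. Once such a decomposition is in hand, applying the first two stages to each $T|\mathcal{L}_\alpha$ yields the desired $\mathcal{H}_n$ inside $\mathcal{L}_\alpha$, and the inter-group orthogonality is inherited from that of the $\mathcal{L}_\alpha$.
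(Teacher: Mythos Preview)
The paper does not prove this theorem; it is quoted from \cite[Theorem~III.6.10]{C0-book} without argument, so there is no proof in the paper to compare against. That said, your sketch contains a genuine gap that would cause the argument to fail as written.

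The problematic step is the transversality claim. You assert that $P_jX$ annihilates $\bigvee_{i\ne j}\mathcal{H}_i$, where $\mathcal{H}_i=\overline{Y\mathcal{K}_i}$. This would require $P_j(XY)\mathcal{K}_i=0$ for $i\ne j$, i.e.\ that the quasiaffinity $XY\in\{J\}'$ be block-diagonal relative to the decomposition $\mathcal{H}_J=\bigoplus_n\mathcal{K}_n$. But the commutant of a Jordan operator is \emph{not} diagonal: an operator $A\in\{J\}'$ has matrix entries $A_{mn}:\mathcal{H}(\theta_n)\to\mathcal{H}(\theta_m)$ that intertwine $S(\theta_n)$ and $S(\theta_m)$, and these are generically nonzero whenever $\theta_m\wedge\theta_n\not\equiv 1$. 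So $P_jXY|\mathcal{K}_i$ need not vanish, and your transversality argument collapses. The same issue undermines the claim that $P_jX|\mathcal{H}_j$ is injective, since that too rested on the diagonal structure of $XY$. Simply pushing the model decomposition forward through an arbitrary quasiaffinity does not produce a quasidirect sum; the proof in \cite{C0-book} instead builds the $\mathcal{H}_n$ inductively by successively choosing maximal vectors and using the splitting property (essentially Proposition~\ref{prop:completion-of-basis}) at each stage.

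Your treatment of the orthogonality across limit-ordinal blocks identifies the right difficulty---one must show that $T$ itself decomposes \emph{orthogonally} into pieces of countable multiplicity---but the proposed transfinite induction does not address why the subspaces peeled off can be taken \emph{reducing} rather than merely invariant. For $C_0$ operators this is a nontrivial structural fact about nonseparable spaces, and Proposition~\ref{prop:theta-from-multip} alone does not supply it.
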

We note again that we do not run into any set theory difficulties
because $\mathcal{H}_{n}=\{0\}$ for sufficiently large $n$. Since
$T|\mathcal{H}_{n}$ has cyclic multiplicity equal to one when $\mathcal{H}_{n}\ne\{0\}$,
we can select for each $n$ a vector $x_{n}\in\mathcal{H}_{n}$ which
is cyclic for $T|\mathcal{H}_{n}$. A collection $\{x_{n}\}$ obtained
this way will be called a $C_{0}$-\emph{basis} for the operator $T$.
The vector $x_{1}$ is also known as a \emph{maximal vector }for $T$.
Just like a linearly independent set in a vector space can be completed
to a basis, a partial $C_{0}$-basis can be completed to a $C_{0}$-basis.
We formulate the result for the case of finite multiplicity, in an
essentially equivalent form. The proof is contained in \emph{\cite[Theorem III.6.10]{C0-book}.
}The last statement follows from the main result of\emph{ \cite{three-test-pro}.}
\begin{prop}
\label{prop:completion-of-basis}Let $T$ be an operator of class
$C_{0}$ on $\mathcal{H}$. Assume that $k$ is a positive integer,
and that $\mathcal{M\subset\mathcal{H}}$ is an invariant subspace
for $T$ such that $T|\mathcal{M}$ is quasisimilar to 
\[
\bigoplus_{n=1}^{k}S(\theta_{n}^{T}).
\]
Then $\mathcal{M}$ has a $T$-invariant quasidirect complement. If
$\mathcal{N}$ is such a complement of $\mathcal{M}$, the Jordan
model of $T|\mathcal{N}$ is
\[
\bigoplus_{n\ge1}S(\theta_{n+k}^{T}).
\]
If two $T$-invariant subspaces $\mathcal{M},\mathcal{N}$ are quasidirect
complements of each other and if $T|\mathcal{M}$ is quasisimilar
to 
\[
\bigoplus_{j=1}^{k}S(\theta_{n_{j}}^{T}),
\]
where $1\le n_{1}<n_{2}<\cdots<n_{k}<\aleph_{0}$, then $T|\mathcal{N}$
is quasisimilar to
\[
\bigoplus_{n\notin\{n_{1},n_{2},\dots,n_{k}\}}S(\theta_{n}^{T}).
\]

\end{prop}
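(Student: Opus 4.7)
The plan is to combine the quasidirect decomposition of Theorem~\ref{thm:quasi-dir}, uniqueness of the Jordan model from Theorem~\ref{thm:Jordan-exists}, and the cyclic multiplicity criterion of Proposition~\ref{prop:theta-from-multip}.

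First I would record the baseline case. Applying Theorem~\ref{thm:quasi-dir}, write $\mathcal{H}=\bigvee_{n\ge 1}\mathcal{H}_n$ with $T|\mathcal{H}_n\sim S(\theta_n^T)$, and set $\mathcal{M}_0=\bigvee_{n=1}^{k}\mathcal{H}_n$, $\mathcal{N}_0=\bigvee_{n>k}\mathcal{H}_n$. These are $T$-invariant quasidirect complements of each other, and by uniqueness of the Jordan model applied to the full quasidirect decomposition, $T|\mathcal{M}_0$ and $T|\mathcal{N}_0$ have Jordan models $\bigoplus_{n=1}^{k}S(\theta_n^T)$ and $\bigoplus_{n\ge 1}S(\theta_{n+k}^T)$ respectively. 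This disposes of the proposition in the special case $\mathcal{M}=\mathcal{M}_0$.

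For a general $\mathcal{M}$ with $T|\mathcal{M}\sim\bigoplus_{n=1}^{k}S(\theta_n^T)$, the strategy is to reduce to the baseline. Since $T|\mathcal{M}\sim T|\mathcal{M}_0$ and both embed as invariant subspaces of $T$, the splitting lemma that is the core of \cite[Theorem III.6.10]{C0-book} produces a $T$-invariant quasidirect complement $\mathcal{N}$ of $\mathcal{M}$; concretely one lifts a $C_0$-basis of $T|\mathcal{M}_0$ to a $C_0$-basis of $T|\mathcal{M}$ and uses the remaining members of a $C_0$-basis of $T$ to generate $\mathcal{N}$. Once $\mathcal{N}$ is in hand, its Jordan model $J$ is forced: applying uniqueness of the Jordan model to the decomposition $\mathcal{H}=\mathcal{M}\vee\mathcal{N}$ gives $\bigoplus_{n\ge 1}S(\theta_n^T)\sim\bigl(\bigoplus_{n=1}^{k}S(\theta_n^T)\bigr)\oplus J$, so $J=\bigoplus_{n\ge 1}S(\theta_{n+k}^T)$.

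The hard part is the last assertion, where $\mathcal{M}$ realizes an arbitrary subset of blocks indexed by $n_1<\cdots<n_k$ and we must identify the Jordan model of a \emph{given} quasidirect complement $\mathcal{N}$ with the complementary set of indices. The basis-extension idea above does not apply directly, since the indices no longer form an initial segment and the divisibility chain $\theta_{n+1}^T|\theta_n^T$ couples blocks across the subset. Here the plan is to invoke the main result of \cite{three-test-pro}, which verifies the Jordan model of a $C_0$ operator by checking cyclic multiplicities of restrictions $T|\overline{\varphi(T)\mathcal{N}}$ against a small, explicitly described family of test functions $\varphi$. One direction of each such test bound would come from $\mathcal{M}\cap\mathcal{N}=\{0\}$ together with the known action of $\varphi(T)$ on $\mathcal{M}$; the other from $\mathcal{H}=\mathcal{M}\vee\mathcal{N}$ combined with the multiplicities of $T|\overline{\varphi(T)\mathcal{H}}$ supplied by Proposition~\ref{prop:theta-from-multip}. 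Matching the two bounds would yield $T|\mathcal{N}\sim\bigoplus_{n\notin\{n_1,\dots,n_k\}}S(\theta_n^T)$. I expect this multiplicity bookkeeping against the three test functions to be the only substantive step, with the rest being soft quasidirect-sum arithmetic.
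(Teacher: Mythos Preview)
Your approach is exactly the paper's: it too simply defers the first two assertions to \cite[Theorem III.6.10]{C0-book} and the last to the main result of \cite{three-test-pro}, offering no further argument. One small caveat on your sketch: the inference ``so $J=\bigoplus_{n\ge1}S(\theta_{n+k}^T)$'' is not bare Jordan-model uniqueness (cancellation in direct sums of $C_0$ operators requires justification), but this is precisely what the cited theorem supplies, and in any case your \cite{three-test-pro} argument for the general index set $\{n_1,\dots,n_k\}$ specializes to the initial segment $\{1,\dots,k\}$ and covers it.
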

In the case of operators of finite multiplicity it is somewhat easier
to verify that a system of vectors is a $C_{0}$-basis.
\begin{prop}
\label{prop:basis when multiplicity is finite}Let $T$ be an operator
of class $C_{0}$ on $\mathcal{H}$ and let $\bigoplus_{n=1}^{N}S(\theta_{n})$
be the Jordan model of $T$. Assume that the vectors $\{h_{n}\}_{n=1}^{N}\subset\mathcal{H}$
satisfy the following properties:
\begin{enumerate}
\item The smallest invariant subspace for $T$ containing $\{h_{n}\}_{n=1}^{N}$
is $\mathcal{H}$.
\item $\theta_{n}(T)h_{n}=0$. 
\end{enumerate}

Then $\{h_{n}\}_{n=1}^{N}$ is a $C_{0}$-basis for $T$.

\end{prop}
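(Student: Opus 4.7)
The plan is to manufacture an explicit intertwining quasi-affinity from the Jordan model $J := \bigoplus_{n=1}^{N} S(\theta_{n})$ of $T$ onto $T$ using the vectors $\{h_{n}\}$, and then to read off the desired quasidirect decomposition from it. First, by hypothesis (2), the standard construction applied to each $h_{n} \in \ker \theta_{n}(T)$ produces a bounded operator $\Phi_{n}\colon \mathcal{H}(\theta_{n}) \to \mathcal{H}$ intertwining $S(\theta_{n})$ with $T$ and sending a distinguished cyclic vector of $\mathcal{H}(\theta_{n})$ to $h_{n}$ (cf.\ \cite{C0-book}); setting $\Phi := \bigoplus_{n} \Phi_{n}$, this intertwines $J$ with $T$, and by hypothesis (1) the range of $\Phi$ contains every $T$-orbit of the $h_{n}$'s and is therefore dense in $\mathcal{H}$.

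The next step is to upgrade $\Phi$ to a quasi-affinity. Since $J$ is the Jordan model of $T$, the two are quasisimilar; pick any quasi-affinity $Z\colon T \to J$. Then $Z\Phi$ lies in the commutant of $J$ and has dense range, so it is injective by the standard fact that a dense-range commutant element of a $C_{0}$ operator is a quasi-affinity (cf.\ \cite{C0-book}). Hence $\Phi$ itself is injective. Setting $\mathcal{H}_{n} := \overline{\Phi_{n}(\mathcal{H}(\theta_{n}))} = \bigvee_{k \ge 0} T^{k} h_{n}$, the restriction $\Phi_{n}\colon \mathcal{H}(\theta_{n}) \to \mathcal{H}_{n}$ is a quasi-affinity intertwining $S(\theta_{n})$ with $T|\mathcal{H}_{n}$, so $T|\mathcal{H}_{n} \sim S(\theta_{n})$ with $h_{n}$ cyclic.

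To obtain the quasidirect property $\mathcal{H}_{j} \cap \bigvee_{i \ne j}\mathcal{H}_{i} = \{0\}$, fix $j$ and set $\mathcal{K}_{j} := \bigvee_{i \ne j}\mathcal{H}_{i}$. The same construction applied to $\{h_{i}\}_{i \ne j}$ yields a dense-range intertwiner $\bigoplus_{i \ne j} S(\theta_{i}) \to T|\mathcal{K}_{j}$; combined with the standard restriction divisibility $\theta_{l}^{T|\mathcal{K}_{j}} \mid \theta_{l}^{T}$ and with the product comparison arising from the dense-range intertwiner $T|\mathcal{H}_{j} \oplus T|\mathcal{K}_{j} \to T$ furnished by $\mathcal{H} = \mathcal{H}_{j} \vee \mathcal{K}_{j}$, one deduces that the Jordan model of $T|\mathcal{K}_{j}$ is exactly $\bigoplus_{i \ne j} S(\theta_{i})$. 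Then the addition map $\Psi_{j}\colon \mathcal{H}_{j} \oplus \mathcal{K}_{j} \to \mathcal{H}$, $(x,y) \mapsto x+y$, intertwines $S(\theta_{j}) \oplus (T|\mathcal{K}_{j})$ --- whose Jordan model is $J$ --- with $T$, has dense range, and is therefore a quasi-affinity by the argument of paragraph 2; its injectivity is exactly the statement $\mathcal{H}_{j} \cap \mathcal{K}_{j} = \{0\}$. The main obstacle is this last product-and-divisibility bookkeeping: termwise divisibility alone does not pin down the Jordan model of $T|\mathcal{K}_{j}$, and one must exploit the finite multiplicity $N$ together with the elementary fact that, if $m_{n} \mid \theta_{n}$ and $\prod_{n} m_{n} \equiv \prod_{n} \theta_{n}$, then $m_{n} \equiv \theta_{n}$ for every $n$, in order to promote divisibility to equality.
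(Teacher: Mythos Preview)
Your proof is correct and follows essentially the same route as the paper's. Both arguments build an intertwiner from the Jordan model $J=\bigoplus_{n=1}^N S(\theta_n)$ into $T$ using the vectors $h_n$, observe it has dense range by hypothesis (1), and then invoke the determinant/product machinery for finite-multiplicity $C_0$ operators (what the paper cites as \cite[Theorem~VI.3.16]{C0-book}) to conclude it is injective. The only real difference is in how the quasidirect property is extracted: the paper simply notes that the resulting quasi-affinity implements a lattice isomorphism $\mathcal{M}\mapsto\overline{X\mathcal{M}}$ and reads off the decomposition in one line, whereas you re-derive that particular consequence by hand, computing the Jordan model of $T|\mathcal{K}_j$ via product comparison and then running the dense-range-implies-injective argument again on $\Psi_j$. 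Your route is longer but uses nothing beyond what the paper already uses.

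Two small remarks. First, the ``standard fact that a dense-range commutant element of a $C_0$ operator is a quasi-affinity'' is \emph{not} valid for arbitrary $C_0$ operators (for instance, the backward shift on $\bigoplus_{n=1}^{\infty}\mathcal{H}(\theta)$ commutes with $\bigoplus_{n=1}^{\infty}S(\theta)$, is surjective, and has nontrivial kernel); it does hold for $C_0$ operators of finite multiplicity, which is the only case you need. Second, when you invoke ``the argument of paragraph~2'' for $\Psi_j$, the source operator is $(T|\mathcal{H}_j)\oplus(T|\mathcal{K}_j)$ rather than $J$ itself, so you are implicitly using that this property (dense range $\Rightarrow$ injective in the commutant) passes to operators quasisimilar to $J$; this is true, again by the determinant identity $d_T\equiv d_{T'}d_{T''}$ for a triangulation, but it is worth making explicit.
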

\begin{proof}
Denote by $\mathcal{H}_{n}$ the cyclic space for $T$ generated by
$h_{n}$. Condition (2) implies that $T|\mathcal{H}_{n}\sim S(\varphi_{n})$
for some inner divisor $\varphi_{n}$ of $\theta_{n}$. Choose injective
operators with dense ranges $X_{n}:\mathcal{H}(\varphi_{n})\to\mathcal{H}_{n}$
such that 
\[
(T|\mathcal{H}_{n})X_{n}=X_{n}S(\varphi_{n}),\quad n=1,2,\dots,N,
\]
and define $X:\bigoplus_{n=1}^{N}\mathcal{H}(\varphi_{n})\to\mathcal{H}$
by
\[
X(u_{1}\oplus u_{2}\oplus\cdots\oplus u_{N})=\sum_{n=1}^{N}X_{n}u_{n},\quad u_{1}\oplus u_{2}\oplus\cdots\oplus u_{N}\in\bigoplus_{n=1}^{N}\mathcal{H}(\varphi_{n}).
\]
Then $X$ has dense range, and \cite[Theorem VI.3.16]{C0-book} implies
that $\prod_{n=1}^{N}\theta_{n}$ divides $\prod_{n=1}^{N}\varphi_{n}$.
It follows that $\varphi_{n}\equiv\theta_{n}$, $n=1,2,\dots,N$,
and a second application of \cite[Theorem VI.3.16]{C0-book} implies
that $X$ is one-to-one as well. In fact, $X$ implements an isomorphism
between the lattices of invariant subspaces of $\bigoplus_{n=1}^{N}S(\varphi_{n})=\bigoplus_{n=1}^{N}S(\theta_{n})$
and $T$ via the map $\mathcal{M}\mapsto\overline{X\mathcal{M}}$,
and this implies immediately the conclusion of the proposition.
\end{proof}
There is a natural way to transform a $C_{0}$-basis into another.
We recall that the algebraic adjoint (or cofactor matrix) of an $N\times N$
matrix $A$ is a matrix $A^{{\rm Ad}}$ such that
\[
AA^{{\rm Ad}}=A^{{\rm Ad}}A=\det(A)I_{N},
\]
where $I_{N}$ denotes the identity matrix of size $N$. Given functions
$u,v\in H^{\infty}$, at least one of which is nonzero, we denote
by $u\wedge v$ their greatest common inner divisor.
\begin{cor}
\label{cor:base change}Let $T$ be an operator of class $C_{0}$
on $\mathcal{H}$ with Jordan model $\bigoplus_{n=1}^{N}S(\theta_{n})$,
and let $\{h_{n}\}_{n=1}^{N}\subset\mathcal{H}$ be a $C_{0}$-basis
for $T$. Consider functions $\{u_{nm}\}_{n,m=1}^{N}\subset H^{\infty}$
with the following properties:
\begin{enumerate}
\item $\theta_{1}\wedge\det[u_{nm}]_{n,m=1}^{N}\equiv1$.
\item $\theta_{m}/\theta_{n}$ divides $u_{nm}$ if $n>m$.
\end{enumerate}

Then the vectors 
\[
h'_{n}=\sum_{m=1}^{N}u_{nm}(T)h_{m},\quad n=1,2,\dots,N,
\]
also form a $C_{0}$-basis for $T$.

\end{cor}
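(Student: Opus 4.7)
I would prove this by applying Proposition~\ref{prop:basis when multiplicity is finite} to the vectors $\{h'_n\}_{n=1}^{N}$, which reduces the task to verifying (i) that $\theta_n(T)h'_n=0$ for every $n$, and (ii) that the smallest $T$-invariant subspace $\mathcal{K}$ containing $\{h'_n\}_{n=1}^{N}$ equals $\mathcal{H}$.

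The annihilation property (i) is a direct consequence of hypothesis~(2). Expanding gives
\[
\theta_n(T)h'_n=\sum_{m=1}^{N}(\theta_n u_{nm})(T)h_m,
\]
and since $\theta_m(T)h_m=0$, the $m$-th term vanishes whenever $\theta_m$ divides $\theta_n u_{nm}$. For $m\ge n$ one has $\theta_m\mid\theta_n$, so the required divisibility is automatic. For $m<n$ one has $\theta_n\mid\theta_m$, and $\theta_m\mid\theta_n u_{nm}$ reduces to $(\theta_m/\theta_n)\mid u_{nm}$, which is precisely hypothesis~(2).

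For the generating property (ii), I would invoke the classical cofactor identity. Because $u\mapsto u(T)$ is a unital homomorphism from the commutative algebra $H^{\infty}$, the identity $U^{{\rm Ad}}U=\det(U)\,I_N$ survives under entrywise application of the functional calculus. Reading off the $m$-th row of $U^{{\rm Ad}}(T)\,\mathbf{h}'=d(T)\,\mathbf{h}$ yields
\[
d(T)h_m=\sum_{k=1}^{N}(U^{{\rm Ad}})_{mk}(T)\,h'_k,\qquad d:=\det[u_{nm}]_{n,m=1}^{N}.
\]
For any $f\in H^{\infty}$ and $v\in\mathcal{H}$, $f(T)v$ lies in the smallest $T$-invariant subspace containing $v$, so each summand on the right lies in $\mathcal{K}$, whence $d(T)h_m\in\mathcal{K}$ for every $m$. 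Since $\{h_m\}$ already generates $\mathcal{H}$ as a $T$-invariant subspace and $d(T)$ commutes with $T$, a straightforward density argument gives $\overline{d(T)\mathcal{H}}\subseteq\mathcal{K}$.

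To finish, I would consider the compression $T''=P_{\mathcal{K}^{\perp}}T|\mathcal{K}^{\perp}$, which is again of class $C_0$ and whose minimal function $m_{T''}$ divides $m_T=\theta_1$. The compression formula for semi-invariant subspaces gives $d(T'')=P_{\mathcal{K}^{\perp}}d(T)|\mathcal{K}^{\perp}$, and because $d(T)\mathcal{H}\subseteq\mathcal{K}$, this operator vanishes, so $m_{T''}$ also divides $d$. Hypothesis~(1) then forces $m_{T''}$ to divide $\theta_1\wedge d\equiv1$, so $m_{T''}$ is constant, which is only possible when $\mathcal{K}^{\perp}=\{0\}$. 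Hence $\mathcal{K}=\mathcal{H}$, and Proposition~\ref{prop:basis when multiplicity is finite} delivers the conclusion. The main obstacle, in my view, is this last step: translating the arithmetic coprimality $\theta_1\wedge d\equiv1$ into the geometric conclusion $\mathcal{K}=\mathcal{H}$ requires both the transfer of the cofactor identity to the operator setting and the compression formula for semi-invariant subspaces.
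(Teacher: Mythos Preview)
Your proof is correct and follows essentially the same approach as the paper: verify the two hypotheses of Proposition~\ref{prop:basis when multiplicity is finite}, using condition~(2) for the annihilation property and the cofactor identity together with condition~(1) for the generating property. The only difference is in the last step, and it is cosmetic: where you pass to the compression $T''$ on $\mathcal{K}^{\perp}$ and argue that $m_{T''}\mid\theta_1\wedge d\equiv 1$, the paper simply invokes the standard fact that $d\wedge\theta_1\equiv 1$ forces $d(T)$ to have dense range, giving $\mathcal{K}\supset\overline{d(T)\mathcal{H}}=\mathcal{H}$ directly.
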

\begin{proof}
Denote by $[v_{nm}]_{n,m=1}^{N}$ the algebraic adjoint of the matrix
$[u_{nm}]_{n,m=1}^{N}$, and set $g=\det[u_{nm}]_{n,m=1}^{N}$ so
that
\[
\sum_{j=1}^{N}v_{nm}u_{mk}=g\delta_{ik},\quad i,k=1,2,\dots,N.
\]
We have
\[
\sum_{i=1}^{N}v_{nm}(T)h'_{m}=g(T)h_{n},\quad n=1,2,\dots,N,
\]
so that the invariant subspace for $T$ generated by $\{h'_{n}\}_{n=1}^{N}$
contains $g(T)\mathcal{H}$. Condition (1) implies that $g(T)$ has
dense range, and therefore this invariant subspace is $\mathcal{H}$.
The corollary follows from Proposition \ref{prop:basis when multiplicity is finite}
once we show that $\theta_{m}(T)h_{m}'=0$. Indeed, since $\theta_{k}|\theta_{m}$
for $k>m$,
\[
\theta_{m}(T)h'_{m}=\sum_{n=1}^{m-1}u_{mn}(T)\theta_{m}(T)h_{n}.
\]
Condition (2) implies that $\theta_{n}|u_{mn}\theta_{m}$ and therefore
all the terms in the sum above vanish.
\end{proof}
There is yet another quasidirect decomposition which serves as a substitute
for the primary decomposition of torsion modules over a principal
ideal domain. The following result is easily obtained from \cite[Theorem II.4.6]{C0-book}.
\begin{prop}
\label{prop:primary-decomposition}Consider an operator $T$ of class
$C_{0}$ on a Hilbert space $\mathcal{H}$. Assume that the minimal
function $m_{T}$ is factored as a product
\[
m_{T}=\gamma_{1}\gamma_{2}\cdots\gamma_{n}
\]
of inner functions such that $\gamma_{i}\wedge\gamma_{j}\equiv1$
for $i\ne j$, and set $\Gamma_{j}=m_{T}/\gamma_{j}$ for $j=1,2,\dots,n$.
Then $\mathcal{H}$ is the quasidirect sum of the invariant subspaces
\[
\mathcal{H}_{j}=\overline{\Gamma_{j}(T)\mathcal{H}},\quad j=1,2,\dots,n.
\]
If $\mathcal{M}$ is an invariant subspace for $T$, we have
\[
\overline{\Gamma_{j}(T)\mathcal{M}}=\mathcal{M}\cap\overline{\Gamma_{j}(T)\mathcal{H}},\quad j=1,2,\dots,n,
\]
and $\mathcal{M}$ is the quasidirect sum of the spaces $\mathcal{M}_{j}=\overline{\Gamma_{j}(T)\mathcal{M}}$.
Moreover, $\mathcal{M}$ has an invariant quasidirect complement in
$\mathcal{H}$ if and only if each $\mathcal{M}_{j}$ has an invariant
quasidirect complement in $\mathcal{H}_{j}$.
\end{prop}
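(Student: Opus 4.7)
My plan is to derive each assertion from the classical primary decomposition, Theorem II.4.6 of \cite{C0-book}, using the kernel identification $\overline{\Gamma_{j}(T)\mathcal{H}}=\ker\gamma_{j}(T)$. This identity holds for any $C_{0}$ operator and any coprime factorization $m_{T}=\gamma_{j}\Gamma_{j}$: the inclusion $\subseteq$ is immediate from $\gamma_{j}\Gamma_{j}=m_{T}$, while the reverse follows because the $T$-invariant subspace $\ker\gamma_{j}(T)$ carries a restriction of $T$ with minimal function dividing $\gamma_{j}$, so $\Gamma_{j}(T)$ acts on it as a quasi-affinity and in particular with dense range.

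For the first claim, Theorem II.4.6 of \cite{C0-book}, possibly iterated on the two-factor primary decomposition, yields $\mathcal{H}=\bigvee_{j}\mathcal{H}_{j}$. The quasidirect property reduces to the kernel description just established: $\mathcal{H}_{j}=\ker\gamma_{j}(T)$, while $\bigvee_{i\ne j}\mathcal{H}_{i}\subseteq\ker\Gamma_{j}(T)$ because each $\mathcal{H}_{i}\subseteq\ker\gamma_{i}(T)$ and $\gamma_{i}\mid\Gamma_{j}$ for $i\ne j$. A vector in the intersection has inner annihilator dividing $\gamma_{j}\wedge\Gamma_{j}\equiv1$, hence vanishes.

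For the second claim I would apply the first to $T|\mathcal{M}$. Since $m_{T|\mathcal{M}}$ divides $m_{T}$ and the $\gamma_{j}$ are pairwise coprime, we get a factorization $m_{T|\mathcal{M}}=\prod_{j}\gamma_{j}'$ with $\gamma_{j}'\mid\gamma_{j}$ still pairwise coprime. Writing $\Gamma_{j}'=m_{T|\mathcal{M}}/\gamma_{j}'$, the first claim yields a quasidirect decomposition $\mathcal{M}=\bigvee_{j}\overline{\Gamma_{j}'(T)\mathcal{M}}$. Factoring $\Gamma_{j}=\Gamma_{j}'c_{j}$, the inner function $c_{j}$ is coprime to $\gamma_{j}'$, so $c_{j}(T)$ is a quasi-affinity on $\overline{\Gamma_{j}'(T)\mathcal{M}}$ (whose minimal function divides $\gamma_{j}'$); hence $\overline{\Gamma_{j}'(T)\mathcal{M}}=\overline{\Gamma_{j}(T)\mathcal{M}}$. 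The identification $\overline{\Gamma_{j}(T)\mathcal{M}}=\mathcal{M}\cap\mathcal{H}_{j}$ then reduces to $\ker(\gamma_{j}'(T)|\mathcal{M})=\mathcal{M}\cap\ker\gamma_{j}(T)$, which is a direct $\gcd$ computation: any $h\in\mathcal{M}$ killed by $\gamma_{j}(T)$ has inner annihilator dividing both $\gamma_{j}$ and $m_{T|\mathcal{M}}$, hence dividing $\gamma_{j}'$.

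For the complementation equivalence, the forward direction is quick: given a quasidirect complement $\mathcal{N}$ of $\mathcal{M}$, set $\mathcal{N}_{j}=\overline{\Gamma_{j}(T)\mathcal{N}}$; by continuity of $\Gamma_{j}(T)$ we have $\mathcal{M}_{j}\vee\mathcal{N}_{j}=\overline{\Gamma_{j}(T)(\mathcal{M}\vee\mathcal{N})}=\mathcal{H}_{j}$, and the second claim gives $\mathcal{M}_{j}\cap\mathcal{N}_{j}=(\mathcal{M}\cap\mathcal{H}_{j})\cap(\mathcal{N}\cap\mathcal{H}_{j})=\{0\}$. In the reverse direction, given quasidirect complements $\mathcal{N}_{j}$ of $\mathcal{M}_{j}$ in $\mathcal{H}_{j}$, set $\mathcal{N}=\bigvee_{j}\mathcal{N}_{j}$; the inclusions $\mathcal{N}_{j}\subseteq\mathcal{H}_{j}$ make this sum automatically quasidirect, and $\mathcal{M}\vee\mathcal{N}=\bigvee_{j}(\mathcal{M}_{j}\vee\mathcal{N}_{j})=\mathcal{H}$. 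The main obstacle is verifying $\mathcal{M}\cap\mathcal{N}=\{0\}$. For $h$ in this intersection I would approximate $h\in\mathcal{N}=\bigvee_{i}\mathcal{N}_{i}$ by finite sums $\sum_{i}h_{i}^{(\alpha)}$ with $h_{i}^{(\alpha)}\in\mathcal{N}_{i}\subseteq\ker\gamma_{i}(T)$; since $\gamma_{i}\mid\Gamma_{k}$ for $i\ne k$, the terms with $i\ne k$ are annihilated by $\Gamma_{k}(T)$, so $\Gamma_{k}(T)h=\lim_{\alpha}\Gamma_{k}(T)h_{k}^{(\alpha)}\in\mathcal{N}_{k}$. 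A symmetric argument using $\mathcal{M}=\bigvee_{i}\mathcal{M}_{i}$ gives $\Gamma_{k}(T)h\in\mathcal{M}_{k}$, so $\Gamma_{k}(T)h\in\mathcal{M}_{k}\cap\mathcal{N}_{k}=\{0\}$ for every $k$, and since $\gcd_{k}\Gamma_{k}\equiv1$ the inner annihilator of $h$ is a unit, forcing $h=0$.
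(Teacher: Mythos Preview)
Your proof is correct and follows the route the paper itself indicates: the paper does not give an explicit argument but simply states that the result ``is easily obtained from \cite[Theorem II.4.6]{C0-book},'' and your derivation is precisely a careful unpacking of that citation via the identity $\overline{\Gamma_{j}(T)\mathcal{H}}=\ker\gamma_{j}(T)$. The only part going beyond the cited theorem is the complementation equivalence, and your treatment of the nontrivial direction---showing $\Gamma_{k}(T)h\in\mathcal{M}_{k}\cap\mathcal{N}_{k}$ via the approximation argument and then using $\bigwedge_{k}\Gamma_{k}\equiv1$---is sound.
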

The decomposition of $m_{T}$ to which this proposition is applied
arise as follows.
\begin{lem}
\label{lem:achieving-total order}
\begin{enumerate}
\item Consider functions $m,f_{1},f_{2},\dots,f_{k}\in H^{\infty}$ such
that $m$ is inner. There exist pairwise relative prime inner functions
$\gamma_{1},\gamma_{2},\dots,\gamma_{n}$ in $H^{\infty}$ with the
property that $m=\gamma_{1}\gamma_{2}\cdots\gamma_{n}$ and the set
of inner functions 
\[
\{f_{1}\wedge\gamma_{i},f_{2}\wedge\gamma_{i},\dots,f_{k}\wedge\gamma_{i}\}
\]
 is totally ordered by divisibility for $i=1,2,\dots,n$.
\item Consider inner functions $\theta_{1},\theta_{2},\dots,\theta_{N}\in H^{\infty}$
such that $\theta_{j+1}|\theta_{j}$ for $j=1,2,\dots,N-1$. There
exists a factorization $\theta_{1}=\gamma_{1}\gamma_{2}\cdots\gamma_{n}$
into pairwise relatively prime inner factors with the following property:
if $\omega$ is a nonconstant inner factor of $\gamma_{k}$ for some
$k=1,2,\dots,n$ and $\theta_{j}\wedge\gamma_{k}$ is not constant
for some $j=2,\dots,N$, then
\[
\omega\wedge\theta_{j}\wedge\gamma_{k}\not\equiv1.
\]

\end{enumerate}
\end{lem}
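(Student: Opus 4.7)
The plan is to convert both parts into measure-theoretic partition problems. Every inner function $\theta\in H^{\infty}$ has a canonical factorization $\theta=c\,B_{\theta}S_{\theta}$ with $c$ a unimodular constant, $B_{\theta}$ a Blaschke product encoded by a multiplicity function $b_{\theta}:\mathbb{D}\to\mathbb{Z}_{\ge0}$, and $S_{\theta}$ a singular inner function encoded by a positive singular measure $\sigma_{\theta}$ on $\partial\mathbb{D}$. Divisibility $\varphi|\psi$ is equivalent to the pointwise inequality $b_{\varphi}\le b_{\psi}$ together with $\sigma_{\varphi}\le\sigma_{\psi}$ as measures, and the greatest common divisor $\varphi\wedge\psi$ corresponds to $(\min(b_{\varphi},b_{\psi}),\sigma_{\varphi}\wedge\sigma_{\psi})$. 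A factorization of an inner $m$ into pairwise relatively prime factors amounts to partitioning the ``support'' $\{b_{m}>0\}\cup\mathrm{supp}(\sigma_{m})$ into pieces and letting each factor carry the restriction of the data of $m$ to one piece.

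For part (1) the first step is to replace each $f_{i}$ by $f_{i}\wedge m$ so that $f_{i}|m$, giving $b_{f_{i}}\le b_{m}$ and $\sigma_{f_{i}}\le\sigma_{m}$. At each zero $a$ of $m$ the tuple $(b_{f_{1}}(a),\ldots,b_{f_{k}}(a))$ determines one of finitely many total preorders on $\{1,\ldots,k\}$, and I would group the zeros of $m$ by this preorder. For the singular part I would choose Radon-Nikodym densities $g_{i}=d\sigma_{f_{i}}/d\sigma_{m}\in[0,1]$ and partition $\mathrm{supp}(\sigma_{m})$ into Borel level sets $E_{\pi}$ on which the preorder of $(g_{1}(x),\ldots,g_{k}(x))$ is a fixed $\pi$. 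Taking $\gamma_{\pi}$ to be the inner factor carrying the data $(b_{m}|_{E_{\pi}},\sigma_{m}|_{E_{\pi}})$, one computes that $f_{i}\wedge\gamma_{\pi}$ has data $(b_{f_{i}}|_{E_{\pi}},g_{i}\sigma_{m}|_{E_{\pi}})$, and these are totally ordered in $i$ by construction, so $\{f_{1}\wedge\gamma_{\pi},\ldots,f_{k}\wedge\gamma_{\pi}\}$ is a divisibility chain.

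For part (2) the hypothesis $\theta_{j+1}|\theta_{j}$ gives $b_{\theta_{j+1}}\le b_{\theta_{j}}$ and $\sigma_{\theta_{j+1}}\le\sigma_{\theta_{j}}$, so the sets $\{b_{\theta_{j}}>0\}$ and the supports of $\sigma_{\theta_{j}}$ are nested and decreasing in $j$. For each zero $a$ of $\theta_{1}$ set $k(a)=\max\{j:b_{\theta_{j}}(a)>0\}$, and for the singular part choose $h_{j}=d\sigma_{\theta_{j}}/d\sigma_{\theta_{1}}$ with $1=h_{1}\ge\cdots\ge h_{N}$ and set $k(x)=\max\{j:h_{j}(x)>0\}$, well defined $\sigma_{\theta_{1}}$-almost everywhere. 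Partitioning the support of $\theta_{1}$ into the level sets $\{k(\cdot)=k\}$ for $k=1,\ldots,N$ and letting $\gamma_{k}$ carry the restriction of $\theta_{1}$'s data to the $k$-th piece yields a factorization $\theta_{1}=\gamma_{1}\cdots\gamma_{n}$ into pairwise relatively prime inner factors. On the piece supporting $\gamma_{k}$ every $\theta_{j}$ with $j\le k$ has strictly positive multiplicity or density almost everywhere, so $\theta_{j}\wedge\gamma_{k}$ shares its support with $\gamma_{k}$; for $j>k$ one has $\theta_{j}\wedge\gamma_{k}\equiv1$. Consequently any nonconstant inner divisor $\omega$ of $\gamma_{k}$ shares a zero or a piece of singular support with $\theta_{j}\wedge\gamma_{k}$ whenever the latter is nonconstant, giving $\omega\wedge\theta_{j}\wedge\gamma_{k}\not\equiv1$.

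The main technical obstacle will be the measurability and representability bookkeeping for the singular parts: verifying that the level sets built from preorders of Radon-Nikodym densities are Borel, that restrictions of the singular measures to these sets remain singular and so correspond to bona fide inner functions, and that the resulting factors are pairwise relatively prime rather than merely essentially disjoint. The finiteness of $k$ in (1) and of $N$ in (2) ensures that only finitely many nontrivial pieces arise.
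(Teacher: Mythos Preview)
Your proposal is correct and follows essentially the same approach as the paper: both arguments encode inner functions by their zero multiplicity functions and singular measures, pass to Radon--Nikodym densities $h_j=d\sigma_{f_j}/d\sigma_m$ (or $d\sigma_{\theta_j}/d\sigma_{\theta_1}$), and obtain the $\gamma_i$ by partitioning the support into Borel level sets on which the relevant ordering of the $h_j$ is constant (for (1)) or on which the last index with $h_j>0$ is constant (for (2)). The only cosmetic differences are that the paper first splits into the Blaschke and singular cases and only writes out the singular one, and indexes the pieces in (1) by permutations rather than by preorders.
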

\begin{proof}
For the proof, it suffices to consider the case when $m$ is either
a Blaschke product or a singular inner function. We only treat the
case when 
\[
m(\lambda)=\exp\left[\int_{0}^{2\pi}\frac{\lambda+e^{it}}{\lambda-e^{it}}\, d\mu(t)\right],\quad\lambda\in\mathbb{D},
\]
where $\mu$ is a singular measure on the interval $[0,2\pi)$. We
have then
\[
(f_{j}\wedge m)(\lambda)=\exp\left[\int_{0}^{2\pi}\frac{\lambda+e^{it}}{\lambda-e^{it}}h_{j}(t)\, d\mu(t)\right],\quad\lambda\in\mathbb{D},j=1,2\dots,k,
\]
where the functions $h_{j}:[0,2\pi)\to[0,1]$ are Borel measurable.
There is a Borel partition $[0,2\pi)=\bigcup_{\sigma}A_{\sigma}$
indexed by the permutations $\sigma$ of $\{1,2,\dots,k\}$ such that
\[
h_{\sigma(1)}(t)\le h_{\sigma(2)}(t)\le\cdots\le h_{\sigma(k)}(t),\quad t\in A_{\sigma},
\]
for each $\sigma$. We write the function $m$ as the product of the
inner functions
\[
\gamma_{\sigma}(\lambda)=\exp\left[\int_{A_{\sigma}}\frac{\lambda+e^{it}}{\lambda-e^{it}}\, d\mu(t)\right],\quad\lambda\in\mathbb{D}.
\]
This decomposition satisfies the conclusion of (1). For (2), we assume
again that the functions $\theta_{j}$ are singular, so that
\[
\theta_{j}(\lambda)=\exp\left[\int_{0}^{2\pi}\frac{\lambda+e^{it}}{\lambda-e^{it}}h_{j}(t)\, d\mu(t)\right],\quad\lambda\in\mathbb{D},j=1,2\dots,N,
\]
where the Borel functions $h_{j}$ are such that 
\[
1=h_{1}\ge h_{2}\ge\cdots\ge h_{N}\ge0.
\]
Define now Borel sets $B_{k}=\{t\in[0,2\pi):h_{k}(t)>h_{k+1}(t)=0\}$,
$k=1,2,\dots,N$, using the convention $h_{N+1}=0$ in the definition
of $B_{N}$. The functions
\[
\gamma_{k}(\lambda)=\exp\left[\int_{B_{k}}\frac{\lambda+e^{it}}{\lambda-e^{it}}\, d\mu(t)\right],\quad\lambda\in\mathbb{D},k=1,2\dots,N,
\]
satisfy the requirements of (2) with $n=N$. The case of Blaschke
products is treated similarly, with the functions $h_{j}$ being replaced
by the functions $\nu_{j}(\lambda)$ representing the order of $\lambda\in\mathbb{D}$
as a zero of $\theta_{j}$.
\end{proof}
We need more precise information about the construction of the Jordan
model for operators $T$ of class $C_{0}$ for which $I-T^{*}T$ has
finite rank $N$. Such operators are said to be of class $C_{0}(N)$;
they are constructed as follows. Consider an inner function $\Theta$
on the unit disk whose values are complex $N\times N$ matrices. Thus,
$\|\Theta(\lambda)\|\le1$ for every $\lambda\in\mathbb{D}$, and
the the boundary values $\Theta(\zeta)$ are unitary matrices for
almost every $\zeta\in\mathbb{T}=\partial\mathbb{D}$. Such a function
determines a multiplication operator $M_{\Theta}$ on the space $H^{2}\otimes\mathbb{C}^{N}$,
and the space $M_{\Theta}[H^{2}\otimes\mathbb{C}^{N}]$ is invariant
for the shift $S\otimes I_{\mathbb{C}^{N}}$ of multiplicity $N$.
As in the case $N=1$, we write
\[
\mathcal{H}(\Theta)=[H^{2}\otimes\mathbb{C}^{N}]\ominus M_{\Theta}[H^{2}\otimes\mathbb{C}^{N}],
\]
and denote by $S(\Theta)$ the compression of $S\otimes I_{\mathbb{C}^{N}}$
to $\mathcal{H}(\Theta)$. The operator $S(\Theta)$ constructed this
way is of class $C_{0}(N')$ for some $N'\le N$, and every operator
of class $C_{0}(N)$ is unitarily equivalent to $S(\Theta)$ for some
function $\Theta$ with the above properties. The Jordan model of
$S(\Theta)$ can be obtained directly by finding an analogue of the
Smith normal form for the matrix $\Theta$ \cite{nordg,moore-nor}.
We recall the basic definitions. Assume that $A$ and $B$ are two
$p\times q$ matrices with elements from $H^{\infty}$. We say that
$A$ is \emph{quasiequivalent} to $B$ if, for any inner function
$\omega$, there exist matrices $X,Y$ over $H^{\infty}$ of sizes
$p\times p,q\times q$, respectively, such that
\begin{equation}
XA=BY\label{eq:quaquasi}
\end{equation}
and the functions $\det(X),\det(Y)$ are relatively prime to $\omega$,
that is, neither of them has any nonconstant common inner factors
with $\omega$. Despite the asymmetry in the definition, quasiequivalence
is an equivalence relation. Indeed, equation (\ref{eq:quaquasi})
implies the relation
\[
X_{1}B=AY_{1},
\]
where $X_{1}=\det(Y)X^{{\rm Ad}}$ and $Y_{1}=\det(X)Y^{{\rm Ad}}$.
According to \cite{nordg}, every $p\times q$ matrix $A$ over $H^{\infty}$
is quasiequivalent to a matrix of the form
\[
\left[\begin{array}{ccccc}
\theta_{1} & 0 & 0 & \cdots & 0\\
0 & \theta_{2} & 0 & \cdots & 0\\
0 & 0 & \theta_{3} & \cdots & 0\\
\vdots & \vdots & \vdots & \ddots & \vdots\\
0 & 0 & 0 & \cdots
\end{array}\right],
\]
where the functions $\theta_{1},\theta_{2},\dots,\theta_{\min\{p,q\}}$
are inner or zero and satisfy $\theta_{n+1}|\theta_{n}$. These functions
are uniquely determined except for scalar factors of absolute value
$1$. None of the functions $\theta_{n}$ is zero if $A$ has a nonzero
minor of order $\min\{p,q\}$. This result can be applied to an inner
$N\times N$ matrix $\Theta$ to yield inner functions $\theta_{1},\theta_{2},\dots,\theta_{N}$
such that $\theta_{n+1}|\theta_{n}$ for $n=1,2,\dots,N-1$, and $N\times N$
matrices $X,Y$ over $H^{\infty}$ such that 
\[
\Theta X=Y\Theta'
\]
and $\det(X),\det(Y)$ are relatively prime to $\theta_{1}$, where
$\Theta'$ is the diagonal matrix with diagonal entries $\theta_{1},\theta_{2},\dots,\theta_{N}$.
The conditions on the determinants above can be written as
\[
\det(X)\wedge\theta_{1}\equiv\det(Y)\wedge\theta_{1}\equiv1.
\]

Denote by $y_{1},y_{2},\dots,y_{N}$ the columns of the matrix $Y$,
which can be viewed as vectors in $H^{2}\otimes\mathbb{C}^{N}$. In
other words, $y_{n}=Y(1\otimes e_{n})$, where $\{e_{n}\}_{n=1}^{N}$
is the standard basis in $\mathbb{C}^{N}$. Then the results of \cite{moore-nor}
say that $\theta_{n}^{S(\Theta)}\equiv\theta_{n}$ for $n=1,2,\dots,N,$
and the vectors $\{P_{\mathcal{H}(\Theta)}y_{n}\}_{n=1}^{N}$ form
a $C_{0}$-basis for $S(\Theta)$. Note incidentally that $S(\Theta)$
need not have multiplicity equal to $N$. Indeed, the last few of
the functions $\theta_{n}$ could be constant, and the corresponding
vectors in the $C_{0}$-basis would be $0$. The following lemma provides
a formulation of the $C_{0}$-basis property in terms of the vectors
$y_{n}$.
\begin{lem}
\label{lem:troubling-lemma-1}With the above notation, assume that
$\{u_{n}\}_{n=1}^{N}\subset H^{\infty}$ and 
\begin{equation}
\sum_{n=1}^{N}u_{n}y_{n}\in M_{\Theta}[H^{2}\otimes\mathbb{C}^{N}].\label{eq:trouble1-1}
\end{equation}
Then $\theta_{n}|u_{n}$ for $n=1,2,\dots,N$. \end{lem}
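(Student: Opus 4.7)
The plan is to translate the hypothesis into an equation on $\mathcal{H}(\Theta)$ and then apply the injectivity of the canonical quasiaffinity associated with the $C_0$-basis $\{h_n\}_{n=1}^N$, where $h_n = P_{\mathcal{H}(\Theta)}y_n$.

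First, (\ref{eq:trouble1-1}) is equivalent to $P_{\mathcal{H}(\Theta)}\sum_{n=1}^N u_n y_n = 0$. Since $M_\Theta[H^2\otimes\mathbb{C}^N]$ is invariant under multiplication by any $f\in H^\infty$, the Sz.-Nagy--Foias functional calculus satisfies the identity $f(S(\Theta))P_{\mathcal{H}(\Theta)}y = P_{\mathcal{H}(\Theta)}(fy)$ for every $y\in H^2\otimes\mathbb{C}^N$. With $h_n = P_{\mathcal{H}(\Theta)}y_n$, the hypothesis therefore reads
\[
\sum_{n=1}^N u_n(S(\Theta))h_n = 0.
\]

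Next, I would reuse the operator $X\colon\bigoplus_{n=1}^N\mathcal{H}(\theta_n)\to\mathcal{H}(\Theta)$ constructed in the proof of Proposition \ref{prop:basis when multiplicity is finite}, which is a quasiaffinity. On each summand $X$ restricts to an injective intertwiner $X_n\colon\mathcal{H}(\theta_n)\to\mathcal{H}_n$, where $\mathcal{H}_n$ is the cyclic subspace generated by $h_n$ and $S(\Theta)|\mathcal{H}_n\sim S(\theta_n)$. Normalizing so that $X_n\cdot 1 = h_n$ (which is possible because $h_n$ is cyclic for $S(\Theta)|\mathcal{H}_n$), the intertwining relation $X_n S(\theta_n)=(S(\Theta)|\mathcal{H}_n)X_n$ gives $X_n(P_{\mathcal{H}(\theta_n)}u) = u(S(\Theta))h_n$ for each $u\in H^\infty$. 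The displayed equation then states exactly that $X$ annihilates the vector $P_{\mathcal{H}(\theta_1)}u_1\oplus\cdots\oplus P_{\mathcal{H}(\theta_N)}u_N$. Injectivity of $X$ forces $P_{\mathcal{H}(\theta_n)}u_n = 0$ for every $n$, and since $u_n\in H^\infty$, this is equivalent to $\theta_n\mid u_n$, as desired.

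The argument is essentially bookkeeping: once the hypothesis is rewritten as the vanishing of $\sum u_n(S(\Theta))h_n$, the $C_0$-basis property furnished by Proposition \ref{prop:basis when multiplicity is finite} produces the required divisibilities. I anticipate no substantive obstacle; the only mildly delicate point is the correct normalization of the intertwiners $X_n$ so that $X_n(P_{\mathcal{H}(\theta_n)}u) = u(S(\Theta))h_n$, and this follows routinely from the cyclicity of $h_n$ in $\mathcal{H}_n$.
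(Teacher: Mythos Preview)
Your argument is correct and takes a genuinely different route from the paper's. The paper works entirely at the level of matrices over $H^{\infty}$: writing $\sum_{n}u_{n}y_{n}=Y\Theta'\sum_{n}(u_{n}/\theta_{n})\otimes e_{n}=\Theta X\sum_{n}(u_{n}/\theta_{n})\otimes e_{n}$, it uses injectivity of $M_{\Theta}$ and then the adjugate $X^{\mathrm{Ad}}$ to conclude that $\theta_{n}\mid u_{n}\det(X)$, whence $\theta_{n}\mid u_{n}$ because $\det(X)\wedge\theta_{1}\equiv 1$. This is a short, self-contained computation that does not invoke the $C_{0}$-basis property at all. Your approach instead \emph{uses} the Moore--Nordgren fact (stated just before the lemma) that $\{P_{\mathcal{H}(\Theta)}y_{n}\}$ is a $C_{0}$-basis, and then appeals to the injectivity of the quasiaffinity $X$ from Proposition~\ref{prop:basis when multiplicity is finite}. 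Conceptually yours reads the lemma as a tautological restatement of the basis property; the paper's proof is more elementary and could in principle serve as an independent verification of that property.

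One point deserves a sharper justification than ``cyclicity of $h_{n}$.'' You need a \emph{bounded} intertwiner $X_{n}:\mathcal{H}(\theta_{n})\to\mathcal{H}_{n}$ with $X_{n}(P_{\mathcal{H}(\theta_{n})}u)=u(S(\Theta))h_{n}$. Cyclicity alone does not manufacture boundedness; what does is that $y_{n}\in H^{\infty}\otimes\mathbb{C}^{N}$ (it is a column of the matrix $Y$). Define $\widetilde{X}_{n}:H^{2}\to H^{2}\otimes\mathbb{C}^{N}$ by $\widetilde{X}_{n}f=fy_{n}$; this is bounded, intertwines $S$ with $S\otimes I_{\mathbb{C}^{N}}$, and $P_{\mathcal{H}(\Theta)}\widetilde{X}_{n}$ annihilates $\theta_{n}H^{2}$ because $\theta_{n}y_{n}=\Theta X(1\otimes e_{n})\in M_{\Theta}[H^{2}\otimes\mathbb{C}^{N}]$. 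Hence $P_{\mathcal{H}(\Theta)}\widetilde{X}_{n}$ factors through $\mathcal{H}(\theta_{n})$ to give your $X_{n}$. With this correction the rest of your argument goes through; alternatively, once you have $\sum_{n}u_{n}(S(\Theta))h_{n}=0$ you can bypass Proposition~\ref{prop:basis when multiplicity is finite} entirely and argue directly from the quasidirect-sum property of a $C_{0}$-basis that each summand $u_{n}(S(\Theta))h_{n}$ vanishes.
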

\begin{proof}
Observe that
\[
\sum_{n=1}^{N}u_{n}y_{n}=\sum_{n=1}^{N}Y(u_{n}\otimes e_{n})=Y\Theta'\sum_{n=1}^{N}\frac{u_{n}}{\theta_{n}}\otimes e_{j}=\Theta X\sum_{n=1}^{N}\frac{u_{n}}{\theta_{n}}\otimes e_{n},
\]
and (\ref{eq:trouble1-1}) implies that
\[
X\sum_{n=1}^{N}\frac{u_{n}}{\theta_{n}}\otimes e_{n}\in H^{2}\otimes\mathbb{C}^{N}
\]
 because $M_{\Theta}$ is one-to-one. Therefore
\[
\det(X)\sum_{n=1}^{N}\frac{u_{n}}{\theta_{n}}\otimes e_{n}=X^{{\rm Ad}}X\sum_{n=1}^{N}\frac{u_{n}}{\theta_{n}}\otimes e_{n}\in H^{2}\otimes\mathbb{C}^{N}
\]
 as well, so that $\theta_{n}|u_{n}\det(X)$ for all $n$. Since $\theta_{n}\wedge\det(X)|\theta_{1}\wedge\det(X)\equiv1$,
we conclude that $\theta_{n}|u_{n}$, as claimed.
\end{proof}
We use repeatedly the following result about operators of class $C_{0}$
with finite multiplicity. The proof follows from \cite[Proposition VII.6.9]{C0-book}.

\begin{prop}
\label{prop:dense-vectors}Let $T$ be an operator of class $C_{0}$
on $\mathcal{H}$ with $\mu_{T}<+\infty$, let $T'$ be a completely
nonunitary contraction on $\mathcal{H}'$, and let $X:\mathcal{H}'\to\mathcal{H}$
be a linear operator with dense range such that $XT'=TX$. Then every
invariant subspace $\mathcal{M}$ for $T$ is of the form $\overline{X\mathcal{M}'}$,
where $\mathcal{M}'$ is an invariant subspace for $T'$. If $T|\mathcal{M}$
has a cyclic vector, then it also has a cyclic vector of the form
$Xh'$ with $h'\in\mathcal{H}'.$
\end{prop}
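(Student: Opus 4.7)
The natural candidate is $\mathcal{M}':=X^{-1}(\mathcal{M})$, which is closed by continuity of $X$ and is $T'$-invariant, since $X(T'h')=T(Xh')\in T\mathcal{M}\subset\mathcal{M}$ for every $h'\in\mathcal{M}'$. The inclusion $\overline{X\mathcal{M}'}\subset\mathcal{M}$ is immediate, as $X\mathcal{M}'\subset\mathcal{M}$ and $\mathcal{M}$ is closed. All the content of the first statement is in the reverse inclusion, which amounts to showing that the restricted map $X_1:=X|\mathcal{M}':\mathcal{M}'\to\mathcal{M}$ has dense range.

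Now $X_1$ intertwines the c.n.u.\ contraction $T'|\mathcal{M}'$ with the operator $T|\mathcal{M}$, and the latter is of class $C_0$ with $\mu_{T|\mathcal{M}}\le\mu_T<+\infty$. This is precisely the framework of \cite[Proposition VII.6.9]{C0-book}, whose conclusion simultaneously yields the density of $X_1\mathcal{M}'$ in $\mathcal{M}$ and, when $T|\mathcal{M}$ is cyclic, the existence of $h'\in\mathcal{H}'$ for which $Xh'$ is cyclic for $T|\mathcal{M}$. Invoking that result gives both assertions of the proposition at once.

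The main obstacle, which is the substance of the cited proposition, is the density statement for $X_1$. Density of the range of $X$ in $\mathcal{H}$ only supplies approximations $Xy_n'\to h$ of an arbitrary $h\in\mathcal{M}$ by vectors that need not lie in $\mathcal{M}$; forcing $y_n'\in X^{-1}(\mathcal{M})$ genuinely requires the finite-multiplicity $C_0$ structure of $T$. A self-contained argument would proceed by fixing a $C_0$-basis of $T|\mathcal{M}$ of length $k=\mu_{T|\mathcal{M}}$, approximating each basis vector by elements of $X\mathcal{H}'$, and exploiting the fact that in the presence of finite multiplicity the property of being a $C_0$-basis is robust under sufficiently small perturbations (an openness statement built into the characterization in Proposition \ref{prop:basis when multiplicity is finite}). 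For the cyclic refinement, one then applies this density to a cyclic vector of $T|\mathcal{M}$, noting that cyclic vectors are dense in $\mathcal{M}$ when $\mu_{T|\mathcal{M}}=1$, since the non-cyclic vectors are contained in the union of the proper invariant subspaces $\overline{\varphi(T|\mathcal{M})\mathcal{M}}$ associated with non-trivial inner divisors $\varphi$ of $m_{T|\mathcal{M}}$, none of which has interior in $\mathcal{M}$.
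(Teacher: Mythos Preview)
The paper's own proof is a bare citation of \cite[Proposition~VII.6.9]{C0-book}, so your proposal is really being compared against that reference rather than against an argument in the paper. Your identification of $\mathcal{M}'=X^{-1}(\mathcal{M})$ and the easy inclusion $\overline{X\mathcal{M}'}\subset\mathcal{M}$ are correct and are indeed how one would begin.

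There is, however, a circularity in your second paragraph. You propose to apply \cite[Proposition~VII.6.9]{C0-book} to the triple $(T|\mathcal{M},\,T'|\mathcal{M}',\,X_1)$. But that result (as does the proposition you are proving) carries the hypothesis that the intertwining operator have dense range in the target space. For $X_1:\mathcal{M}'\to\mathcal{M}$ this hypothesis is precisely the statement $\overline{X\mathcal{M}'}=\mathcal{M}$ that you are trying to establish, so nothing has been gained by restricting. The way the paper intends the citation is to apply the result to the original map $X:\mathcal{H}'\to\mathcal{H}$, whose dense range is given; the conclusion (that the lattice map $\mathcal{N}'\mapsto\overline{X\mathcal{N}'}$ is onto ${\rm Lat}(T)$, together with the cyclic-vector refinement) is then exactly the proposition.

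Your third paragraph sketches an independent argument, but it has a gap of its own. Approximating a $C_0$-basis $\{h_x\}$ of $T|\mathcal{M}$ by vectors $Xh_x'\in X\mathcal{H}'$ produces vectors that are close to $\mathcal{M}$ but in general do not lie in $\mathcal{M}$. The ``openness'' of the $C_0$-basis property you invoke (via Proposition~\ref{prop:basis when multiplicity is finite}) is a statement about vectors that already generate the ambient space and satisfy the annihilation conditions; it does not say that a small perturbation of a basis for $T|\mathcal{M}$ generates the same invariant subspace $\mathcal{M}$ inside $\mathcal{H}$. The same objection applies to the cyclic refinement: the density of cyclic vectors is a statement inside $\mathcal{M}$, whereas the approximants $Xh'$ need not belong to $\mathcal{M}$ a priori. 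The actual mechanism in \cite{C0-book} is different and uses the functional model and the structure of the commutant rather than a naive perturbation argument.
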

It is useful to consider more general invariant subspaces of $S\otimes I_{\mathbb{C}^{N}}$.
These are characterized by the Beurling-Lax-Halmos theorem \cite[Theorem V.3.3]{SN}.
\begin{thm}
\label{thm:Beurling-L-H}Consider an invariant subspace $\mathcal{K}\subset H^{2}\otimes\mathbb{C}^{N}$
for $S\otimes I_{\mathbb{C}^{N}}.$ There exist an integer $r\le N$
and an inner function $\Psi$ with values $N\times r$ complex matrices
so that $\mathcal{K}=M_{\Psi}[H^{2}\otimes\mathbb{C}^{r}]$.
\end{thm}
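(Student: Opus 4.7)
The plan is the classical Wold-decomposition argument. Write $V = S \otimes I_{\mathbb{C}^N}$, which is an isometry on $H^2 \otimes \mathbb{C}^N$; since $\mathcal{K}$ is invariant, $V|\mathcal{K}$ is also an isometry. First I would verify that $V|\mathcal{K}$ is a \emph{pure} isometry: in its Wold decomposition $\mathcal{K} = \mathcal{K}_p \oplus \mathcal{K}_u$, with $V|\mathcal{K}_u$ unitary, one has $\mathcal{K}_u \subseteq \bigcap_{n\ge 0} V^n \mathcal{K} \subseteq \bigcap_{n\ge 0} V^n(H^2 \otimes \mathbb{C}^N) = \{0\}$, the last equality holding because $V$ is itself a pure shift on the ambient space. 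Setting $\mathcal{W} = \mathcal{K} \ominus V\mathcal{K}$, this gives the orthogonal decomposition $\mathcal{K} = \bigoplus_{n\ge 0} V^n \mathcal{W}$, and the Wold isomorphism produces a unitary $U \colon H^2 \otimes \mathcal{W} \to \mathcal{K}$ intertwining $S \otimes I_\mathcal{W}$ with $V|\mathcal{K}$.

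Composing with the inclusion $\iota \colon \mathcal{K} \hookrightarrow H^2 \otimes \mathbb{C}^N$ yields an isometry $\iota U \colon H^2 \otimes \mathcal{W} \to H^2 \otimes \mathbb{C}^N$ that intertwines $S \otimes I_\mathcal{W}$ with $S \otimes I_{\mathbb{C}^N}$. By the standard description of operators commuting with the shift on vector-valued $H^2$ spaces, every such bounded intertwiner is given by multiplication by a bounded operator-valued analytic symbol on $\mathbb{D}$. Thus $\iota U = M_\Psi$ for some $\Psi \in H^\infty$ whose values are bounded operators from $\mathcal{W}$ into $\mathbb{C}^N$.

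Finally, since $M_\Psi$ is isometric on all of $H^2 \otimes \mathcal{W}$, the boundary values $\Psi(\zeta)$ are isometries for almost every $\zeta \in \mathbb{T}$. This simultaneously forces $\dim \mathcal{W} \le N$ (an isometric linear embedding $\mathcal{W} \hookrightarrow \mathbb{C}^N$ must exist) and guarantees that $\Psi$ is inner. Setting $r = \dim \mathcal{W}$ and fixing an identification $\mathcal{W} \cong \mathbb{C}^r$, the symbol $\Psi$ becomes an $N \times r$ matrix-valued inner function, and $\mathcal{K} = \iota U[H^2 \otimes \mathcal{W}] = M_\Psi[H^2 \otimes \mathbb{C}^r]$. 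The main obstacle is the identification of the intertwining isometry with a multiplication operator; this is the classical commutant description underlying the scalar Beurling theorem (see \cite{SN}), and once it is invoked the remaining steps are essentially bookkeeping.
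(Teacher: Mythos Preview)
Your argument is correct and is the classical Wold-decomposition proof of the Beurling--Lax--Halmos theorem. Note, however, that the paper does not give its own proof of this statement at all: it simply quotes the result as \cite[Theorem V.3.3]{SN}, so there is nothing to compare against beyond observing that the proof in that reference proceeds along exactly the lines you outline.
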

The fact that $\Psi$ is inner implies that it has nonzero minors
of order $r$, and therefore quasidiagonalization produces a matrix
with $r$ inner functions $\psi_{1},\psi_{2},\dots,\psi_{r}$ on the
main diagonal. We call the number $r$ the rank of the invariant subspace
$\mathcal{K}$, and observe that $r$ is simply the multiplicity of
the unilateral shift $S\otimes I_{\mathbb{C}^{N}}|\mathcal{K}$. We
also use the notation 
\[
d(\mathcal{K})=\psi_{1}\psi_{2}\cdots\psi_{r}
\]
 for the product of these functions. The function $d(\mathcal{K})$
is inner, and it is uniquely determined up to a scalar factor. In
the special case when $\mathcal{K}$ is of maximum rank $r=N$, we
have 
\[
d(\mathcal{K})\equiv\det(\Psi).
\]

More generally, if $V$ is a unilateral shift of finite multiplicity
on a space $\mathcal{M}$ and $\mathcal{K}\subset\mathcal{M}$ is
an invariant subspace for $V$, we can define an inner function
\[
d_{\mathcal{M}}(\mathcal{K})
\]
by noting that $V$ is unitarily equivalent to $S\otimes I_{\mathbb{C}^{N}}$
for some $N$, and identifying $\mathcal{K}$ with the range of an
inner function as above. The multiplicative property of determinants
implies that
\[
d_{\mathcal{M}}(\mathcal{L})\equiv d_{\mathcal{M}}(\mathcal{K})d_{\mathcal{K}}(\mathcal{L})
\]
if $\mathcal{L}\subset\mathcal{K}$ are invariant subspaces of rank
$N$ of $V$.
\begin{lem}
\label{lem:when-d_M(K)=00003D1}Let $V$ be a unilateral shift of
finite multiplicity on a space $\mathcal{M}$, and let $\mathcal{K}\subset\mathcal{M}$
be an invariant subspace. The following conditions are equivalent:
\begin{enumerate}
\item $d_{\mathcal{M}}(\mathcal{K})\not\equiv1$.
\item There exist an inner function $\varphi\in H^{\infty}$ and a vector
$f\in\mathcal{M}\setminus\mathcal{K}$ such that $\varphi(V)f\in\mathcal{K}$.
\end{enumerate}
\end{lem}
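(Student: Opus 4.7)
The plan is to reduce to the diagonal case via quasiequivalence and then use the adjugate identity $XX^{{\rm Ad}}=\det(X)I_N$ (and its analogue for $Y$) to transfer information back to $\mathcal{K}$. Identify $V$ with $S\otimes I_{\mathbb{C}^N}$ on $\mathcal{M}=H^2\otimes\mathbb{C}^N$ and, by Theorem \ref{thm:Beurling-L-H}, write $\mathcal{K}=M_\Psi[H^2\otimes\mathbb{C}^r]$ for an $N\times r$ inner $\Psi$. Let $\Psi'$ be its quasidiagonal form, so $\Psi'$ has diagonal entries $\psi_1,\dots,\psi_r$ in its top $r\times r$ block and zeros below, $\mathcal{K}':=M_{\Psi'}[H^2\otimes\mathbb{C}^r]=\psi_1 H^2\oplus\cdots\oplus\psi_r H^2\oplus\{0\}^{N-r}$, and $d_\mathcal{M}(\mathcal{K})\equiv\psi_1\cdots\psi_r$. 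In this diagonal model both implications are essentially obvious; the work lies in pushing them through the quasiequivalence $X\Psi=\Psi' Y$ and its companion $X_1\Psi'=\Psi Y_1$, where $X_1=\det(Y)X^{{\rm Ad}}$ and $Y_1=\det(X)Y^{{\rm Ad}}$.

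For $(1)\Rightarrow(2)$, I use that $\psi_1\not\equiv 1$. Choose the quasiequivalence with $\omega=\psi_1$, so $\det(X)\wedge\psi_1\equiv\det(Y)\wedge\psi_1\equiv 1$. In the diagonal model the vector $e_1=1\otimes e_1^{(N)}$ lies outside $\mathcal{K}'$, while $\psi_1 e_1=\Psi'(1\otimes e_1^{(r)})\in\mathcal{K}'$. Transplant this example by setting $f=M_{X_1}e_1\in\mathcal{M}$; the relation $\psi_1(V)f=M_{X_1}\psi_1 e_1=M_\Psi Y_1(1\otimes e_1^{(r)})\in\mathcal{K}$ is then automatic. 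The substantive step is showing $f\notin\mathcal{K}$: if $f=\Psi h$, applying $M_X$ and using $XX_1=\det(X)\det(Y)I_N$ produces $\det(X)\det(Y)\,e_1=\Psi'Yh$, whose first coordinate forces $\psi_1\mid\det(X)\det(Y)$, contradicting our choice of $X$ and $Y$.

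For $(2)\Rightarrow(1)$ I argue by contrapositive. Suppose $d_\mathcal{M}(\mathcal{K})\equiv 1$, so every $\psi_j\equiv 1$ and $\Psi'$ is the obvious inclusion $\mathbb{C}^r\hookrightarrow\mathbb{C}^N$. Given $\varphi f=\Psi g\in\mathcal{K}$, choose the quasiequivalence with $\omega=\varphi$. Applying $M_X$ yields $\varphi M_X f=\Psi' M_Y g$; since $\Psi'$ is the identity on the top $r$ coordinates and zero below, this forces the last $N-r$ coordinates of $Xf$ to vanish and gives $Yg=\varphi(Xf)'$ in the top block. Multiplying by $Y^{{\rm Ad}}$ yields $\det(Y)g=\varphi Y^{{\rm Ad}}(Xf)'$; the coprimality of $\det(Y)$ with $\varphi$, combined with the inner-outer factorization of each component of $g$, then forces $g=\varphi g'$ componentwise, so $f=\Psi g'\in\mathcal{K}$.

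The only real obstacle is careful bookkeeping: one must tune the auxiliary inner function $\omega$ in the definition of quasiequivalence to the problem at hand ($\omega=\psi_1$ in one direction, $\omega=\varphi$ in the other) so that the parasitic factors $\det(X),\det(Y)$ introduced by the adjugate identities remain coprime to the divisibility being tested. This is precisely the strategy already used in Lemma \ref{lem:troubling-lemma-1}, and it adapts to the present setting with no new ideas required.
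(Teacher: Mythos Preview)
Your proof is correct and follows essentially the same strategy as the paper: reduce to the diagonal model via quasiequivalence, tune the auxiliary $\omega$ to the direction at hand, and use the adjugate identity to cancel the parasitic determinants. The only cosmetic differences are that you write the quasiequivalence as $X\Psi=\Psi'Y$ (so your $X$ is $N\times N$ and $Y$ is $r\times r$) while the paper writes $\Psi X=Y\Psi'$, and accordingly your witness vector for $(1)\Rightarrow(2)$ is $f=X_1 e_1$ whereas the paper uses the first column $y_1$ of its $Y$ and invokes the computation of Lemma~\ref{lem:troubling-lemma-1} to see $y_1\notin\mathcal{K}$; for $(2)\Rightarrow(1)$ your algebraic manipulations are arranged slightly differently but reach the same conclusion $g/\varphi\in H^2\otimes\mathbb{C}^r$.
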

\begin{proof}
We assume without loss of generality that $V=S\otimes I_{\mathbb{C}^{N}}$
for some $N\in\mathbb{N}$, and that $\mathcal{K}=M_{\Psi}[H^{2}\otimes\mathbb{C}^{r}]$
for some $r\in\{1,2,\dots,N\}$ and some inner $N\times r$ matrix
$\Psi$. Choose square matrices $X,Y$ over $H^{\infty}$ such that
$\det(X)\wedge d_{\mathcal{M}}(\mathcal{K})\equiv\det(Y)\wedge d_{\mathcal{M}}(\mathcal{K})\equiv1$
and $\Psi X=Y\Psi'$, where $\Psi'$ has inner entries $\psi_{1},\psi_{2},\dots,\psi_{r}$
on the main diagonal, zero entries elsewhere, and $\psi_{j+1}|\psi_{j}$
for $j=1,2,\dots,r-1$. Denote by $y_{1},y_{2},\dots,y_{r}$ the columns
of the matrix $Y$, and denote by $e_{1},e_{2},\dots,e_{r}$ the standard
basis in $\mathbb{C}^{r}$, which we also view as a subspace of $\mathbb{C}^{N}$.
Condition (1) is equivalent to $\psi_{1}\not\equiv1$. Assume first
that $\psi_{1}\not\equiv1$, and observe that
\[
\psi_{1}(V)y_{1}=\psi_{1}y_{1}=Y(\psi_{1}\otimes e_{1})=Y\Psi'(1\otimes e_{1})=\Psi X(1\otimes e_{1})
\]
belongs to the space $\mathcal{K}$, but $y_{1}\notin\mathcal{K}$,
as can be seen by repeating the proof of Lemma \ref{lem:troubling-lemma-1}.
Thus (1) implies (2). Conversely, assume that $\psi_{1}\equiv1$ and
a vector $f\in H^{2}\otimes\mathbb{C}^{N}$ satisfies $\varphi f\in\mathcal{K}$
for some inner function $\varphi$, say $\varphi f=\Psi h$ for some
$h\in H^{2}\otimes\mathbb{C}^{r}$. Note that the matrices $X,Y$
above can now be chosen so that their determinants are relatively
prime to $\varphi$, and $\Psi'$ can be chosen to simply be the matrix
representing the inclusion $\mathbb{C}^{r}\subset\mathbb{C}^{N}$.
We have 
\[
\varphi\det(X)f=\Psi\det(X)h=\Psi XX^{{\rm Ad}}h=Y\Psi'X^{{\rm Ad}}h,
\]
so that multiplying by $Y^{{\rm Ad}}$ yields 
\[
\varphi\det(X)Y^{{\rm Ad}}f=\det(Y)\Psi'X^{{\rm Ad}}h.
\]
 Apply now the matrix $X\oplus I_{\mathbb{C}^{N-r}}$ to both sides
to obtain
\[
(X\oplus I_{\mathbb{C}^{N-r}})\det(X)Y^{{\rm Ad}}\varphi f=\det(X)\det(Y)\Psi'h.
\]
In other words, since $\varphi\wedge(\det(X)\det(Y))\equiv1$, $\varphi$
divides all the components of the vector $\Psi'h$, and this simply
means that $h/\varphi\in H^{2}\otimes\mathbb{C}^{r}$. We conclude
that $f=\Psi(h/\varphi)$ does belong to $\mathcal{K}$, thus showing
that property (2) does not hold. Thus (2) implies (1).\end{proof}
\begin{cor}
Let $V$ be a shift of finite multiplicity on a Hilbert space $\mathcal{M}$,
and let $\mathcal{K}$ be an invariant subspace for $V$ such that
$d_{\mathcal{M}}(\mathcal{K})\equiv1$ and $V|\mathcal{K}$ has multiplicity
$r$. Fix an inner function $\omega\in H^{\infty}$. There exist vectors
$h_{1},h_{2},\dots,h_{r}\in\mathcal{K}$ with the following property:
if the functions $u_{1},u_{2},\dots,u_{r}\in H^{\infty}$ are such
that $\sum_{j=1}^{r}u_{j}(V)h_{j}\in\omega(V)\mathcal{M}$, then $\omega|u_{j}$
for all $j=1,2,\dots,r$.\end{cor}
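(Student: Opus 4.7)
The plan is to reduce to a concrete model and then mimic the argument used in the proof of Lemma \ref{lem:when-d_M(K)=00003D1}, only this time using quasiequivalence with respect to $\omega$ rather than with respect to the inner factor $\psi_{1}$. Assume without loss of generality that $V=S\otimes I_{\mathbb{C}^{N}}$ on $\mathcal{M}=H^{2}\otimes\mathbb{C}^{N}$ and that $\mathcal{K}=M_{\Psi}[H^{2}\otimes\mathbb{C}^{r}]$ for some inner $N\times r$ matrix $\Psi$. The hypothesis $d_{\mathcal{M}}(\mathcal{K})\equiv1$ means that the quasidiagonal form $\Psi'$ of $\Psi$ has all diagonal entries $\psi_{1},\dots,\psi_{r}$ constant, so we may take $\Psi'=P$, where $P$ is the $N\times r$ matrix of the natural embedding $\mathbb{C}^{r}\hookrightarrow\mathbb{C}^{N}$.

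Now apply quasiequivalence with respect to the prescribed inner function $\omega$: choose square matrices $X$ (of size $r\times r$) and $Y$ (of size $N\times N$) over $H^{\infty}$ such that
\[
\Psi X=YP,\qquad \det(X)\wedge\omega\equiv\det(Y)\wedge\omega\equiv1.
\]
Define $h_{j}$, for $j=1,2,\dots,r$, to be the $j$-th column of $\Psi X$, which equals the $j$-th column of $Y$. Since $h_{j}=\Psi(Xe_{j})$ lies in the range of $M_{\Psi}$, we have $h_{j}\in\mathcal{K}$, as required.

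Suppose now that $u_{1},\dots,u_{r}\in H^{\infty}$ satisfy $\sum_{j=1}^{r}u_{j}(V)h_{j}=\omega(V)f$ for some $f\in\mathcal{M}$. Identifying $h_{j}$ with the $j$-th column of $Y$, the left side is $Yu$, where $u=(u_{1},\dots,u_{r},0,\dots,0)^{T}\in H^{2}\otimes\mathbb{C}^{N}$. Multiplying the equality $Yu=\omega f$ on the left by $Y^{{\rm Ad}}$ yields
\[
\det(Y)\,u=\omega\,Y^{{\rm Ad}}f,
\]
so that $\omega$ divides $\det(Y)u_{j}$ in $H^{\infty}$ for each $j=1,2,\dots,r$. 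The coprimality condition $\det(Y)\wedge\omega\equiv1$ then forces $\omega\mid u_{j}$, which is the conclusion.

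The only conceptual subtlety is making sure that the matrices $X,Y$ appearing in the quasiequivalence can be chosen with determinants coprime not merely to the product $\psi_{1}\psi_{2}\cdots\psi_{r}$ (which here is trivial anyway) but to the arbitrary external inner function $\omega$; this, however, is built into the definition of quasiequivalence in \eqref{eq:quaquasi}. The remaining steps are essentially a single computation with the cofactor matrix $Y^{{\rm Ad}}$, identical in spirit to the proof of Lemma \ref{lem:troubling-lemma-1}.
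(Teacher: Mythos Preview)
Your proof is correct and follows essentially the same approach as the paper's own argument: both reduce to the model $V=S\otimes I_{\mathbb{C}^{N}}$, use quasiequivalence $\Psi X=Y\Psi'$ with determinants coprime to $\omega$ (where $\Psi'$ is the inclusion matrix), take the $h_{j}$ to be the first $r$ columns of $Y$, and conclude via multiplication by $Y^{{\rm Ad}}$. The paper's proof and yours are identical in substance and differ only in notation.
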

\begin{proof}
We assume with no loss of generality that $V=S\otimes I_{\mathbb{C}^{N}}$
on $\mathcal{M}=H^{2}\otimes\mathbb{C}^{N}$, and $\mathcal{Q}=M_{\Psi}[H^{2}\otimes\mathbb{C}^{r}]$
for some inner function $\Psi$. We apply quasiequivalence to obtain
square matrices $X,Y$ over $H^{\infty}$ such that $\Psi X=Y\Psi'$
and $\det(X)\wedge\omega\equiv\det(Y)\wedge\omega\equiv1$, where
$\Psi'$ is the matrix representing the inclusion $\mathbb{C}^{r}\subset\mathbb{C}^{N}$.
We claim that the columns $h_{1},h_{2},\dots,h_{r}$ of the matrix
$Y\Psi'$ satisfy the required property. Indeed, the equation $h_{j}=Y\Psi'(1\otimes e_{j})=\Psi X(1\otimes e_{j})$
shows that $h_{j}\in\mathcal{K}$. Assume now that $\sum_{j=1}^{r}u_{j}h_{j}=\omega h$
for some $h\in H^{2}\otimes\mathbb{C}^{N}$, where the coefficients
$u_{j}$ belong to $H^{\infty}$. We have
\[
\det(Y)\sum_{j=1}^{r}u_{j}\otimes e_{j}=Y^{{\rm Ad}}Y\sum_{j=1}^{r}u_{j}\otimes e_{j}=Y^{{\rm Ad}}\sum_{j=1}^{r}u_{j}h_{j}=\omega Y^{{\rm Ad}}h,
\]
and the divisibility $\omega|u_{j}$ follows because $\det(Y)\wedge\omega\equiv1$.
\end{proof}
Subspaces $\mathcal{K}$ with $d_{\mathcal{M}}(\mathcal{K})\equiv1$
are obtained as follows. Consider the field of fractions $\mathfrak{D}$
of $H^{\infty}$. That is $\mathfrak{D}=\{\varphi/\psi:\varphi\in H^{\infty},\psi\in H^{\infty}\setminus\{0\}\}$;
recall that $H^{2}\subset\mathfrak{D}$. A unilateral shift $V$ of
finite multiplicity on $\mathcal{M}$ turns $\mathcal{M}$ into a
module over $H^{\infty}$, and this module is contained in the finite
dimensional vector space $\mathfrak{D\mathcal{M}}$ over $\mathfrak{D}$.
Indeed, $\mathfrak{D}(H^{2}\otimes\mathbb{C}^{N})=\mathfrak{D}\otimes\mathbb{C}^{N}=\mathfrak{D}^{N}$,
thus showing that $\dim_{\mathfrak{D}}(\mathfrak{D}\mathcal{M})$
equals the multiplicity of $V$.
\begin{lem}
\label{lem:vector-space-intersect-H2}Let $V$ be a shift of finite
multiplicity on a Hilbert space $\mathcal{M}$, and let $\mathcal{Q}\subset\mathfrak{D}\mathcal{M}$
be a $\mathfrak{D}$-vector subspace. Then $\mathcal{K}=\mathcal{Q}\cap\mathcal{M}$
is closed in $\mathcal{M}$, it is invariant for $V$, and $d_{\mathcal{M}}(\mathcal{K})\equiv1$.
Conversely, every invariant subspace $\mathcal{K\subset\mathcal{M}}$
such that $d_{\mathcal{M}}(\mathcal{K})\equiv1$ satisfies the relation
$\mathcal{K}=(\mathfrak{D}\mathcal{K})\cap\mathcal{M}$.\end{lem}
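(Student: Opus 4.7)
For the forward direction, I would realize $V$ concretely as $S \otimes I_{\mathbb{C}^N}$ on $\mathcal{M} = H^2 \otimes \mathbb{C}^N$, so that $\mathfrak{D}\mathcal{M} = \mathfrak{D}^N$ is $N$-dimensional over $\mathfrak{D}$. The $\mathfrak{D}$-subspace $\mathcal{Q}$ is then the solution set of a homogeneous system of $\mathfrak{D}$-linear equations encoded by some matrix $A$; after clearing denominators I may take $A$ to have entries in $H^\infty$. The subspace $\mathcal{K} = \mathcal{Q} \cap \mathcal{M}$ then coincides with the kernel, inside $\mathcal{M}$, of the bounded multiplication operator $f \mapsto Af$ into $H^2 \otimes \mathbb{C}^k$, so $\mathcal{K}$ is closed, and because $A$ commutes with $V$ it is also $V$-invariant. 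For the divisibility claim I verify the negation of condition (2) in Lemma \ref{lem:when-d_M(K)=00003D1}: given an inner $\varphi \in H^\infty$ and $f \in \mathcal{M}$ with $\varphi f \in \mathcal{K}$, I note that $\varphi f \in \mathcal{Q}$; since $1/\varphi \in \mathfrak{D}$ and $\mathcal{Q}$ is a $\mathfrak{D}$-vector subspace, $f = (1/\varphi)(\varphi f) \in \mathcal{Q}$, and $f \in \mathcal{M}$ is already at hand, so $f \in \mathcal{Q} \cap \mathcal{M} = \mathcal{K}$ and $d_\mathcal{M}(\mathcal{K}) \equiv 1$.

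For the converse, I put $\mathcal{Q} = \mathfrak{D}\mathcal{K}$ and $\tilde{\mathcal{K}} = \mathcal{Q} \cap \mathcal{M}$; the forward direction guarantees that $\tilde{\mathcal{K}}$ is a closed $V$-invariant subspace with $d_\mathcal{M}(\tilde{\mathcal{K}}) \equiv 1$, and trivially $\mathcal{K} \subseteq \tilde{\mathcal{K}}$. Let $r$ denote the rank of $\mathcal{K}$; since the Beurling-Lax-Halmos representation $\mathcal{K} = M_\Psi[H^2 \otimes \mathbb{C}^r]$ extends injectively to give $\mathfrak{D}\mathcal{K} = \Psi \mathfrak{D}^r$, the rank of any invariant subspace equals the $\mathfrak{D}$-dimension of its $\mathfrak{D}$-span. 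The chain $\mathcal{Q} = \mathfrak{D}\mathcal{K} \subseteq \mathfrak{D}\tilde{\mathcal{K}} \subseteq \mathfrak{D}\mathcal{Q} = \mathcal{Q}$ forces $\mathfrak{D}\tilde{\mathcal{K}} = \mathcal{Q}$, so $\tilde{\mathcal{K}}$ has rank $r$ as well. The multiplicativity identity $d_\mathcal{M}(\mathcal{K}) \equiv d_\mathcal{M}(\tilde{\mathcal{K}}) \cdot d_{\tilde{\mathcal{K}}}(\mathcal{K})$ then gives $d_{\tilde{\mathcal{K}}}(\mathcal{K}) \equiv 1$.

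Since $V | \tilde{\mathcal{K}}$ is a shift of multiplicity $r$, I identify $\tilde{\mathcal{K}}$ unitarily with $H^2 \otimes \mathbb{C}^r$; under this identification $\mathcal{K}$ is a rank-$r$ invariant subspace, hence of the form $\Phi(H^2 \otimes \mathbb{C}^r)$ for a square $r \times r$ inner matrix $\Phi$ whose invariant factors $\psi_1, \ldots, \psi_r$ satisfy $\psi_1 \cdots \psi_r \equiv d_{\tilde{\mathcal{K}}}(\mathcal{K}) \equiv 1$. Because each $\psi_i$ is inner, each must be a unimodular constant, so the Jordan model $\bigoplus_i S(\psi_i)$ of $S(\Phi)$ acts on the zero space; quasisimilarity then yields $\mathcal{H}(\Phi) = \{0\}$ and hence $\mathcal{K} = \Phi(H^2 \otimes \mathbb{C}^r) = \tilde{\mathcal{K}}$. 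The main obstacle, in my estimation, is precisely this reduction: a direct argument that $f \in (\mathfrak{D}\mathcal{K}) \cap \mathcal{M}$ implies $f \in \mathcal{K}$ reduces via Lemma \ref{lem:when-d_M(K)=00003D1} to the case $\eta(V) f \in \mathcal{K}$ with $\eta$ outer (coming from the outer part of the common denominator in $\mathfrak{D}\mathcal{K}$), and the inner-function divisibility machinery of Lemma \ref{lem:when-d_M(K)=00003D1} no longer applies. The detour through rank comparison with $\tilde{\mathcal{K}}$, bootstrapping off the forward direction, sidesteps this delicate outer-function issue entirely.
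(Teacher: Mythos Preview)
Your forward direction matches the paper's argument essentially verbatim. The converse, however, is genuinely different. The paper argues directly: given $h\in(\mathfrak{D}\mathcal{K})\cap\mathcal{M}$, it writes $h=(\varphi/\psi)k$ with $k\in\mathcal{K}$, factors $\psi=\psi_1\psi_2$ into inner and outer parts, uses Lemma~\ref{lem:when-d_M(K)=00003D1} to conclude $\psi_2 h\in\mathcal{K}$, and then disposes of the outer factor in one stroke by choosing $u_n\in H^\infty$ with $u_n\psi_2\to1$ weak{*}, so that $u_n\psi_2 h\to h$ in $H^2\otimes\mathbb{C}^N$ and $h\in\mathcal{K}$ by closedness. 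So the ``delicate outer-function issue'' you anticipated is in fact handled by a two-line approximation argument.

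Your structural detour---bootstrapping the forward direction to get $d_{\mathcal{M}}(\tilde{\mathcal{K}})\equiv1$, matching ranks via $\mathfrak{D}$-dimension, and then forcing $\mathcal{K}=\tilde{\mathcal{K}}$ from $d_{\tilde{\mathcal{K}}}(\mathcal{K})\equiv1$---is correct and rather elegant; it trades the analytic approximation step for a purely algebraic one. One small point: the multiplicativity identity $d_{\mathcal{M}}(\mathcal{K})\equiv d_{\mathcal{M}}(\tilde{\mathcal{K}})\,d_{\tilde{\mathcal{K}}}(\mathcal{K})$ is stated in the paper only for subspaces of full rank $N$. It does hold in your setting (both of rank $r$) via Cauchy--Binet applied to the maximal minors, but you should say so. Alternatively, you can bypass multiplicativity entirely: since $d_{\mathcal{M}}(\mathcal{K})\equiv1$, condition~(2) of Lemma~\ref{lem:when-d_M(K)=00003D1} fails for $\mathcal{K}$ inside $\mathcal{M}$, hence a fortiori inside $\tilde{\mathcal{K}}\subset\mathcal{M}$, giving $d_{\tilde{\mathcal{K}}}(\mathcal{K})\equiv1$ directly.
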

\begin{proof}
We assume without loss of generality that $V=S\otimes I_{\mathbb{C}^{N}}$
so that 
\[
\mathfrak{D}\mathcal{M}=\mathfrak{D}\otimes\mathbb{C}^{N}=\left\{ \sum_{j=1}^{N}u_{j}\otimes e_{j}:u_{1},u_{2},\dots,u_{N}\in\mathfrak{D}\right\} .
\]
The vector space $\mathcal{Q}$ is defined by a finite number of linear
equations of the form
\begin{equation}
\sum_{j=1}^{N}\alpha_{j}u_{j}=0,\label{eq:linear-combo}
\end{equation}
with coefficients $\alpha_{j}\in\mathfrak{D}$. The solution set of
such an equation is not modified if we multiply all the coefficients
by the same function in $H^{\infty}\setminus\{0\}$. We can thus assume
that $\alpha_{j}\in H^{\infty}$ for all $j$. It follows that $\mathcal{\mathcal{K}=Q}\cap(H^{2}\otimes\mathbb{C}^{N})$
consists of those vectors $\sum_{j=1}^{N}u_{j}\otimes e_{j}$ for
which the functions $u_{j}\in H^{2}$ satisfy all the equations (\ref{eq:linear-combo})
defining $\mathcal{Q}$, an therefore $\mathcal{K}$ is indeed closed.
Finally, $\mathcal{K}$ is easily seen not to satisfy property (2)
of Lemma \ref{lem:when-d_M(K)=00003D1}.

Assume now that $d_{\mathcal{M}}(\mathcal{K})\equiv1$ for some invariant
space $\mathcal{K}\subset H^{2}\otimes\mathbb{C}^{N}$. Clearly, $\mathcal{K}\subset(\mathfrak{D}\mathcal{K})\cap\mathcal{M}$,
so we it suffices prove the opposite inclusion. Consider a vector
$h\in(\mathfrak{D}\mathcal{K})\cap\mathcal{M}$, so there exist $\varphi\in H^{\infty}$,
$\psi\in H^{\infty}\setminus\{0\}$, and $k\in\mathcal{K}$ such that
$h=(\varphi/\psi)k$. Factor $\psi=\psi_{1}\psi_{2}$, where $\psi_{1}$
is inner and $\psi_{2}$ is outer. We have
\[
\psi_{1}\psi_{2}h=\psi h=\varphi k\in\mathcal{K},
\]
and therefore $\psi_{2}h\in\mathcal{K}$ by Lemma \ref{lem:when-d_M(K)=00003D1}.
Since $\psi_{2}$ is outer, there exists a sequence $\{u_{n}\}_{n=1}^{\infty}\subset H^{\infty}$
such that $\lim_{n\to\infty}u_{n}\psi_{2}=1$ in the weak{*} topology
of $H^{\infty}$. This implies that $\lim_{n\to\infty}u_{n}\psi_{2}h=h$
in $H^{2}\otimes\mathbb{C}^{N}$, and therefore $h\in\mathcal{K}$. 
\end{proof}
The following result is useful when we want to replace a linear combination
with coefficients in $\mathfrak{D}$ into another one with coefficients
in $H^{\infty}$. It is a stronger property which cyclic vectors for
an operator of class $C_{0}$ have.
\begin{lem}
\label{lem:vectors in the space of a cyclic operator}Consider an
operator $T$ of class $C_{0}$ on a Hilbert space $\mathcal{H}$,
a cyclic vector $h\in\mathcal{H}$ for $T$, and an inner function
$\omega$. For every vector $k\in\mathcal{H}$ there exist functions
$\alpha,\beta\in H^{\infty}$ such that $\beta\wedge\omega\equiv1$
and $\alpha(T)h=\beta(T)k$.\end{lem}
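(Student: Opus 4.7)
The plan is to pass to the Jordan model of $T$. Since $h$ is cyclic, Theorem \ref{thm:Jordan-exists} provides a quasi-affinity $Y\colon \mathcal{H} \to \mathcal{H}(m_T)$ intertwining $T$ and $S(m_T)$; setting $h' = Yh$ and $k' = Yk$ gives vectors in $\mathcal{H}(m_T) \subset H^2$ with $h'$ cyclic for $S(m_T)$, which is to say that the inner factor of $h'$ in its $H^2$ inner-outer decomposition is coprime to $m_T$. Injectivity of $Y$ reduces the identity $\alpha(T)h = \beta(T)k$ to the congruence $\alpha h' \equiv \beta k' \pmod{m_T H^2}$ in $H^2$.

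In this model, I would use Smirnov-class representations $h' = h_1/h_2$ and $k' = k_1/k_2$ with $h_1, h_2, k_1, k_2 \in H^\infty$ and $h_2, k_2$ outer, and seek $\alpha, \beta$ of the form $\alpha = h_2 \alpha'$ and $\beta = k_2 \beta_0$ with $\alpha', \beta_0 \in H^\infty$. Under this substitution the congruence reduces to the $H^\infty$-level identity $\alpha' h_1 \equiv \beta_0 k_1 \pmod{m_T H^\infty}$, while the condition $\beta \wedge \omega \equiv 1$ reduces (since $k_2$ is outer) to requiring only that the inner factor of $\beta_0$ be coprime to $\omega$. Because the inner factor of $h_1$ remains coprime to $m_T$ and its outer factor is outer in $H^\infty$, the image $\bar{h_1}$ in $R = H^\infty / m_T H^\infty$ is a non-zero-divisor.

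The main step is therefore to produce $\beta_0 \in H^\infty$, whose inner factor is coprime to $\omega$, such that $\beta_0 k_1 \in h_1 H^\infty + m_T H^\infty$ in $H^\infty$. I would accomplish this by performing a primary factorization $m_T = \gamma \delta$ with $\gamma \wedge \delta \equiv 1$, $\gamma \wedge \omega \equiv 1$, and $\delta$ gathering those inner-function factors of $m_T$ which share Blaschke zeros or singular support with $\omega$ (the construction is parallel to Lemma \ref{lem:achieving-total order}). Proposition \ref{prop:primary-decomposition} yields a quasidirect decomposition of $\mathcal{H}$ that separates the problem into a $\gamma$-primary piece, on which the minimal function is already coprime to $\omega$ and an outer $\beta_0$ produced by a further Smirnov argument suffices, and a $\delta$-primary piece, on which the fact that all relevant inner factors lie inside $\omega$ yields the appropriate $\beta_0$ with controlled inner factor.

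The principal obstacle is the failure of Bezout's identity in $H^\infty$: the coprimality $h_1 \wedge m_T \equiv 1$ does not imply $h_1 H^\infty + m_T H^\infty = H^\infty$, so $\bar{h_1}$ need not be a unit in $R$ and one cannot simply invert it. The primary-decomposition step above, together with the flexibility of choosing $\beta_0$'s inner factor coprime only to $\omega$ rather than to all of $m_T$, is what circumvents this obstruction and completes the argument.
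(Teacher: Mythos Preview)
Your reduction to the model $S(m_T)$ is correct, and the congruence you arrive at can be solved immediately: taking $\alpha' = k_1$ and $\beta_0 = h_1$ gives $\alpha' h_1 = \beta_0 k_1$ as an exact identity in $H^\infty$. (This is the common-denominator trick: with $\gamma = h_2 k_2$ outer one has $h' = (h_1 k_2)/\gamma$ and $k' = (k_1 h_2)/\gamma$, so $\alpha = k_1 h_2$, $\beta = h_1 k_2$ satisfy $\alpha h' = \beta k'$ in $H^2$, which is exactly how the paper sets things up.) The resulting $\beta$ has inner part equal to that of $h_1$, hence $\beta \wedge m_T \equiv 1$ by cyclicity of $h'$. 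The entire difficulty lies in passing from $\beta \wedge m_T \equiv 1$ to $\beta \wedge \omega \equiv 1$.

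Here your primary-decomposition plan has a genuine gap. The obstruction is the inner part of $h_1$, which is coprime to $m_T$ and therefore invisible to any factorization $m_T = \gamma\delta$: splitting $\mathcal H$ along $\gamma$ and $\delta$ does nothing to separate the inner factor of $h_1$ from $\omega$. On the $\gamma$-piece you assert that ``an outer $\beta_0$ produced by a further Smirnov argument suffices'' without giving one, and the Bezout failure you flagged is still in force there. On the $\delta$-piece you need the inner part of $\beta_0$ coprime to $\omega$ while working modulo a $\delta$ built from the factors shared with $\omega$, which makes the task harder rather than easier. Neither claim is substantiated, and I do not see how to complete the argument along these lines.

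The paper closes this gap with a perturbation rather than a decomposition: since $m_T(T) = 0$, replacing $\beta$ by $\beta + t\, m_T$ for a scalar $t$ leaves $\beta(T)k$ unchanged, and a standard result (\cite[Theorem III.1.14]{C0-book}) guarantees that $(\beta + t\, m_T) \wedge \omega \equiv 1$ for $t$ in a dense $G_\delta$ subset of $\mathbb{C}$, using only $\beta \wedge m_T \equiv 1$. This single step replaces your entire primary-decomposition program.
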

\begin{proof}
Let $S(\theta)$ be the Jordan model of $T$, and let $X:\mathcal{H}\to\mathcal{H}(\theta)$
be an injective operator with dense range such that $XT=S(\theta)X$.
If $\alpha,\beta\in H^{\infty}$, we have
\[
\alpha(S(\theta))Xh=X\alpha(T)h,\quad\beta(S(\theta))Xk=X\beta(T)k.
\]
Th equality $\alpha(T)h=\beta(T)k$ is equivalent to 
\begin{equation}
\alpha(S(\theta))Xh=\beta(S(\theta))Xk\label{eq:an equation}
\end{equation}
because $X$ is one-to-one. Since $Xh$ is a cyclic vector for $S(\theta)$,
this observation shows that it suffices to prove the lemma when $T=S(\theta)$.
In this case the functions $h,k\in H^{2}$ can be written as $h=\beta/\gamma$
and $k=\alpha/\gamma$ for some functions $\alpha,\beta,\gamma\in H^{\infty}$
such that $\gamma$ is outer. The equation (\ref{eq:an equation})
is satisfied with this choice of $\alpha$ and $\beta$. Moreover,
the fact that $h$ is a cyclic vector for $S(\theta)$ amounts to
the equality $\beta\wedge\theta\equiv1$. Equation (\ref{eq:an equation})
remains valid if $\beta$ is replaced by $\beta+t\theta$ for some
scalar $t\in\mathbb{C}$. It is known \cite[Theorem III.1.14]{C0-book}
that for $t$ in a dense $G_{\delta}$ set, we have $(\beta+t\theta)\wedge\omega\equiv1$.
The lemma follows.
\end{proof}
We continue with one useful result from intersection theory. Consider
an arbitrary field $\mathfrak{D}$ and a vector space $L$ over $\mathfrak{D}$
of dimension $N<+\infty$. Given an integer $r\in\{1,2,\dots,N\}$,
the \emph{Grassmann variety} $G(L,r)$ consists of all vector subspaces
$M\subset L$ of dimension $r$. A \emph{complete flag} in $L$ is
a collection
\[
\mathcal{E}=\{\mathcal{E}_{0}\subset\mathcal{E}_{1}\subset\cdots\subset\mathcal{E}_{N}\}
\]
of subspaces of $L$ such that $\dim_{\mathfrak{D}}(\mathcal{E}_{i})=i$
for $i=0,1,\dots,N$. Assume that $\mathcal{E}$ is a complete flag
and 
\[
I=\{i_{1}<i_{2}<\cdots<i_{r}\}
\]
 is a subset of $\{1,2,\dots,N\}$. The \emph{Schubert variety} $\mathfrak{S}(\mathcal{E},I)$
is the subset of $G(L,r)$ consisting of those subspaces $M$ for
which
\[
\dim_{\mathfrak{D}}(M\cap\mathcal{E}_{i_{x}})\ge x,\quad x=1,2,\dots,r.
\]
The Littlewood-Richardson rule provides a test to determine whether
the intersection of three Schubert varieties $\mathfrak{S}(\mathcal{E},I)$,
$\mathfrak{S}(\mathcal{F},J)$, and $\mathfrak{S}(\mathcal{G},K)$
is nonempty. More precisely, assume that the three sets $I,J,K\subset\{1,2,\dots,N\}$
satisfy the equation
\[
\sum_{x=1}^{r}(i_{x}+j_{x}+k_{x}-3x)=2r(N-r).
\]
Then one associates to the triple $(I,J,K)$ a nonnegative integer
$c_{IJK}$ with the property that
\begin{equation}
\mathfrak{S}(\mathcal{E},I)\cap\mathfrak{S}(\mathcal{F},J)\cap\mathfrak{S}(\mathcal{G},K)\label{eq:intersection-of-3-Schubert-cells}
\end{equation}
 contains generically $c_{IJK}$ elements in case $\mathfrak{D}$
is algebraically closed. For general $\mathfrak{D}$, one can still
state that the intersection (\ref{eq:intersection-of-3-Schubert-cells})
contains at least one element when $c_{IJK}=1$; see \cite{bcdlt}.

\section{Invariant subspaces related to Schubert varieties\label{sec:Invariant-subspaces-vs-Schubert}}

In this section we fix an operator $T$ of class $C_{0}$ with finite
defect numbers. As seen above, we can assume that $T=S(\Theta)$,
where $\Theta$ is an inner function with values $N\times N$ matrices.
The Jordan model of $T$ has the form $\bigoplus_{n=1}^{N}S(\theta_{n})$,
where it may happen that the last few functions $\theta_{n}$ are
constant. Denote by $h_{1},h_{2},\dots,h_{N}$ a $C_{0}$-basis for
$T$, where $h_{n}=0$ if $\theta_{n}$ is constant. We study invariant
subspaces for $T$ of the form $[P_{\mathcal{H}(\Theta)}\mathcal{R}]^{-}$,
where $\mathcal{R}\subset H^{2}\otimes\mathbb{C}^{N}$ is an invariant
subspace for $S\otimes I_{\mathbb{C}^{N}}$ such that $d(\mathcal{R})=1$.
For the remainder of the paper, $\mathfrak{D}$ denotes the field
of fractions of $H^{\infty}$.
\begin{lem}
\label{lem:First preliminary for large space}Fix $m,r\in\{1,2,\dots,N\}$
such that $r\le m$. Assume that $\mathcal{Q}\subset\mathfrak{D}\otimes\mathbb{C}^{N}$
is a $\mathfrak{D}$-vector space of dimension $r$ contained in the
$\mathfrak{D}$-linear span of $\{h_{1},h_{2},\dots,h_{m}\}$, set
$\mathcal{R}=\mathcal{Q}\cap(H^{2}\otimes\mathbb{C}^{N})$, and $\mathcal{K}=[P_{\mathcal{H}(\Theta)}\mathcal{R}]^{-}$.
If the Jordan model of $T|\mathcal{K}$ is $\bigoplus_{n=1}^{r}S(\alpha_{n})$
then $\theta_{m}|\alpha_{r}$.\end{lem}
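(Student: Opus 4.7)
The plan is to reduce the divisibility $\theta_m\mid\alpha_r$ to a matrix-coefficient argument, using a $C_0$-basis of $T|\mathcal{K}$ pulled back to $\mathcal{R}$ via $P_{\mathcal{H}(\Theta)}$. By Proposition~\ref{prop:theta-from-multip} applied to $T$ with $n=m-1$ and $\varphi=\alpha_r$, the required divisibility is equivalent to the cyclic-multiplicity bound $\mu_{T|\overline{\alpha_r(T)\mathcal{H}}}\le m-1$. Rather than attempt that bound directly, I manufacture matrix coefficients linking $\alpha_r$ to the $\theta_j$'s for $j\le m$.

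By Theorem~\ref{thm:quasi-dir}, decompose $\mathcal{K}=\mathcal{K}_1\vee\cdots\vee\mathcal{K}_r$ as a quasidirect sum with $T|\mathcal{K}_n\sim S(\alpha_n)$. Proposition~\ref{prop:dense-vectors} produces a cyclic vector $k_n=P_{\mathcal{H}(\Theta)}r_n$ for each $\mathcal{K}_n$, with $r_n\in\mathcal{R}$. Using the corollary following Lemma~\ref{lem:when-d_M(K)=00003D1} (applicable because Lemma~\ref{lem:vector-space-intersect-H2} gives $d(\mathcal{R})\equiv1$), the $r_n$'s can be arranged to be $H^\infty$-linearly independent, hence $\mathfrak{D}$-linearly independent. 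The $k_n$'s then form a $C_0$-basis of $T|\mathcal{K}$, so $\alpha_n(T)k_n=0$, equivalently $\alpha_n r_n\in M_\Theta(H^2\otimes\mathbb{C}^N)$. Writing $r_n=\sum_{j=1}^m b_{n,j}h_j$ with $b_{n,j}\in\mathfrak{D}$ (by hypothesis) and clearing a common denominator $\varphi\in H^\infty\setminus\{0\}$, one has $\varphi r_n=\sum_{j=1}^m u_{n,j}h_j$ with $u_{n,j}\in H^\infty$.

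Since $h_j=P_{\mathcal{H}(\Theta)}y_j$ agrees with $y_j$ modulo $M_\Theta(H^2\otimes\mathbb{C}^N)$, the inclusion $\varphi\alpha_n r_n\in M_\Theta(H^2\otimes\mathbb{C}^N)$ becomes $\sum_j\alpha_n u_{n,j}y_j\in M_\Theta(H^2\otimes\mathbb{C}^N)$, and Lemma~\ref{lem:troubling-lemma-1} delivers the divisibilities $\theta_j\mid\alpha_n u_{n,j}$ for every $n\in\{1,\ldots,r\}$ and $j\in\{1,\ldots,m\}$. The $\mathfrak{D}$-linear independence of the $r_n$'s forces the $r\times m$ matrix $U=[u_{n,j}]$ to have rank $r$, so there exists $\{j_1<\cdots<j_r\}\subset\{1,\ldots,m\}$ with $D=\det[u_{n,j_k}]_{n,k}\ne0$. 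Expanding $D$ by the Leibniz formula and applying the entry-wise divisibilities yields $\prod_k\theta_{j_k}\mid\prod_n\alpha_n\cdot D$. To separate this aggregated relation into $\theta_{j_r}\mid\alpha_r$, I invoke Corollary~\ref{cor:base change} (together, if need be, with the primary decomposition of Proposition~\ref{prop:primary-decomposition} to localize on a single inner prime) to transform $\{k_n\}$ — and hence $U$ — into a basis for which the matrix is upper triangular on the chosen columns and the product divisibility splits into its individual factors. Since $j_r\le m$ forces $\theta_m\mid\theta_{j_r}$, the resulting $\theta_{j_r}\mid\alpha_r$ yields $\theta_m\mid\alpha_r$.

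The main obstacle is this separation step: extracting the individual divisibility $\theta_{j_r}\mid\alpha_r$ from the product relation $\prod_k\theta_{j_k}\mid\prod_n\alpha_n\cdot D$. This is the combinatorial heart of the Horn-type arguments; after a primary decomposition the inner functions are totally ordered by divisibility, and the extraction reduces to a careful sorting of the orders of $\alpha_n$ and $\theta_j$ under the constraints $\alpha_{n+1}\mid\alpha_n$ and $\theta_{j+1}\mid\theta_j$, combined with the triangularization permitted by Corollary~\ref{cor:base change}.
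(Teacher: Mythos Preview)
Your argument has a genuine gap at the final ``separation step,'' and the material you invoke does not close it. From the entrywise divisibilities $\theta_j\mid\alpha_n u_{n,j}$ you only extract the aggregated relation
\[
\prod_{k=1}^{r}\theta_{j_k}\ \Big|\ \Big(\prod_{n=1}^{r}\alpha_n\Big)\cdot D,
\]
where $D$ is merely some nonzero $r\times r$ minor of $U$. But $D$ is an arbitrary nonzero element of $H^\infty$ and may carry any inner factor whatsoever; nothing in your construction forces $D\wedge\theta_m\equiv1$. Consequently the product relation places no constraint on the $\alpha_n$'s at all (take $D$ divisible by $\prod_k\theta_{j_k}$ and the relation is vacuous). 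The vague appeal to Corollary~\ref{cor:base change} and a primary decomposition does not help: a change of $C_0$-basis for $T|\mathcal{K}$ replaces $U$ by $AU$ with $A$ a square matrix over $H^\infty$ whose determinant is coprime to $\theta_1$, which changes $D$ by a factor coprime to $\theta_1$ but does nothing to control the inner content already present in $D$. Triangularizing the \emph{columns} of $U$ (which is what you would need) is not a base change of $T|\mathcal{K}$ at all; it would require operations on the $h_j$'s, not on the $k_n$'s. In short, you have identified the hard step and then skipped it.

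The paper avoids this obstacle entirely by a different, direct mechanism. First it reduces to the case $m=N$ by passing to the compression $T'$ built from $\mathcal{E}_+=\mathcal{E}_m\cap(H^2\otimes\mathbb{C}^N)$, where $T'\sim\bigoplus_{n=1}^m S(\theta_n)$. In the case $m=N$ one has the crucial structural fact that $\theta_N$ divides every entry of $\Theta$. Representing $\mathcal{R}=M_\Psi[H^2\otimes\mathbb{C}^r]$ and using the quasiequivalence $\Psi X=YJ$ with $\det Y\wedge\theta_N\equiv1$, the columns $y_1,\dots,y_r$ of $Y$ satisfy: whenever $\sum_n u_n(T)P_{\mathcal{H}(\Theta)}y_n=0$ one has $\theta_N\mid u_n$ for every $n$. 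This immediately produces inside $\mathcal{K}$ an invariant subspace on which $T$ is quasisimilar to $S(\theta_N)^{\oplus r}$, giving $\theta_N\mid\alpha_r$ directly. The point is that the coprimality with $\theta_N$ is built in through the quasiequivalence and the scalar divisibility $\theta_N\mid\Theta$; your determinant $D$ has no such coprimality, and that is precisely why your route stalls.
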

\begin{proof}
Observe that $(S\otimes I_{\mathbb{C}^{N}})|\mathcal{R}$ has cyclic
multiplicity $r$. Projecting onto $\mathcal{K}$ a cyclic set for
$(S\otimes I_{\mathbb{C}^{N}})|\mathcal{R}$ yields a cyclic set for
$T|\mathcal{K}$, and therefore $\mu_{T|\mathcal{R}}\le r$. Thus,
indeed, the Jordan model of $T|\mathcal{R}$ consists of at most $r$
summands. The lemma is vacuously satisfied if $\theta_{m}\equiv1$,
so we assume that is not the case. We show next that it suffices to
prove the lemma in the special case $m=N$. Denote by $\mathcal{E}$
the $\mathfrak{D}$-linear span of $\{h_{1},h_{2},\dots,h_{m}\}$,
and set 
\[
\mathcal{E}_{+}=\mathcal{E}\cap(H^{2}\otimes\mathbb{C}^{N}),\quad V=(S\otimes I_{\mathbb{C}^{N}})|\mathcal{E}_{+}.
\]
The operator $A=P_{\mathcal{H}(\Theta)}|\mathcal{E}_{+}$ satisfies
the intertwining relation
\[
AV=TA,
\]
and therefore the restriction $B=A|\mathcal{H}'$, where $\mathcal{H}'=\mathcal{E}_{+}\ominus\ker(A)$,
intertwines the compression $T'$ of $V$ to $\mathcal{H}'$ and $T$,
that is
\[
BT'=TB.
\]
Moreover, $B$ is one-to-one and its range contains $h_{1},h_{2},\dots,h_{m}$.
It is easy to see now that $T'\sim\bigoplus_{n=1}^{m}S(\theta_{n})$.
Indeed, $T'$ is an operator of class $C_{0}(m)$ so its Jordan model
contains at most $m$ summands $S(\varphi_{n})$, and $\varphi_{n}|\theta_{n}$
because $T'$ is quasisimilar to a restriction of $T$. On the other
hand, $\theta_{n}|\varphi_{n}$, $n=1,2,\dots,m$, because the restriction
of $T$ to its invariant subspace generated by $\{h_{1},h_{2},\dots,h_{m}\}$
is quasisimilar to a restriction of $T'$. Set now $\mathcal{R}'=\mathcal{Q}\cap\mathcal{E}_{+}$
and $\mathcal{K}'=[P_{\mathcal{H}'}\mathcal{R}']^{-}$. We have $B\mathcal{K}'\subset\mathcal{K}$
and therefore the Jordan model $\bigoplus_{n=1}^{r}S(\alpha'_{n})$
of $T'|\mathcal{K}'$ satisfies $\alpha'_{n}|\alpha_{n}$ for $n=1,2,\dots,r$.
It suffices therefore to show that $\theta_{m}|\alpha_{r}'$. This
concludes our reduction to the case $m=N$.

Represent the space $\mathcal{R}$ as
\[
\mathcal{R}=M_{\Psi}[H^{2}\otimes\mathbb{C}^{r}]
\]
for some inner $N\times r$ matrix $\Psi$. As noted in Lemma \ref{lem:vector-space-intersect-H2},
$\Psi$ is quasiequivalent to the constant matrix $J$ representing
the inclusion $\mathbb{C}^{r}\subset\mathbb{C}^{N}$. Fix square matrices
$X,Y$ over $H^{\infty}$ such that
\[
\Psi X=YJ
\]
and $\det(X)\wedge\theta_{N}\equiv\det(Y)\wedge\theta_{N}\equiv1$,
and denote by $y_{n}=Y(1\otimes e_{n})\in\mathcal{R}$, $n=1,2,\dots,r$,
the columns of $Y$. We claim that, given functions $u_{1},u_{2},\dots,u_{r}\in H^{\infty}$
such that
\[
\sum_{n=1}^{r}u_{n}(T)P_{\mathcal{H}(\Theta)}y_{n}=0,
\]
it follows that $\theta_{N}|u_{n}$ for $n=1,2,\dots,r$. Indeed,
the relation above is equivalent to $\sum_{n=1}^{r}u_{n}y_{n}\in M_{\Theta}[H^{2}\otimes\mathbb{C}^{N}]$.
We use now the fact that $\theta_{N}$ divides all the entries of
$\Theta$ to conclude that
\[
\frac{1}{\theta_{N}}YJ\left(\sum_{n=1}^{r}u_{n}\otimes e_{n}\right)=\frac{1}{\theta_{N}}\sum_{n=1}^{r}u_{n}y_{n}\in H^{2}\otimes\mathbb{C}^{N}.
\]
 Multiplying by $Y^{{\rm Ad}}$, we see that
\[
\frac{\det(Y)}{\theta_{N}}\sum_{n=1}^{r}u_{n}\otimes e_{n}\in H^{2}\otimes\mathbb{C}^{r}.
\]
The relation $\theta_{N}|u_{n}$ follows because $\det(Y)\wedge\theta_{N}\equiv1$.
The conclusion of the lemma follows by showing that $T|\mathcal{K}$
has a restriction quasisimilar to 
\[
\underbrace{S(\theta_{N})\oplus S(\theta_{N})\oplus\cdots\oplus S(\theta_{N})}_{r\text{ times}}.
\]
Indeed, denote by $\psi_{n}$ an inner function satisfying
\[
\psi_{n}H^{\infty}=\{\psi\in H^{\infty}:\psi(T)P_{\mathcal{H}(\Theta)}y_{n}=0\},\quad n=1,2,\dots,r.
\]
 The invariant subspace $\mathcal{K}'\subset\mathcal{K}$ for $T$
generated by the vectors 
\[
k_{n}=(\psi_{n}/\theta_{N})(T)P_{\mathcal{H}(\Theta)}y_{n},\quad n=1,2\dots,r,
\]
is such that
\[
T|\mathcal{K}'\sim\underbrace{S(\theta_{N})\oplus S(\theta_{N})\oplus\cdots\oplus S(\theta_{N})}_{r\text{ times}}.
\]
This is seen by noting that $k_{1},k_{2},\dots,k_{r}$ for a $C_{0}$-basis
for $T|\mathcal{K}'$.\end{proof}
\begin{lem}
\label{lem:preliminary to BIG space propo}Fix $m,p,r\in\{1,2,\dots,N\}$
with $r\le p\le N-m$, let $\mathcal{Q}\subset\mathfrak{D}\otimes\mathbb{C}^{N}$
be a subspace of dimension $r$, and let $z_{1},z_{2},\dots,z_{r}\in\mathcal{Q}$
be given by
\[
z_{\ell}=\sum_{n=1}^{m+p}u_{\ell,n}h_{n},\quad\ell=1,2,\dots,r,
\]
where all the coefficients $u_{\ell,n}$ are in $H^{\infty}$. Set
$\mathcal{R}=\mathcal{Q}\cap(H^{2}\otimes\mathbb{C}^{N})$, and $\mathcal{K}=[P_{\mathcal{H}(\Theta)}\mathcal{R}]^{-}$.
Assume that:
\begin{enumerate}
\item $\theta_{m+1}\equiv\theta_{m+2}\equiv\cdots\equiv\theta_{m+p}$ and
$\theta_{m}/\theta_{m+1}$ is not constant.
\item \label{enu:some condition}Every nonconstant inner factor $\omega$
of $\theta_{m+1}$ satisfies $\omega\wedge(\theta_{m}/\theta_{m+1})\not\equiv1$.
\item The Jordan model of $T|\mathcal{K}$ is 
\[
\underbrace{S(\theta_{m+1})\oplus S(\theta_{m+1})\oplus\cdots\oplus S(\theta_{m+1})}_{r\text{ times}}.
\]

\item \label{enu:basis assumption}The vectors $w_{\ell}=P_{\mathcal{H}(\Theta)}z_{\ell}$,
$\ell=1,2,\dots,r$, form a $C_{0}$-basis for $T|\mathcal{K}$.
\item \label{enu:whatever}The set consisting of all inner functions of
the form 
\[
\theta_{n}\wedge\det\begin{bmatrix}u_{1,j_{1}} & u_{1,j_{2}} & \cdots & u_{1,j_{s}}\\
u_{2,j_{1}} & u_{2,j_{2}} & \cdots & u_{2,j_{s}}\\
\vdots & \vdots & \ddots & \vdots\\
u_{s,j_{1}} & u_{s,j_{2}} & \cdots & u_{s,j_{s}}
\end{bmatrix},
\]
where $s\in\{1,2,\dots,r\}$ and $n,j_{1},j_{2},\dots,j_{s}\in\{1,2\dots,N\}$,
is totally ordered by divisibility.
\end{enumerate}

Then there exist integers $m+1\le j_{1}<j_{2}<\cdots<j_{r}\le m+p$
such that
\[
\det[u_{\ell,j_{k}}]_{\ell,k=1}^{r}\wedge\theta_{m+1}\equiv1.
\]

\end{lem}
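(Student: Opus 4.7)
The plan is to argue by contradiction. Suppose that for every $r$-subset $\{j_1<\cdots<j_r\}\subset\{m+1,\dots,m+p\}$, the inner function $\theta_{m+1}\wedge\det[u_{\ell,j_k}]_{\ell,k=1}^r$ is nonconstant. By hypothesis~(5) these inner functions form a totally ordered set, with minimum $\delta_0$; thus $\delta_0\not\equiv1$, $\delta_0\mid\theta_{m+1}$, and $\delta_0$ divides every such determinant. Applying (2) with $\omega=\delta_0$, the function $\omega_0:=\delta_0\wedge(\theta_m/\theta_{m+1})$ is nonconstant; since $\theta_m/\theta_{m+1}$ divides $\theta_n/\theta_{m+1}$ for each $n\le m$, we have $\omega_0\mid\theta_n/\theta_{m+1}$ for every such $n$, as well as $\omega_0\mid\theta_{m+1}$.

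From (3) and (4) each $w_\ell$ satisfies $\theta_{m+1}(T)w_\ell=0$; combined with the quasidirect decomposition $\mathcal{H}=\bigvee_n\mathcal{H}_n$, this forces $(\theta_n/\theta_{m+1})\mid u_{\ell,n}$ for $n\le m$. Fix columns $j_1^*<\cdots<j_r^*$ in $\{m+1,\dots,m+p\}$ attaining $\delta_0$, set $A=[u_{\ell,j_k^*}]_{\ell,k=1}^r$, $d=\det A$ (so $\delta_0\mid d$), $B=A^{\mathrm{Ad}}$, and use the Laplace identity to write
\[
\sum_{\ell=1}^r B_{k,\ell}(T)w_\ell=\sum_{n=1}^{m+p}c_{k,n}(T)h_n,\qquad c_{k,n}:=\sum_{\ell}B_{k,\ell}\,u_{\ell,n}.
\]
Case analysis then shows $\omega_0\mid c_{k,n}$ in every case: when $n\in\{m+1,\dots,m+p\}$, $c_{k,n}$ equals $0$ or (up to sign) an $r\times r$ minor of $[u_{\ell,\cdot}]$ with all columns in $\{m+1,\dots,m+p\}$, hence divisible by $\delta_0$; when $n\le m$, the $n$th column of the defining determinant has entries divisible by $\theta_n/\theta_{m+1}$, so $c_{k,n}$ is divisible by $\theta_n/\theta_{m+1}$ and in particular by $\omega_0$. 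Consequently $\sum_{\ell}B_{k,\ell}(T)w_\ell=\omega_0(T)\tilde g_k$ for some $\tilde g_k\in\mathcal{H}$, and in particular lies in $\omega_0(T)\mathcal{H}$.

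The decisive step, and the main obstacle, is to turn this into a contradiction with the $C_0$-basis property. The natural attempt is the test coefficient $e_\ell:=(\theta_{m+1}/\omega_0)B_{k,\ell}$, for which
\[
\sum_{\ell}e_\ell(T)w_\ell=(\theta_{m+1}/\omega_0)(T)\,\omega_0(T)\tilde g_k=\theta_{m+1}(T)\tilde g_k.
\]
If one can show $\theta_{m+1}(T)\tilde g_k=0$, then the $C_0$-basis property yields $\theta_{m+1}\mid(\theta_{m+1}/\omega_0)B_{k,\ell}$, i.e., $\omega_0\mid B_{k,\ell}$ for every $k,\ell$; this means every $(r-1)\times(r-1)$ minor of $A$ is divisible by $\omega_0$. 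Checking $\theta_{m+1}(T)\tilde g_k=0$ reduces, via the quasidirect structure, to showing $(\theta_n\omega_0/\theta_{m+1})\mid c_{k,n}$ for each $n\le m$, a slightly stronger divisibility than the factors already obtained. The substantive work is to secure this stronger divisibility---most naturally by iterating the cofactor identity downward from $r\times r$ to $(r-1)\times(r-1)$ submatrices, or by a more careful use of (5) applied to mixed-column minors involving one column from $\{1,\dots,m\}$. Once it is in hand, the resulting entrywise divisibility of $B$ combines with (1) and an application of Lemma~\ref{lem:First preliminary for large space} to the compression $T|\overline{\omega_0(T)\mathcal{H}}$ (where the Jordan factors shrink by $\omega_0$) to force $\theta_m/\theta_{m+1}\equiv1$, contradicting~(1).
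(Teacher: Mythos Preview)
Your argument has two genuine gaps, and the paper's proof shows how to close both with a single idea you did not use.

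\textbf{The ``obstacle'' is resolved by remembering $\mathcal{Q}$, not by extra divisibilities.} You need $\theta_{m+1}(T)\tilde g_k=0$, and you try to get it by verifying $(\theta_n\omega_0/\theta_{m+1})\mid c_{k,n}$ for $n\le m$. That stronger divisibility need not hold, and it is not needed. The point you overlooked is that the vector
\[
\frac{1}{\omega_0}\sum_{\ell}B_{k,\ell}\,z_\ell \;=\; \sum_{n}\frac{c_{k,n}}{\omega_0}\,h_n
\]
is a $\mathfrak{D}$-linear combination of the $z_\ell\in\mathcal{Q}$, hence lies in $\mathcal{Q}$, and since $\omega_0\mid c_{k,n}$ it also lies in $H^2\otimes\mathbb{C}^N$; thus it belongs to $\mathcal{R}$, and its projection $\tilde g_k$ lies in $\mathcal{K}$. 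Since $T|\mathcal{K}$ has minimal function $\theta_{m+1}$ by hypothesis~(3), $\theta_{m+1}(T)\tilde g_k=0$ follows immediately. This is exactly the mechanism the paper uses: divide the $H^\infty$-linear combination of the $z_\ell$ by the common inner factor, observe the quotient is still in $\mathcal{R}$, project into $\mathcal{K}$, and apply $\theta_{m+1}(T)$.

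\textbf{Your endgame does not yield a contradiction.} Concluding $\omega_0\mid B_{k,\ell}$ for all $k,\ell$ only says $\omega_0$ divides the $(r-1)\times(r-1)$ minors of the single matrix $A$. The appeal to Lemma~\ref{lem:First preliminary for large space} and ``Jordan factors shrink by $\omega_0$'' is not a valid deduction of $\theta_m/\theta_{m+1}\equiv1$. One could salvage your route by iterating the cofactor step from size $s$ to $s-1$ (varying the column choice each time) until one reaches $\omega_0\mid 1$, but you have not done this, and you have not verified the hypotheses persist at each stage.

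The paper avoids both problems in one stroke: let $s$ be the \emph{largest} integer for which some $s\times s$ minor with columns in $\{m+1,\dots,m+p\}$ is coprime to $\theta_{m+1}$, and suppose $s<r$. Fix such columns $j_1,\dots,j_s$ and form the $(s+1)\times(s+1)$ cofactor expansion using rows $1,\dots,s+1$. The cofactor $\alpha_{s+1}$ is the chosen coprime $s\times s$ minor. One checks (using~(2),~(5) and the maximality of $s$) that a single nonconstant inner $\omega$ divides all the resulting coefficients $\beta_n$, then applies the $\mathcal{R}$-trick above and the $C_0$-basis property~(4) to get $\omega\mid\alpha_\ell$ for $\ell=1,\dots,s+1$. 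For $\ell=s+1$ this contradicts $\alpha_{s+1}\wedge\theta_{m+1}\equiv1$. No iteration is required, and no appeal to Lemma~\ref{lem:First preliminary for large space} is made.
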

\begin{proof}
The hypothesis implies that $\theta_{m+1}(T)w_{\ell}=0$ or, equivalently,
\[
\sum_{n=1}^{m+p}\theta_{m+1}(T)u_{\ell,n}(T)h_{n}=0,\quad\ell=1,2,\dots,r.
\]
Since $\{h_{n}\}_{n=1}^{N}$ form a $C_{0}$-basis for $T$, we conclude
that $\theta_{n}|\theta_{m+1}u_{\ell,n}$ for all $n$, in particular
$u_{\ell,n}$ is divisible by $\theta_{m}/\theta_{m+1}$ for $n=1,2,\dots,m$
and $\ell=1,2,\dots,r$. Let $s\in\{0,1,\dots,r\}$ be largest with
the property that there exist 
\[
m+1\le j_{1}<j_{2}<\cdots<j_{s}\le m+p
\]
such that
\[
\det[u_{\ell,j_{m}}]_{\ell,m=1}^{s}\wedge\theta_{m+1}\equiv1,
\]
and assume to get a contradiction that $s<r$. Consider indeterminates
$\xi_{1},\xi_{2},\dots,\xi_{s+1}$ and write the determinant
\[
\det\left[\begin{array}{ccccc}
u_{1,j_{1}} & u_{1,j_{2}} & \cdots & u_{1,j_{s}} & \xi_{1}\\
u_{2,j_{1}} & u_{2,j_{2}} & \cdots & u_{2,j_{s}} & \xi_{2}\\
\vdots & \vdots & \ddots & \vdots & \vdots\\
u_{s,j_{1}} & u_{s,j_{2}} & \cdots & u_{s,j_{s}} & \xi_{s}\\
u_{s+1,j_{1}} & u_{s+1,j_{2}} & \cdots & u_{s+1,j_{s}} & \xi_{s+1}
\end{array}\right]=\sum_{\ell=1}^{s+1}\alpha_{\ell}\xi_{\ell}.
\]
The coefficients $\alpha_{\ell}\in H^{\infty}$ are determinants of
size $s$, and 
\begin{equation}
\alpha_{s+1}\wedge\theta_{m+1}\equiv1.\label{eq:32}
\end{equation}
The vector
\[
z=\sum_{\ell=1}^{s+1}\alpha_{\ell}z_{\ell}=\sum_{n=1}^{m+p}\beta_{n}h_{n}
\]
 has coefficients
\[
\beta_{n}=\sum_{\ell=1}^{s+1}\alpha_{\ell}u_{\ell,n},\quad n=1,2,\dots,N,
\]
which are determinants of order $s+1$. Note that $\beta_{n}=0$ if
$n\in\{j_{1},j_{2},\dots,j_{s}\}$ and $\theta_{m}/\theta_{m+1}$
divides $\beta_{n}$ for $n\in\{1,2,\dots,m\}$. For other values
of $n$, the function $\beta_{n}\wedge\theta_{m+p}$ is not constant
by the definition of $s$. Conditions (\ref{enu:some condition})
and (\ref{enu:whatever}) imply now that there exists a nonconstant
inner function $\omega$ such that $\omega|\beta_{n}\wedge\theta_{m+1}$
for all $n=1,2,\dots,m+p$. Thus the vector $z/\omega$ belongs to
$\mathcal{Q}\cap(H^{2}\otimes\mathbb{C}^{N})$, and therefore $P_{\mathcal{H}(\Theta)}(z/\omega)\in\mathcal{K}$.
This in turn implies that $P_{\mathcal{H}(\Theta)}[(\theta_{m+1}/\omega)z]=\theta_{m+1}(T)P_{\mathcal{H}(\Theta)}(z/\omega)=0$,
so
\[
\frac{\theta_{m+1}}{\omega}z=\sum_{\ell=1}^{s+1}\frac{\theta_{m+1}}{\omega}\alpha_{\ell}z_{\ell}\in M_{\Theta}[H^{2}\otimes\mathbb{C}^{N}],
\]
or
\[
\sum_{\ell=1}^{s+1}(\theta_{m+1}/\omega)(T)\alpha_{\ell}(T)w_{\ell}=0.
\]
Assumption (\ref{enu:basis assumption}) implies that $\theta_{m+1}|(\theta_{m+1}/\omega)\alpha_{\ell}$,
that is, $\omega|\alpha_{\ell}$ for $\ell=1,2,\dots,s+1$. This however
contradicts (\ref{eq:32}) when $\ell=s+1$, thus concluding the proof. 
\end{proof}
Define $\mathcal{E}_{n}$ to be the $\mathfrak{D}$-linear span of
$\{h_{1},h_{2},\dots,h_{n}\}$ when $1\le n\le N$ and $h_{n}\ne0$.
The nonzero vectors $h_{n}$ are linearly independent over $\mathfrak{D}$
because they from a $C_{0}$ basis for $T$, and therefore $\dim_{\mathfrak{D}}\mathcal{E}_{n}=n$.
This sequence of spaces can be supplemented to form a complete flag
$\mathcal{E}$ by defining appropriate spaces $\mathcal{E}_{n}$ of
dimension $n$ when $h_{n}=0$.
\begin{lem}
\label{lem:projection of E_j}Assume that $n\in\{1,2,\dots,$N\} is
such that $\theta_{n}\not\equiv1$. Then 
\[
[P_{\mathcal{H}(\Theta)}(\mathcal{E}_{n}\cap(H^{2}\otimes\mathbb{C}^{N}))]^{-}
\]
 is equal to the smallest invariant subspace for $T$ containing the
vectors $h_{1},h_{2},\dots,h_{n}$.\end{lem}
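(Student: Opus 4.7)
The strategy is to establish the two inclusions between $\mathcal{K} := [P_{\mathcal{H}(\Theta)}(\mathcal{E}_n \cap (H^2 \otimes \mathbb{C}^N))]^-$ and the smallest closed $T$-invariant subspace $\mathcal{M}$ of $\mathcal{H}(\Theta)$ containing $h_1, \dots, h_n$. The inclusion $\mathcal{M} \subseteq \mathcal{K}$ is immediate: each $h_j$ already sits in $\mathcal{H}(\Theta) \subset H^2 \otimes \mathbb{C}^N$, so $P_{\mathcal{H}(\Theta)} h_j = h_j \in \mathcal{K}$; and the intertwining $P_{\mathcal{H}(\Theta)}(S \otimes I_{\mathbb{C}^N}) = T\, P_{\mathcal{H}(\Theta)}$ on $H^2 \otimes \mathbb{C}^N$ shows that $\mathcal{K}$ is $T$-invariant, hence contains $\mathcal{M}$.

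For the reverse inclusion $\mathcal{K} \subseteq \mathcal{M}$, my plan is to compare Jordan models rather than argue vector by vector. A direct attempt goes as follows: given $v \in \mathcal{R} := \mathcal{E}_n \cap (H^2 \otimes \mathbb{C}^N)$, one writes $gv = \sum \alpha_j h_j$ for some $g \in H^\infty \setminus \{0\}$ and $\alpha_j \in H^\infty$, obtaining $g(T)P_{\mathcal{H}(\Theta)}v \in \mathcal{M}$; but peeling off $g$ requires cancelling its inner factor against $\mathcal{M}$, and this inner factor need not be coprime with $m_T$. The comparison approach avoids this entirely. By Theorem \ref{thm:quasi-dir}, the cyclic spaces $\mathcal{H}_j$ of the $h_j$ provide a quasi-direct decomposition of $\mathcal{H}(\Theta)$ with $T|\mathcal{H}_j \sim S(\theta_j)$; the first $n$ summands themselves form a quasi-direct decomposition of their closed span, which is precisely $\mathcal{M}$, so $T|\mathcal{M}$ has Jordan model $\bigoplus_{j=1}^n S(\theta_j)$. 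On the other hand, by Lemma \ref{lem:vector-space-intersect-H2}, $\mathcal{R}$ is a closed $(S \otimes I_{\mathbb{C}^N})$-invariant subspace with $d_{H^2 \otimes \mathbb{C}^N}(\mathcal{R}) \equiv 1$; its rank equals $\dim_{\mathfrak{D}}\mathcal{E}_n = n$, so it has $n$ cyclic generators, whose projections together with the intertwining produce an $n$-element $T$-cyclic set for $\mathcal{K}$. Thus $\mu_{T|\mathcal{K}} \le n$.

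To close the argument I invoke two standard facts about $C_0$ operators of finite multiplicity (both in \cite{C0-book}): (i) if $\mathcal{L}_1 \subseteq \mathcal{L}_2$ are $T$-invariant, the Jordan functions satisfy $\theta_j^{T|\mathcal{L}_1} | \theta_j^{T|\mathcal{L}_2}$ for every $j$; and (ii) the products $\prod_j \theta_j^{T|\cdot}$ are multiplicative across the exact sequence $0 \to \mathcal{L}_1 \to \mathcal{L}_2 \to \mathcal{L}_2/\mathcal{L}_1 \to 0$. Applying (i) to $\mathcal{M} \subseteq \mathcal{K} \subseteq \mathcal{H}(\Theta)$ sandwiches each $\theta_j^{T|\mathcal{K}}$ between $\theta_j$ and $\theta_j$; combined with $\mu_{T|\mathcal{K}} \le n$ and the hypothesis $\theta_n \not\equiv 1$ (which, via $\theta_{j+1} | \theta_j$, guarantees $\theta_j \not\equiv 1$ for every $j \le n$), this forces $T|\mathcal{K} \sim \bigoplus_{j=1}^n S(\theta_j) \sim T|\mathcal{M}$. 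Fact (ii) applied to the inclusion $\mathcal{M} \subseteq \mathcal{K}$ then yields $\prod_j \theta_j^{T\text{ on }\mathcal{K}/\mathcal{M}} \equiv 1$, so every Jordan function of the quotient operator is constant, whence $\mathcal{K}/\mathcal{M} = \{0\}$ and $\mathcal{K} = \mathcal{M}$. The step I expect to require the most care is verifying that the truncated collection $\{\mathcal{H}_j\}_{j=1}^n$ is again quasi-direct in its span, so that $T|\mathcal{M}$ really has the claimed Jordan model; this is the foundation on which the entire comparison rests.
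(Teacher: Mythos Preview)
Your proposal is correct and follows essentially the same approach as the paper's proof: both sandwich the Jordan model of the projected space between that of the cyclic span of $h_1,\dots,h_n$ and that of $T$ itself, then conclude equality via the determinant/multiplicativity identity (the paper encodes this last step as an appeal to Proposition~\ref{prop:basis when multiplicity is finite}). Your residual worry about the truncated quasidirect family is harmless---the intersection condition defining quasidirectness is inherited from the full decomposition, and restricting the quasiaffinity that realizes the original $C_0$-basis to $\bigoplus_{j=1}^n \mathcal{H}(\theta_j)$ gives an injective intertwiner with dense range onto your $\mathcal{M}$, which yields the claimed model.
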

\begin{proof}
Clearly the space $\mathcal{M}=[P_{\mathcal{H}(\Theta)}(\mathcal{E}_{n}\cap(H^{2}\otimes\mathbb{C}^{N}))]^{-}$
contains the vectors $h_{1},h_{2},\dots,h_{n}$. To conclude the proof,
it suffices to show that these vectors form a $C_{0}$-basis for $T|\mathcal{M}$.
The operator $T|\mathcal{M}$ has multiplicity at most $n$, as seen
at the beginning of the proof of Lemma \ref{lem:First preliminary for large space}.
Thus the Jordan model of $T|\mathcal{M}$ is of the form $\bigoplus_{j=1}^{n}S(\varphi_{j})$
with $\varphi_{j}|\theta_{j}$ for $j=1,2,\dots,n$. On the other
hand, $\theta_{j}|\varphi_{j}$ for $j=1,2,\dots,n$ because $\mathcal{M}$
contains the subset $\{h_{1},h_{2},\dots,h_{n}\}$ of the $C_{0}$-basis
of $T$. The conclusion follows immediately from Proposition \ref{prop:basis when multiplicity is finite}.\end{proof}
\begin{prop}
\label{prop:big-submodule}Fix a positive integer $r\le N$, a subset
\[
I=\{i_{1}<i_{2}<\cdots<i_{r}\}\subset\{1,2,\dots,N\},
\]
and a subspace $\mathcal{Q}\in\mathfrak{S}(\mathcal{E},I)$. Set $\mathcal{R}=\mathcal{Q}\cap(H^{2}\otimes\mathbb{C}^{N})$
and $\mathcal{K}=[P_{\mathcal{H}(\Theta)}\mathcal{R}]^{-}$. Then:
\begin{enumerate}
\item The space $\mathcal{K}$ is invariant for $T=S(\Theta)$, and $T|\mathcal{K}$
has cyclic multiplicity less than or equal to $r$.
\item If the Jordan model of $T|\mathcal{K}$ is $S(\beta_{1})\oplus S(\beta_{2})\oplus\cdots\oplus S(\beta_{r})$,
then $\theta_{i_{x}}$ divides $\beta_{x}$ for $x=1,2,\dots,r$.
\item If $\beta_{x}\equiv\theta_{i_{x}}$ for all $x=1,2,\dots,r$, then
$\mathcal{K}$ has a $T$-invariant quasidirect complement $\mathcal{L}$
in $\mathcal{H}(\Theta)$ such that $T|\mathcal{L}\sim\bigoplus_{i\notin I}S(\theta_{i})$.
\end{enumerate}
\end{prop}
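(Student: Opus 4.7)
Part (1) is direct: the Beurling--Lax--Halmos representation $\mathcal{R}=M_{\Psi}[H^{2}\otimes\mathbb{C}^{r}]$ combined with $d(\mathcal{R})\equiv1$ makes $(S\otimes I_{\mathbb{C}^{N}})|\mathcal{R}$ a unilateral shift of multiplicity $r$; since $\mathcal{H}(\Theta)$ is semi-invariant under $S\otimes I_{\mathbb{C}^{N}}$, the restriction $P_{\mathcal{H}(\Theta)}|\mathcal{R}$ is an intertwiner with dense range onto $\mathcal{K}$, so $\mathcal{K}$ is $T$-invariant and $\mu_{T|\mathcal{K}}\le r$. Part (2) combines the Schubert condition with Lemma \ref{lem:First preliminary for large space}: for each $x\in\{1,\ldots,r\}$, use $\dim_{\mathfrak{D}}(\mathcal{Q}\cap\mathcal{E}_{i_{x}})\ge x$ to pick $\mathcal{Q}^{(x)}\subset\mathcal{Q}\cap\mathcal{E}_{i_{x}}$ of dimension $x$, and form $\mathcal{K}^{(x)}=[P_{\mathcal{H}(\Theta)}(\mathcal{Q}^{(x)}\cap(H^{2}\otimes\mathbb{C}^{N}))]^{-}\subset\mathcal{K}$. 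Lemma \ref{lem:First preliminary for large space} (with $m=i_{x}$) gives $\theta_{i_{x}}\mid\alpha_{x}^{(x)}$, the $x$-th Jordan invariant of $T|\mathcal{K}^{(x)}$; the inclusion $\mathcal{K}^{(x)}\subset\mathcal{K}$, together with the standard divisibility of Jordan invariants under restriction, yields $\alpha_{x}^{(x)}\mid\beta_{x}$, and hence $\theta_{i_{x}}\mid\beta_{x}$.

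For Part (3), the plan is to exhibit a $T$-invariant quasidirect complement $\mathcal{L}$ of $\mathcal{K}$ by completing a suitable $C_{0}$-basis of $T|\mathcal{K}$ to a $C_{0}$-basis of $T$. Concretely, one aims to find vectors $v_{1},\ldots,v_{N}$ with $v_{i_{x}}=k_{x}$ for $x=1,\ldots,r$ (where $\{k_{x}\}$ is a $C_{0}$-basis of $T|\mathcal{K}$) and appropriate $v_{n}\in\mathcal{H}(\Theta)$ for $n\notin I$, chosen so that the resulting collection satisfies the hypotheses of Proposition \ref{prop:basis when multiplicity is finite}: $\theta_{n}(T)v_{n}=0$ and the set generates $\mathcal{H}(\Theta)$ as a $T$-invariant subspace. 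Once $\{v_{n}\}$ is a $C_{0}$-basis of $T$, Theorem \ref{thm:quasi-dir} decomposes $\mathcal{H}(\Theta)$ quasidirectly into the cyclic subspaces of the $v_{n}$, and the subspace $\mathcal{L}$ spanned by those for $n\notin I$ is the desired complement with Jordan model $\bigoplus_{n\notin I}S(\theta_{n})$, by the last assertion of Proposition \ref{prop:completion-of-basis}.

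The construction of the $v_{n}$ is the principal technical obstacle. Start by lifting a $C_{0}$-basis $k_{1},\ldots,k_{r}$ of $T|\mathcal{K}$ via Proposition \ref{prop:dense-vectors} applied to the intertwiner $P_{\mathcal{H}(\Theta)}|\mathcal{R}^{(x)}\to\mathcal{K}^{(x)}$ from Part (2), obtaining $k_{x}=P_{\mathcal{H}(\Theta)}z_{x}$ with $z_{x}\in\mathcal{R}^{(x)}\subset\mathcal{E}_{i_{x}}$ and expansion $z_{x}=\sum_{n\le i_{x}}u_{x,n}y_{n}$. The inductive identity
\[
u_{x,i_{x}}(T)h_{i_{x}}=k_{x}-\sum_{n<i_{x}}u_{x,n}(T)h_{n}
\]
suggests trying $v_{n}=h_{n}$ for $n\notin I$; however, simple examples (e.g.\ with several $\theta_{n}$ coinciding) show this naive choice may fail, since the $h_{i_{x}}$ and $h_{n}$ for $n\notin I$ can overlap in cyclic content, and the leading coefficient $u_{x,i_{x}}$ need not be coprime to $\theta_{i_{x}}$. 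The remedy is to reduce to the primary case via Proposition \ref{prop:primary-decomposition} and Lemma \ref{lem:achieving-total order}, where the relevant inner functions become totally ordered by divisibility, and then to perform a simultaneous base change of the $z_{x}$'s and adjustment of the $h_{n}$'s using Corollary \ref{cor:base change}, in the spirit of the determinant manipulations of Lemma \ref{lem:preliminary to BIG space propo}. The interplay of the saturation $\beta_{x}\equiv\theta_{i_{x}}$ with the hypothesis $d(\mathcal{R})\equiv1$ is precisely what makes this base change feasible, yielding a modified collection $\{v_{n}\}$ that is cyclic for $T$ and satisfies the required annihilation conditions.
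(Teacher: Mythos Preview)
Your proposal is correct and follows the paper's approach in all three parts: the same cyclic-multiplicity argument for (1), the same use of nested subspaces $\mathcal{Q}^{(x)}\subset\mathcal{Q}\cap\mathcal{E}_{i_x}$ together with Lemma \ref{lem:First preliminary for large space} for (2), and for (3) the same outline of lifting a $C_0$-basis of $T|\mathcal{K}$ to $H^\infty$-coefficients, reducing via Proposition \ref{prop:primary-decomposition} and Lemma \ref{lem:achieving-total order} to the totally-ordered situation, and then invoking Lemma \ref{lem:preliminary to BIG space propo} with Corollary \ref{cor:base change} to complete to a $C_0$-basis of $T$. The only refinement the paper makes explicit is that the complement $\mathcal{L}$ is generated by $\{h_j:j\notin\{j_1,\ldots,j_r\}\}$ for indices $\{j_m\}$ produced by Lemma \ref{lem:preliminary to BIG space propo} (which satisfy $\theta_{j_m}\equiv\theta_{i_m}$), rather than by $\{h_j:j\notin I\}$ directly; the $h_n$'s themselves are not modified.
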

\begin{proof}
We recall that $S\otimes I_{\mathbb{C}^{N}}|\mathcal{R}$ is a unilateral
shift of multiplicity $r$, and therefore it has cyclic multiplicity
$r$ as well. Projecting onto $\mathcal{H}(\Theta)$ a cyclic set
of $r$ elements for $S\otimes I_{\mathbb{C}^{N}}|(\mathcal{Q}\cap(H^{2}\otimes\mathbb{C}^{N}))$
we obtain a cyclic set for $S(\Theta)|\mathcal{R}$, and this yields
(1).

For the proofs of (2) and (3) we need some preparation. For each $x\in\{1,2,\dots,r\}$,
choose a subspace $\mathcal{Q}_{x}\subset\mathcal{Q}\cap\mathcal{E}_{i_{x}}$
such that $\dim_{\mathfrak{D}}\mathcal{Q}_{x}=x$. The subspace $\mathcal{R}_{x}=\mathcal{Q}_{x}\cap(H^{2}\otimes\mathbb{C}^{N})$
is invariant for $S\otimes I_{\mathbb{C}^{N}}$, and $(S\otimes I_{\mathbb{C}^{N}})|\mathcal{R}_{x}$
has multiplicity equal to $x$. Also set $\mathcal{K}_{x}=[P_{\mathcal{H}(\Theta)}\mathcal{R}_{x}]^{-}$.
When $r>1$, we may assume that
\begin{equation}
\mathcal{Q}_{x}\subset\mathcal{Q}_{x+1}\text{, thus }\mathcal{R}_{x}\subset\mathcal{R}_{x+1}\text{ and }\mathcal{K}_{x}\subset\mathcal{K}_{x+1},\quad x=1,2,\dots,r-1.\label{eq:inclusion-of-Q_x}
\end{equation}
To prove (2), it suffices to consider the case when $\theta_{i_{x}}\not\equiv1$.
Denote by 
\[
S(\beta_{1}^{x})\oplus S(\beta_{2}^{x})\oplus\cdots\oplus S(\beta_{x}^{x})
\]
 the Jordan model of $T|\mathcal{K}_{x}$, $x=1,2,\dots,r$. An application
of Lemma \ref{lem:First preliminary for large space} to the space
$\mathcal{Q}_{x}$ shows that $\theta_{i_{x}}|\beta_{x}^{x}$, and
(2) follows because $\mathcal{K}_{x}\subset\mathcal{K}_{r}=\mathcal{K}$
and thus $\beta_{x}^{x}|\beta_{x}^{r}=\beta_{x}$.

As we observed above, we have
\[
\theta_{i_{1}}|\beta_{1}^{x}|\beta_{1},\,\theta_{i_{2}}|\beta_{2}^{x}|\beta_{2},\dots,\,\theta_{i_{x}}|\beta_{x}^{x}|\beta_{x},\quad x=1,2,\dots,r-1.
\]
If the hypothesis of (3) is satisfied, that is, $\beta_{x}\equiv\theta_{i_{x}}$
for $x=1,2,\dots,r$, we must have 
\[
\beta_{j}^{x}\equiv\theta_{i_{x}},\quad j=1,2,\dots,x,
\]
for $x=1,2,\dots,r$. Thus the Jordan model of $T|\mathcal{K}_{x}$
is precisely
\[
S(\theta_{i_{1}})\oplus S(\theta_{i_{2}})\oplus\cdots\oplus S(\theta_{i_{x}})
\]
for $x\in\{1,2,\dots,r\}$. The case in which $\theta_{i_{r}}\equiv1$
reduces to the same statement with $r$ replaced by $r-1$, and therefore
an inductive argument allows us to assume that $\theta_{i_{r}}\not\equiv1$
for the remainder of the proof. A repeated application of Proposition
\ref{prop:completion-of-basis} shows that we can find vectors 
\[
w_{1}\in\mathcal{K}_{1},w_{2}\in\mathcal{K}_{2},\dots,w_{r}\in\mathcal{K}_{r}=\mathcal{K},
\]
such that $\{w_{1},w_{2},\dots,w_{x}\}$ is a $C_{0}$-basis for $T|\mathcal{K}_{x}$,
$x=1,2,\dots,r$. Moreover, we may assume that these vectors are of
the form
\begin{equation}
w_{x}=\sum_{j=1}^{i_{x}}u_{x,j}(T)h_{j},\quad x=1,2,\dots,r,\label{eq:w_x as linear combo}
\end{equation}
where all the $u_{x,j}\in H^{\infty}$. This can be seen as follows.
We have 
\[
\mathcal{K}_{x}\subset[P_{\mathcal{H}(\Theta)}(\mathcal{E}_{i_{x}}\cap(H^{2}\otimes\mathbb{C}^{N}))]^{-},
\]
and according to Lemma \ref{lem:projection of E_j}, the space on
the right is the invariant subspace for $T$ generated by $\{h_{1},h_{2},\dots,h_{i_{x}}\}$.
Denote by $\mathcal{H}_{i}$ the cyclic subspace for $T$ generated
by $h_{i}$, and let 
\[
X:\bigoplus_{j=1}^{i_{x}}\mathcal{H}_{j}\to[P_{\mathcal{H}(\Theta)}(\mathcal{E}_{i_{x}}\cap(H^{2}\otimes\mathbb{C}^{N}))]^{-}
\]
be defined by
\[
X\left[\bigoplus_{j=1}^{i_{x}}v_{j}\right]=\sum_{j=1}^{i_{x}}v_{j},\quad v_{j}\in\mathcal{H}_{j},j=1,2,\dots,i_{x}.
\]
An application of Proposition \ref{prop:dense-vectors} shows that
the vector $w_{x}$ can be chosen to belong to the range of $X$,
thus $w_{x}=\sum_{j=1}^{i_{x}}v_{j}$ for some vectors $v_{j}\in\mathcal{H}_{j}$,
$j=1,2,\dots,i_{x}$. Lemma \ref{lem:vectors in the space of a cyclic operator}
provides functions $\{\alpha_{j},\beta_{j}\}_{j=1}^{i_{x}}\subset H^{\infty}$
such that $\alpha_{j}\wedge\theta_{1}\equiv1$ and $\alpha_{j}(T)v_{j}=\beta_{j}(T)h_{j}$
for $j=1,2,\dots,i_{x}$. The vector $w_{x}$ can then be replaced
by $\alpha(T)w_{x}$, where $\alpha=\alpha_{1}\alpha_{2}\cdots\alpha_{i_{x}}$,
and this vector satisfies (\ref{eq:w_x as linear combo}) with $u_{x,j}=\alpha\beta_{j}$,
$j=1,2,\dots,i_{x}$. For convenience, we write $u_{x,j}=0$ if $j=i_{x}+1,\dots,N$.
Observe also that the condition $\theta_{i_{x}}(T)w_{x}=0$ is equivalent
to
\[
\sum_{j=1}^{i_{x}}\theta_{i_{x}}(T)u_{x,j}(T)h_{j}=0
\]
and the fact that $\{h_{j}\}_{j=1}^{N}$ is a $C_{0}$-basis for $T$
implies that $\theta_{j}|\theta_{i_{x}}u_{x,j}$ for $j=1,2,\dots,i_{x}$.
In other words,
\[
\frac{\theta_{j}}{\theta_{i_{x}}}|u_{x,j},\quad j=1,2,\dots,i_{x},x=1,2,\dots,r.
\]

At this point we need to make a reduction to a special case, namely,
we can assume that the collection consisting of all inner functions
of the form
\[
\theta_{j}\wedge\det\begin{bmatrix}u_{1,j_{1}} & u_{1,j_{2}} & \cdots & u_{1,j_{s}}\\
u_{2,j_{1}} & u_{2,j_{2}} & \cdots & u_{2,j_{s}}\\
\vdots & \vdots & \ddots & \vdots\\
u_{s,j_{1}} & u_{s,j_{2}} & \cdots & u_{s,j_{s}}
\end{bmatrix},
\]
where $1\le s\le r$, $1\le j\le N$, and $1\le j_{1}<j_{2}<\cdots<j_{s}\le N$
and of the inner functions 
\[
\theta_{j}/\theta_{j+1},\quad n=1,2,\dots,N-1,
\]
 is totally ordered by divisibility and, for every nonconstant inner
factor $\omega$ of $\theta_{1}$ we have $\omega\wedge\theta_{j}\not\equiv1$,
$j=1,2,...,N\lyxmathsym{\textminus}1$, unless $\theta_{j}$ itself
is constant. This is accomplished as follows. Use Lemma \ref{lem:achieving-total order}
to find a decomposition of $\theta_{1}=m_{T}$ into a product
\[
\theta_{1}=\gamma_{1}\gamma_{2}\cdots\gamma_{n}
\]
of relatively prime inner factors with the property that the collection
consisting of all  inner functions of the form
\[
\gamma_{\ell}\wedge\theta_{j}\wedge\det\begin{bmatrix}u_{1,j_{1}} & u_{1,j_{2}} & \cdots & u_{1,j_{s}}\\
u_{2,j_{1}} & u_{2,j_{2}} & \cdots & u_{2,j_{s}}\\
\vdots & \vdots & \ddots & \vdots\\
u_{s,j_{1}} & u_{s,j_{2}} & \cdots & u_{s,j_{s}}
\end{bmatrix},
\]
where $1\le s\le r$, $1\le j\le N$, and $1\le j_{1}<j_{2}<\cdots<j_{s}\le N$
and of the inner functions 
\[
\gamma_{\ell}\wedge(\theta_{j}/\theta_{j+1})\quad,j=1,2,\dots,N-1,
\]
 is totally ordered by divisibility, and such that condition (2) of
that lemma is satisfied as well. Setting $\Gamma_{\ell}=\theta_{1}/\gamma_{\ell}$,
Proposition \ref{prop:primary-decomposition} shows that it suffices
to show that $[\Gamma_{\ell}(T)\mathcal{R}]^{-}$ has an invariant
quasidirect complement in $[\Gamma_{\ell}(T)\mathcal{H}(\Theta)]^{-}$
for $\ell=1,2,\dots,n$. In order to do this, we replace $T|[\Gamma_{\ell}(T)\mathcal{H}(\Theta)]^{-}$
by a quasisimilar operator as follows. Define  $A_{\ell}:H^{2}\otimes\mathbb{C}^{N}\to[\Gamma_{\ell}(T)\mathcal{H}(\Theta)]^{-}$
by setting
\[
A_{\ell}h=P_{\mathcal{H}(\Theta)}(\Gamma_{\ell}h).
\]
We have $TA_{\ell}=A_{\ell}(S\otimes I_{\mathbb{C}^{N}})$, so that
the injective operator $B_{\ell}|[(H^{2}\otimes\mathbb{C}^{N})\ominus\ker(A_{\ell})]$
satisfies
\[
TB_{\ell}=B_{\ell}T_{\ell},
\]
where $T_{\ell}$ is the compression of $S\otimes I_{\mathbb{C}^{N}}$
to $\mathcal{H}_{\ell}=(H^{2}\otimes\mathbb{C}^{N})\ominus\ker(A_{\ell})$.
It follows immediately that $T{}_{\ell}$ is an operator of class
$C_{0}$ and, since the range of $B_{\ell}$ is dense in $[\Gamma_{\ell}(T)\mathcal{H}(\Theta)]^{-}$,
$T_{\ell}$ is quasisimilar to $T|[\Gamma_{\ell}(T)\mathcal{H}(\Theta)]^{-}$.
The vectors $P_{\mathcal{H}_{\ell}}h_{1},P_{\mathcal{H}_{\ell}}h_{2},\dots,P_{\mathcal{H}_{\ell}}h_{N}$
form a $C_{0}$-basis for $T_{\ell}$ because the vectors 
\[
B_{\ell}P_{\mathcal{H}_{\ell}}h_{j}=\Gamma_{\ell}(T)h_{j}
\]
 form a $C_{0}$-basis for $T|[\Gamma_{\ell}(T)\mathcal{H}(\Theta)]^{-}$.
Define next a subspace $\mathcal{K}^{\ell}\subset\mathcal{H}_{\ell}$
as the invariant subspace for $T_{\ell}$ generated by $P_{\mathcal{H}_{\ell}}w_{x}$,
$x=1,2,\dots,r$. Then $[B_{\ell}\mathcal{K}^{\ell}]^{-}=[\Gamma_{\ell}(T)\mathcal{K}]^{-}$
and the vectors $P_{\mathcal{H}_{\ell}}w_{x}$, $x=1,2,\dots,r$ form
a $C_{0}$-basis for $T_{\ell}|\mathcal{K}^{\ell}$. We see that it
suffices to show that $\mathcal{K}^{\ell}$ has an invariant quasi-complement
in $\mathcal{H}_{\ell}$. The spaces $\mathcal{K}^{\ell}$ and $\mathcal{H}_{\ell}$
are entirely analogous to the original spaces $\mathcal{K}$ and $\mathcal{H}$,
and they have the additional divisibility properties outlined above.
Of course, $\theta_{i}\wedge\gamma_{\ell}\equiv\theta_{i}^{T_{\ell}}$.
Thus it suffices to prove (3) under these additional divisibility
conditions. This is accomplished by showing that the vectors $\{w_{x}\}_{x=1}^{r}$
and a set $F\subset\{h_{j}\}_{j=1}^{N}$ of cardinality $N-r$ form
a $C_{0}$-basis for $T$. The quasidirect complement of $\mathcal{K}$
is then the invariant subspace for $T$ generated by $F$. Divide
the functions $\{\theta_{i_{x}}\}_{x=1}^{r}$ into equivalence classes,
that is, find integers $0=r_{0}<r_{1}<r_{2}<\cdots<r_{p}=r$ such
that $\theta_{i_{x}}\equiv\theta_{i_{x+1}}$ when $r_{k-1}<x\le r_{k}$
and $1\le i\le p$, but $\theta_{i_{x+1}}/\theta_{i_{x}}$ is not
constant if $x=r_{k}$, $1\le k<p$. We apply Lemma \ref{lem:preliminary to BIG space propo}
to the $\mathfrak{D}$-vector space generated by $\{w_{x}\}_{x=r_{k-1}+1}^{r_{k}}$
to obtain indices $\{j_{m}\}_{m=r_{k-1}+1}^{r_{k}}$ with the property
that $\theta_{j_{m}}\equiv\theta_{i_{r_{k}}}$ for $r_{k-1}+1\le m\le r_{k}$,
and the function
\[
\det[u_{\ell,j_{k}}]_{\ell,k=r_{i-1}+1}^{r_{i}}
\]
is relatively prime to $\theta_{i_{r_{k}}}$, and therefore to $\theta_{1}$
because of the divisibility assumptions we made about $\theta_{1},\theta_{2},\dots,\theta_{N}$.
The proof of (3) is now completed by showing that the set
\[
A=\{w_{\ell}\}_{\ell=1}^{r}\cup\{h_{j}:j\notin\{j_{1},j_{2},\dots,j_{r}\}\}
\]
is a $C_{0}$-basis for $T$. In other words, this basis is obtained
by replacing each $h_{j_{\ell}}$ by the corresponding vector $w_{\ell}$.
It is clear however that the set $A$ is obtained from the $C_{0}$-basis
$\{h_{j}\}_{j=1}^{N}$ by the process described in Corollary \ref{cor:base change}.
This concludes the proof of (3) and of the proposition.
\end{proof}
The preceding proposition shows that, for certain flags $\mathcal{E}$
and for spaces $\mathcal{Q}\in\mathfrak{S}(\mathcal{E},I)$, the invariant
subspace $\mathcal{M}=[P_{\mathcal{H}(\Theta)}(\mathcal{Q}\cap(H^{2}\otimes\mathbb{C}^{N}))]^{-}$
has the property that $T|\mathcal{M}$ has a `large' Jordan model.
Next we produce flags $\mathcal{E}$ with the property that the Jordan
models of operators of the form $T|\mathcal{M}$ are `small' if $\mathcal{Q}\in\mathfrak{S}(\mathcal{E},I)$.
Some preparation is needed first.
\begin{lem}
\label{lem:projections and invariant subspaces}Consider a factorization
\[
\Theta(\lambda)=\Theta''(\lambda)\Theta'(\lambda),\quad\lambda\in\mathbb{D},
\]
 where $\Theta'$ and $\Theta''$ are inner functions. Write 
\[
\mathcal{H}(\Theta)=\mathcal{H}'\oplus\mathcal{H}'',
\]
where 
\[
\mathcal{H}'=M_{\Theta''}[H^{2}\otimes\mathbb{C}^{N}]\ominus M_{\Theta}[H^{2}\otimes\mathbb{C}^{N}]=M_{\Theta''}\mathcal{H}(\Theta')
\]
is $T$-invariant and $\mathcal{H}''=\mathcal{H}(\Theta'')$. Let
$\mathcal{Q}\subset\mathfrak{D}\otimes\mathbb{C}^{N}$ be a $\mathfrak{D}$-vector
space, and define invariant subspaces
\begin{eqnarray*}
\mathcal{M} & = & [P_{\mathcal{H}(\Theta)}(\mathcal{Q}\cap(H^{2}\otimes\mathbb{C}^{N}))]^{-},\\
\mathcal{M}' & = & [P_{\mathcal{H}'}(\mathcal{Q}\cap M_{\Theta''}[H^{2}\otimes\mathbb{C}^{N}])]^{-},\\
\mathcal{M}'' & = & [P_{\mathcal{H}''}(\mathcal{Q}\cap(H^{2}\otimes\mathbb{C}^{N}))]^{-},
\end{eqnarray*}
for $T$, $T'=T|\mathcal{H}'$, and $T''=P_{\mathcal{H}''}T|\mathcal{H}''$,
respectively. Then we have
\[
\mathcal{M}''=[P_{\mathcal{H}''}\mathcal{M}]^{-}\text{ and }\mathcal{M}'=\mathcal{M}\cap\mathcal{H}'.
\]
\end{lem}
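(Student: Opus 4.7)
The lemma has two identities and I would address them separately. For $\mathcal{M}''=[P_{\mathcal{H}''}\mathcal{M}]^{-}$, the approach is a direct calculation: since $\mathcal{H}''\subset\mathcal{H}(\Theta)$ one has $P_{\mathcal{H}''}P_{\mathcal{H}(\Theta)}=P_{\mathcal{H}''}$ on $H^{2}\otimes\mathbb{C}^{N}$, and continuity of $P_{\mathcal{H}''}$ lets this identity pass through the closure operation defining $\mathcal{M}$, giving the result in essentially one line.

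For the second identity, set $\mathcal{R}=\mathcal{Q}\cap(H^{2}\otimes\mathbb{C}^{N})$ and $\mathcal{R}'=\mathcal{Q}\cap M_{\Theta''}[H^{2}\otimes\mathbb{C}^{N}]$. The inclusion $\mathcal{M}'\subset\mathcal{M}\cap\mathcal{H}'$ is immediate from the orthogonal decomposition $M_{\Theta''}[H^{2}\otimes\mathbb{C}^{N}]=\mathcal{H}'\oplus M_{\Theta}[H^{2}\otimes\mathbb{C}^{N}]$: every $h\in\mathcal{R}'$ satisfies $P_{\mathcal{H}(\Theta)}h=P_{\mathcal{H}'}h$, a vector that lies simultaneously in $\mathcal{M}$ and in $\mathcal{H}'$.

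The substantive half is $\mathcal{M}\cap\mathcal{H}'\subset\mathcal{M}'$, and my plan is to invoke Proposition \ref{prop:dense-vectors} applied to $T|\mathcal{M}$ rather than to $T$ itself. The Beurling--Lax--Halmos theorem forces $\mathcal{R}$ to have finite rank $r\le N$, and projecting any cyclic $r$-tuple for $(S\otimes I_{\mathbb{C}^{N}})|\mathcal{R}$ through $X=P_{\mathcal{H}(\Theta)}|\mathcal{R}$ produces a cyclic set for $T|\mathcal{M}$, so $\mu_{T|\mathcal{M}}\le r<\infty$. Since $X:\mathcal{R}\to\mathcal{M}$ has dense range by the definition of $\mathcal{M}$ and intertwines $(S\otimes I_{\mathbb{C}^{N}})|\mathcal{R}$ with $T|\mathcal{M}$, Proposition \ref{prop:dense-vectors} represents the $T|\mathcal{M}$-invariant subspace $\mathcal{M}\cap\mathcal{H}'$ as $\overline{X\mathcal{N}}$ for some shift-invariant $\mathcal{N}\subset\mathcal{R}$.

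The closing observation is that any such $\mathcal{N}$ is automatically forced into $\mathcal{R}'$: for $h\in\mathcal{N}$, the containment $Xh=P_{\mathcal{H}(\Theta)}h\in\mathcal{H}'$ is equivalent to $P_{\mathcal{H}''}h=0$, which places $h$ in $M_{\Theta''}[H^{2}\otimes\mathbb{C}^{N}]$ and hence in $\mathcal{R}'$. Consequently $X\mathcal{N}\subset P_{\mathcal{H}(\Theta)}\mathcal{R}'=P_{\mathcal{H}'}\mathcal{R}'$, and passing to closures yields $\mathcal{M}\cap\mathcal{H}'=\overline{X\mathcal{N}}\subset\mathcal{M}'$. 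The main obstacle in this plan is arranging the hypotheses of Proposition \ref{prop:dense-vectors} — chiefly the finite-multiplicity observation for $T|\mathcal{M}$; after that, the argument is just bookkeeping.
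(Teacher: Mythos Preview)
Your proposal is correct and follows essentially the same route as the paper's proof: both handle $\mathcal{M}''$ and the inclusion $\mathcal{M}'\subset\mathcal{M}\cap\mathcal{H}'$ as immediate, and both obtain the reverse inclusion by applying Proposition~\ref{prop:dense-vectors} to the intertwining map $X=P_{\mathcal{H}(\Theta)}|\mathcal{R}$ (with $T|\mathcal{M}$ playing the role of the finite-multiplicity $C_0$ operator), then observing that any preimage $q\in\mathcal{R}$ of a vector in $\mathcal{M}\cap\mathcal{H}'$ must already lie in $M_{\Theta''}[H^{2}\otimes\mathbb{C}^{N}]$ and hence in $\mathcal{R}'$. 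Your write-up is slightly more explicit than the paper's about verifying the finite-multiplicity hypothesis and about naming the invariant subspace $\mathcal{N}$ produced by Proposition~\ref{prop:dense-vectors}, but the argument is the same.
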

\begin{proof}
The first equality and the inclusion
\[
\mathcal{M}'\subset\mathcal{M}\cap\mathcal{H}'
\]
follow directly from the definitions of $\mathcal{M},\mathcal{M}',$
and $\mathcal{M}''$. For the opposite inclusion, note that the set
of vectors $h'\in\mathcal{M}\cap\mathcal{H}'$ of the form $P_{\mathcal{H}(\Theta)}q$
for some $q\in\mathcal{Q}\cap(H^{2}\otimes\mathbb{C}^{N})$ is dense
in $\mathcal{M}\cap\mathcal{H}'$ by Proposition \ref{prop:dense-vectors}.
Consider such a vector $h'=P_{\mathcal{H}(\Theta)}q$, and observe
that $q-h'\in M_{\Theta}[H^{2}\otimes\mathbb{C}^{N}]$ so that
\[
q\in\mathcal{H}'+M_{\Theta}[H^{2}\otimes\mathbb{C}^{N}]\subset M_{\Theta''}[H^{2}\otimes\mathbb{C}^{N}].
\]
It follows that $h'\in\mathcal{M}'$, and this concludes the proof.
\end{proof}
The preceding lemma is applied in the proofs of Proposition \ref{prop:small invariant subspace}
and Theorem \ref{thm:Horn inequalities-finite case}. For the first
application, we recall that any inner multiple $\omega$ of $\theta_{1}$
is a scalar multiple of $\Theta$, that is, there exists an inner
function $\Omega$ such that
\begin{equation}
\Theta(\lambda)\Omega(\lambda)=\omega(\lambda)I_{\mathbb{C}^{N}},\quad\lambda\in\mathbb{D}.\label{eq:scalar multiple}
\end{equation}
 Of course, the operator $S(\omega I_{\mathbb{C}^{N}})$ is unitarily
equivalent to the orthogonal sum of $N$ copies of $S(\omega)$.
\begin{lem}
\label{lem:C0 basis in space and subspace}Let $\omega\in H^{\infty}$
be an inner multiple of $\theta_{1}$\emph{.} Then there exist bounded
vectors $y_{1},y_{2},\dots,y_{N}\in H^{2}\otimes\mathbb{C}^{N}$ such
that:
\begin{enumerate}
\item $P_{\mathcal{H}(\omega I_{\mathbb{C}^{N}})}y_{1},P_{\mathcal{H}(\omega I_{\mathbb{C}^{N}})}y_{2},\dots,P_{\mathcal{H}(\omega I_{\mathbb{C}^{N}})}y_{N}$
form a $C_{0}$-basis for $S(\omega)\otimes I_{\mathbb{C}^{N}}$,
\item $P_{\mathcal{H}(\Theta)}y_{1},P_{\mathcal{H}(\Theta)}y_{2},\dots,P_{\mathcal{H}(\Theta)}y_{N}$
form a $C_{0}$-basis for $T=S(\Theta)$, and
\item $P_{\mathcal{H}(\omega I_{\mathbb{C}^{N}})}(\theta_{N}y_{N}),P_{\mathcal{H}(\omega I_{\mathbb{C}^{N}})}(\theta_{N-1}y_{N-1}),\dots,P_{\mathcal{H}(\omega I_{\mathbb{C}^{N}})}(\theta_{1}y_{1})$
form a $C_{0}$-basis for the restriction of $S\otimes I_{\mathbb{C}^{N}}$
to $M_{\Theta}[H^{2}\otimes\mathbb{C}^{N}]\ominus[\omega H^{2}\otimes\mathbb{C}^{N}]$.
\end{enumerate}
\end{lem}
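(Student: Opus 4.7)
The plan is to construct $y_1, \ldots, y_N$ as the columns of a matrix $Y$ coming from a quasiequivalence of $\Theta$, but with the coprimality condition strengthened from $\theta_1$ to $\omega$. By quasiequivalence, find square $H^\infty$ matrices $X,Y$ with
\[
\Theta X = Y\Theta', \qquad \Theta' = \mathrm{diag}(\theta_1,\theta_2,\ldots,\theta_N),
\]
and $\det(X)\wedge\omega \equiv \det(Y)\wedge\omega \equiv 1$. Set $y_n = Y(1\otimes e_n)$; these vectors are bounded since $Y$ has $H^\infty$ entries. Property (2) is the result of \cite{moore-nor} recalled before Lemma \ref{lem:troubling-lemma-1}; the stronger coprimality with $\omega$ will be used for (1) and (3) but is not needed for (2).

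For (1), the operator $T_\omega = S(\omega)\otimes I_{\mathbb{C}^N}$ has Jordan model $\bigoplus_{n=1}^N S(\omega)$, so Proposition \ref{prop:basis when multiplicity is finite} applies. Condition (2) of that proposition is trivial since $\omega(T_\omega)=0$. For condition (1), the identity $YY^{\mathrm{Ad}} = \det(Y)I$ applied to $1\otimes e_m$ gives
\[
\det(Y)(1\otimes e_m) = \sum_{n=1}^N (Y^{\mathrm{Ad}})_{nm}\, y_n,
\]
so the invariant subspace for $T_\omega$ generated by $\{P_{\mathcal{H}(\omega I_{\mathbb{C}^N})}y_n\}_{n=1}^N$ contains $\det(Y)(T_\omega)P_{\mathcal{H}(\omega I_{\mathbb{C}^N})}(1\otimes e_m)$ for every $m$. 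Because $\det(Y)\wedge\omega\equiv 1$, the operator $\det(Y)(T_\omega)$ has dense range, while the vectors $P_{\mathcal{H}(\omega I_{\mathbb{C}^N})}(1\otimes e_m)=P_{\mathcal{H}(\omega)}(1)\otimes e_m$ form an obvious $C_0$-basis for $T_\omega$; hence the invariant subspace is all of $\mathcal{H}(\omega I_{\mathbb{C}^N})$.

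For (3), note that $\theta_n y_n = Y\Theta'(1\otimes e_n) = \Theta X(1\otimes e_n) = \Theta X_n$, where $X_n$ denotes the $n$-th column of $X$. Let $\Omega$ be the inner function with $\Theta\Omega = \omega I_{\mathbb{C}^N}$. The subspace $\mathcal{N} = M_\Theta[H^2\otimes\mathbb{C}^N]\ominus \omega H^2\otimes\mathbb{C}^N$ is the image of $\mathcal{H}(\Omega)$ under $M_\Theta$, and this isometry intertwines $S(\Omega)$ with $(S\otimes I_{\mathbb{C}^N})|\mathcal{N}$; a short computation shows $P_{\mathcal{H}(\omega I_{\mathbb{C}^N})}(\theta_n y_n) = M_\Theta P_{\mathcal{H}(\Omega)}(X_n)$. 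So (3) reduces to showing that $\{P_{\mathcal{H}(\Omega)}(X_{N+1-n})\}_{n=1}^N$ is a $C_0$-basis for $S(\Omega)$. Multiplying $\Theta X_n = \theta_n y_n$ on the left by $\Omega$ and using $\Omega\Theta = \omega I_{\mathbb{C}^N}$ yields
\[
\Omega y_n = (\omega/\theta_n)X_n, \qquad \text{equivalently}\qquad \Omega Y = X\tilde\Theta',
\]
where $\tilde\Theta' = \mathrm{diag}(\omega/\theta_1,\ldots,\omega/\theta_N)$. Let $J$ be the reversal permutation on $\mathbb{C}^N$; then $\Omega(YJ) = (XJ)(J\tilde\Theta'J)$, and $J\tilde\Theta'J = \mathrm{diag}(\omega/\theta_N,\ldots,\omega/\theta_1)$ is in the correct (decreasing) divisibility order for $\Omega$. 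Since $\det(XJ)$ and $\det(YJ)$ differ from $\det(X)$ and $\det(Y)$ only by sign, they remain coprime to $\omega$, and in particular to the top invariant factor $\omega/\theta_N$. The result of \cite{moore-nor} then gives that the projections to $\mathcal{H}(\Omega)$ of the columns of $XJ$, namely $X_N, X_{N-1},\ldots,X_1$, form a $C_0$-basis for $S(\Omega)$; applying $M_\Theta$ delivers the basis stipulated in (3).

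The main obstacle is the identity $\Omega Y = X\tilde\Theta'$: it repackages the quasiequivalence data for $\Theta$ as a quasiequivalence for $\Omega$, modulo a reversal of the order of invariant factors, which is then corrected by the permutation $J$ so that the standard $C_0$-basis result can be invoked for $\Omega$.
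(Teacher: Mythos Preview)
Your construction of the $y_n$ as columns of $Y$ from a quasiequivalence $\Theta X=Y\Theta'$ with $\det(X)\wedge\omega\equiv\det(Y)\wedge\omega\equiv1$ is exactly what the paper does, and your treatments of (1) and (2) are essentially the same as the paper's (the paper phrases (1) as another instance of Lemma~\ref{lem:troubling-lemma-1}, applied with $\omega I_{\mathbb{C}^N}$ in place of $\Theta$, which amounts to your $YY^{\rm Ad}=\det(Y)I$ cyclicity argument).

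For (3) the two arguments diverge. The paper notes that $\theta_jy_j=\Theta X(1\otimes e_j)$ lands in the right space, checks (using (1)) that the projected vectors are $C_0$-independent with minimal functions $\omega/\theta_j$, and then invokes the result of \cite{ber-tan} to conclude that the invariant subspace they generate must be all of $M_\Theta[H^2\otimes\mathbb{C}^N]\ominus[\omega H^2\otimes\mathbb{C}^N]$, since the two have the same Jordan model. Your argument instead observes that the relation $\Theta X=Y\Theta'$ dualizes to $\Omega(YJ)=(XJ)(J\widetilde\Theta'J)$, which is precisely a quasiequivalence of $\Omega$ with its diagonal normal form, with determinants still coprime to $\omega$ and hence to $\omega/\theta_N$; thus \cite{moore-nor} applies directly to $\Omega$ and hands you the $C_0$-basis for $S(\Omega)$, transported to the target space by $M_\Theta$. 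This is correct and more self-contained: it avoids the external appeal to \cite{ber-tan} and, as a bonus, identifies the invariant factors of $\Omega$ along the way. The paper's route is shorter to write but leans on a structural result you manage to bypass.
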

\begin{proof}
Denote by $\Theta'$ the diagonal matrix with diagonal entries $\theta_{1},\dots,\theta_{N}$.
As noted earlier, $\Theta$ and $\Theta'$ are quasiequivalent. Choose
$N\times N$ matrices $X,Y$ over $H^{\infty}$ such that $\Theta X=Y\Theta'$
and $\det(X)\wedge\omega\equiv\det(Y)\wedge\omega\equiv1$, and let
$y_{j}=Y(1\otimes e_{j})$, $j=1,\dots,N$, be the columns of $Y$.
We claim that these vectors satisfy the conclusion of the lemma. Indeed,
(2) follows from Lemma \ref{lem:troubling-lemma-1} while (1) follows
from the same lemma because $(\omega\otimes I_{\mathbb{C}^{N}})Y=Y(\omega\otimes I_{\mathbb{C}^{N}})$.
Finally, we observe that the vectors $P_{\mathcal{H}(\omega I_{\mathbb{C}^{N}})}(\theta_{j}y_{j})$
belong to $M_{\Theta}[H^{2}\otimes\mathbb{C}^{N}]\ominus[\omega H^{2}\otimes\mathbb{C}^{N}]$
because $\theta_{j}y_{j}=\Theta X(1\otimes e_{j})$ for $j=1,\dots,N$.
Moreover, these vectors form a $C_{0}$-basis for the $S(\omega)\otimes I_{\mathbb{C}^{N}}$-invariant
subspace $\mathcal{M}$ they generate. To conclude the proof of (3)
we need to show that $\mathcal{M}=M_{\Theta}[H^{2}\otimes\mathbb{C}^{N}]\ominus[\omega H^{2}\otimes\mathbb{C}^{N}]$
and for this purpose it suffices to verify that the restrictions of
$S(\omega)\otimes I_{\mathbb{C}^{N}}$ to these two subspaces have
the same Jordan model. This follows from the main result of \cite{ber-tan}.
\end{proof}
For our result on `small' invariant subspaces, we fix an inner multiple
$\omega\in H^{\infty}$ of $\theta_{1}^{2}$, a sequence $\{y_{n}\}_{n=1}^{N}$
satisfying the conclusion of Lemma \ref{lem:C0 basis in space and subspace},
and denote by $\mathcal{F}$ the complete flag in $\mathfrak{D}\otimes\mathbb{C}^{N}$
defined by letting $\mathcal{F}_{n}$ be the space generated by $\{y_{N},y_{N-1},\dots,y_{N-n+1}\}$
for $n=1,2,\dots,N$. The choice of $\omega$ insures that all the
functions $\omega/\theta_{n}$ are divisible by $\theta_{1}$.
\begin{prop}
\label{prop:small invariant subspace}Fix a positive integer $r\le N$,
a subset 
\[
I=\{i_{1}<i_{2}<\cdots<i_{r}\}\subset\{1,2,\dots,N\},
\]
and a subspace $\mathcal{Q}\in\mathfrak{S}(\mathcal{F},I)$. Set $\mathcal{R}=\mathcal{Q}\cap(H^{2}\otimes\mathbb{C}^{N})$
and $\mathcal{K}=[P_{\mathcal{H}(\Theta)}\mathcal{R}]^{-}$. Then\emph{:}
\begin{enumerate}
\item The space $\mathcal{K}$ is invariant for $T=S(\Theta)$, and $T|\mathcal{K}$
has cyclic multiplicity less than or equal to $r$.
\item If the Jordan model of $T|\mathcal{K}$ is $S(\alpha_{1})\oplus S(\alpha_{2})\oplus\cdots\oplus S(\alpha_{r})$,
then $\alpha_{x}$ divides $\theta_{N+1-i_{r+1-x}}$ for $x=1,2,\dots,r$.
\item If $\alpha_{x}\equiv\theta_{N+1-i_{r+1-x}}$ for all $x=1,2,\dots,r$,
then $\mathcal{K}$ has a $T$-invariant quasidirect complement $\mathcal{L}$
in $\mathcal{H}(\Theta)$ such that $T|\mathcal{L}\sim\bigoplus_{N+1-i\notin I}S(\theta_{i})$.
\end{enumerate}
\end{prop}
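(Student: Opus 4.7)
Part (1) is the standard multiplicity argument from the start of the proof of Proposition \ref{prop:big-submodule}: $(S\otimes I_{\mathbb{C}^N})|\mathcal{R}$ is a shift of multiplicity $r$, so projecting any $r$-element cyclic set to $\mathcal{H}(\Theta)$ yields $r$ generators for $\mathcal{K}$ under $T$.

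For parts (2) and (3), the strategy is to transfer the problem to the auxiliary operator $S(\Omega)$ and apply Proposition \ref{prop:big-submodule} there. Use the factorization $\omega I_{\mathbb{C}^N}=\Theta\Omega$ from (\ref{eq:scalar multiple}), write $\mathcal{H}(\omega I_{\mathbb{C}^N})=\mathcal{H}'\oplus\mathcal{H}(\Theta)$ with $\mathcal{H}'=M_\Theta\mathcal{H}(\Omega)$ invariant for $\tilde{T}=S(\omega)\otimes I_{\mathbb{C}^N}$, and define $\tilde{\mathcal{K}}=[P_{\mathcal{H}(\omega I_{\mathbb{C}^N})}(\mathcal{Q}\cap(H^2\otimes\mathbb{C}^N))]^{-}$. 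Lemma \ref{lem:projections and invariant subspaces} gives $\mathcal{K}=[P_{\mathcal{H}(\Theta)}\tilde{\mathcal{K}}]^{-}$ and identifies $\tilde{\mathcal{K}}\cap\mathcal{H}'=M_\Theta\mathcal{K}^\Omega$, where $\mathcal{K}^\Omega\subset\mathcal{H}(\Omega)$ is the analogous invariant subspace. Since $\Theta X=Y\Theta'$ with $\Theta'=\mathrm{diag}(\theta_1,\ldots,\theta_N)$ from Lemma \ref{lem:C0 basis in space and subspace}, the columns $y_j$ and $x_j$ of $Y$ and $X$ satisfy $M_\Theta x_j=\theta_j y_j$, so the pulled-back flag $M_\Theta^{-1}\mathcal{F}_n=\mathrm{span}_\mathfrak{D}\{x_{N+1-n},\ldots,x_N\}$ is precisely the standard flag for the $C_0$-basis $\{P_{\mathcal{H}(\Omega)}x_{N+1-n}\}_{n=1}^N$ of $S(\Omega)$, whose Jordan model is $\bigoplus_n S(\omega/\theta_{N+1-n})$. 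Proposition \ref{prop:big-submodule} applied to $S(\Omega)$ with $\mathcal{Q}^\Omega=M_\Theta^{-1}\mathcal{Q}\in\mathfrak{S}(M_\Theta^{-1}\mathcal{F},I)$ produces a Jordan model $\bigoplus_x S(\beta^\Omega_x)$ for $S(\Omega)|\mathcal{K}^\Omega$ with $(\omega/\theta_{N+1-i_x})\mid\beta^\Omega_x$; the hypothesis $\theta_1^2\mid\omega$ makes these divisors nonconstant, so $\tilde{\mathcal{K}}\cap\mathcal{H}'$ has multiplicity exactly $r$. A separate, trivial application of Proposition \ref{prop:big-submodule} to $\tilde{T}$ (whose Jordan functions are all $\omega$) gives $\tilde{T}|\tilde{\mathcal{K}}\sim\bigoplus_{x=1}^{r}S(\omega)$ together with an invariant quasidirect complement $\tilde{\mathcal{L}}$ satisfying $\tilde{T}|\tilde{\mathcal{L}}\sim\bigoplus_{x=1}^{N-r}S(\omega)$.

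For the pointwise divisibility required in (2), observe that $\tilde{T}|\tilde{\mathcal{K}}$ is quasisimilar to $S(\omega I_{\mathbb{C}^r})$, and $\tilde{\mathcal{K}}\cap\mathcal{H}'$ corresponds under this quasisimilarity to an invariant subspace of maximal rank $r$. Beurling-Lax-Halmos supplies a factorization $\omega I_{\mathbb{C}^r}=\Phi\Psi$ with $\Phi,\Psi$ inner $r\times r$ matrices, and simultaneous quasidiagonalization yields the pointwise conservation $\alpha_y\cdot\beta^\Omega_{r+1-y}\equiv\omega$ for $y=1,\ldots,r$; combined with $(\omega/\theta_{N+1-i_x})\mid\beta^\Omega_x$ specialized at $x=r+1-y$, this forces $\alpha_y\mid\theta_{N+1-i_{r+1-y}}$, proving (2). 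For (3), the hypothesis $\alpha_y\equiv\theta_{N+1-i_{r+1-y}}$ saturates the conservation and forces $\beta^\Omega_x\equiv\omega/\theta_{N+1-i_x}$; Proposition \ref{prop:big-submodule}(3) applied to $S(\Omega)$ then supplies a quasidirect complement $\mathcal{L}^\Omega$ of $\mathcal{K}^\Omega$ in $\mathcal{H}(\Omega)$ with $S(\Omega)|\mathcal{L}^\Omega\sim\bigoplus_{n\notin I}S(\omega/\theta_{N+1-n})$. Transporting this complement across $M_\Theta$ to $\mathcal{H}'$ and repeating the Beurling-Lax-Halmos analysis on the pair $(\tilde{\mathcal{L}},\tilde{\mathcal{L}}\cap\mathcal{H}')$ produces the $T$-invariant quasidirect complement $\mathcal{L}=[P_{\mathcal{H}(\Theta)}\tilde{\mathcal{L}}]^{-}$ with $T|\mathcal{L}\sim\bigoplus_{N+1-i\notin I}S(\theta_i)$. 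The principal obstacle is the Beurling-Lax-Halmos step: one must carefully promote the quasisimilarity $\tilde{T}|\tilde{\mathcal{K}}\sim S(\omega I_{\mathbb{C}^r})$ to a setting in which the pointwise conservation of Jordan invariants is rigorously available, and verify that the quasidirect decomposition in $\mathcal{H}(\omega I_{\mathbb{C}^N})$ descends correctly to $\mathcal{H}(\Theta)$ under compression.
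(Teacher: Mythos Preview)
Your approach to parts (1) and (2) is essentially the paper's. You pull back through $M_\Theta$ to work with $S(\Omega)$, while the paper works directly with $T'=(S(\omega)\otimes I_{\mathbb{C}^N})|\mathcal{H}'$; these are unitarily equivalent viewpoints. The ``principal obstacle'' you flag---the pointwise conservation $\alpha_y\beta^\Omega_{r+1-y}\equiv\omega$---is precisely what the paper dispatches by citing \cite{ber-tan} (invariant subspaces of a uniform Jordan operator). Your Beurling--Lax--Halmos sketch is a correct re-derivation of that result: once $\tilde T|\tilde{\mathcal K}\sim S(\omega I_{\mathbb{C}^r})$, the lattice isomorphism for finite-multiplicity $C_0$ operators lets you realize $\tilde{\mathcal K}\cap\mathcal{H}'$ as $M_\Phi\mathcal{H}(\Psi)$ with $\Phi\Psi=\omega I_{\mathbb{C}^r}$, and simultaneous quasidiagonalization via $\Psi=\Phi^{-1}\omega I$ gives exactly the pointwise identity. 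You should also record explicitly that $T|\mathcal K$ is quasisimilar to the compression side (the paper uses $X=P_{\mathcal H(\Theta)}|\mathcal M\ominus\mathcal M'$), but this is routine.

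Part (3) has a genuine gap. You build two unrelated complements: $\tilde{\mathcal L}$ in $\mathcal H(\omega I_{\mathbb{C}^N})$ from Proposition~\ref{prop:big-submodule} applied to $S(\omega)\otimes I_{\mathbb{C}^N}$, and $M_\Theta\mathcal{L}^\Omega$ in $\mathcal H'$ from Proposition~\ref{prop:big-submodule} applied to $S(\Omega)$. There is no reason to expect $\tilde{\mathcal L}\cap\mathcal H'=M_\Theta\mathcal L^\Omega$, so your ``Beurling--Lax--Halmos analysis on the pair $(\tilde{\mathcal L},\tilde{\mathcal L}\cap\mathcal H')$'' cannot be fed the Jordan data you computed for $\mathcal L^\Omega$. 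Even if you could compute the Jordan model of $T|[P_{\mathcal H(\Theta)}\tilde{\mathcal L}]^-$, quasidirect decompositions do not descend under compression: from $\tilde{\mathcal K}\vee\tilde{\mathcal L}=\mathcal H(\omega I_{\mathbb{C}^N})$ and $\tilde{\mathcal K}\cap\tilde{\mathcal L}=\{0\}$ you get the span condition downstairs but not $\mathcal K\cap\mathcal L=\{0\}$. The paper avoids this entirely: it re-runs the \emph{constructive} part of the proof of Proposition~\ref{prop:big-submodule}(3) for $T'$, obtaining a $C_0$-basis $\{w_x\}$ for $T'|\mathcal M'$ written explicitly as $w_x=P_{\mathcal H'}\sum_j u_{x,j}\theta_j y_j$, then defines $\widetilde w_x=P_{\mathcal H(\Theta)}\sum_j(u_{x,j}\theta_j/\theta_{N+1-i_x})y_j\in\mathcal K$ and checks directly via Corollary~\ref{cor:base change} that $\{\widetilde w_x\}_{x=1}^r$ together with the appropriate $P_{\mathcal H(\Theta)}y_j$ form a $C_0$-basis for $T$. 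The complement $\mathcal L$ is then the cyclic span of those remaining $y_j$'s. In short, for (3) you need to build a basis in $\mathcal H(\Theta)$, not push one down from $\mathcal H(\omega I_{\mathbb{C}^N})$.
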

\begin{proof}
Using (\ref{eq:scalar multiple}) and the notation in Lemma \ref{lem:projections and invariant subspaces},
with $\omega I_{\mathbb{C}^{N}},\Omega,\Theta$ playing the roles
of $\Theta,\Theta',\Theta''$, respectively, we have $\mathcal{K}=\mathcal{M}''$.
Continuing with that notation, we claim that the compression of $S\otimes I_{\mathbb{C}^{N}}$
to the space $\mathcal{M}$ has Jordan model 
\[
\underbrace{S(\omega)\oplus S(\omega)\oplus\cdots\oplus S(\omega)}_{r\text{ times}}.
\]
Indeed, apply Proposition \ref{prop:big-submodule} with $S(\omega)\otimes I_{\mathbb{C}^{N}}$
in place of $T$ to deduce that this Jordan model has the form $S(\gamma_{1})\oplus S(\gamma_{2})\oplus\cdots\oplus S(\gamma_{r})$
where all the functions $\gamma_{j}$ are divisible by $\omega$.
However, these functions must also divide the minimal function of
$S(\omega)\otimes I_{\mathbb{C}^{N}}$ which is $\omega$, and therefore
$\gamma_{j}\equiv\omega$ for $j=1,2,\dots,r$.

Next we observe that
\[
T'=S(\omega)\otimes I_{\mathbb{C}^{N}}|[\mathcal{H}(\omega)\otimes\mathbb{C}^{N}]\ominus\mathcal{H}(\Theta)
\]
is an operator of class $C_{0}$ with Jordan model 
\[
S(\omega/\theta_{N})\oplus S(\omega/\theta_{N-1})\oplus\cdots\oplus S(\omega/\theta_{1}),
\]
where the summands are written in this order so that this is a Jordan
operator. Indeed, this follows from \cite{ber-tan}. We can now apply
Proposition \ref{prop:big-submodule} with $T'$ and $(S\otimes I_{\mathbb{C}^{N}})|M_{\Theta}[H^{2}\otimes\mathbb{C}^{N}]$
in place of $T$ and $S\otimes I_{\mathbb{C}^{N}}$ to deduce that
the Jordan model of $T'|\mathcal{M}'$ is of the form $S(\beta_{1})\oplus S(\beta_{2})\oplus\cdots\oplus S(\beta_{r})$
with the property that $\omega/\theta_{N+1-i_{x}}$ divides $\beta_{x}$
for $x=1,2,\dots,r$. It follows from \cite{ber-tan} that the Jordan
model of the compression of $T$ to $\mathcal{M}\ominus\mathcal{M}'$
has Jordan model $S(\alpha_{1})\oplus S(\alpha_{2})\oplus\cdots\oplus S(\alpha_{r})$,
where $\alpha_{r+1-x}\beta_{x}=\omega$ for $x=1,2,\dots,r$. We deduce
that $\alpha_{r+1-x}=\omega/\beta_{x}$ divides
\[
\frac{\omega}{\omega/\theta_{N+1-i_{x}}}=\theta_{N+1-i_{x}}
\]
or, equivalently, $\alpha_{x}|\theta_{N+1-i_{r+1-x}}$ for $x=1,2,\dots,r$.
Parts (1) and (2) of the statement follow now once we prove that $T|\mathcal{M}''=T|\mathcal{K}$
is quasisimilar to the compression of $T$ to $\mathcal{M}\ominus\mathcal{M}'$.
In fact, an operator $X$ which is one-to-one with dense range and
intertwines these two operators is obtained by setting $X=P_{\mathcal{H}(\Theta)}|\mathcal{M}\ominus\mathcal{M}'$.
The claimed properties of $X$ follow readily from Lemma \ref{lem:projections and invariant subspaces}.

Assume finally that $\alpha_{x}\equiv\theta_{N+1-i_{r+1-x}}$ for
$x=1,2,\dots,r$. We have then $\beta_{x}\equiv\omega/\theta_{N+1-x}$
for $x=1,2,\dots,r.$ The proof of Proposition \ref{prop:big-submodule}(3)
can now be applied with $T'$ in place of $T$ and $M_{\Theta}[H^{2}\otimes\mathbb{C}^{N}]$
in place of $H^{2}\otimes\mathbb{C}^{N}$. Following that argument,
and recalling that $P_{\mathcal{H}'}(\theta_{N+1-j}y_{N+1-j})$ form
a $C_{0}$-basis for $T'$, we first produce a $C_{0}$-basis $w_{1},w_{2},\dots,w_{r}$
for $T'|\mathcal{M}'$ such that
\[
w_{x}=P_{\mathcal{H}'}\sum_{j=1}^{N}u_{x,j}\theta_{j}y_{j},\quad x=1,2,\dots,r,
\]
with $\sum_{j=1}^{N}u_{x,j}\theta_{j}z_{j}\in\mathcal{Q}$, coefficients
$u_{x,j}\in H^{\infty}$ such that $u_{x,j}=0$ for $j=1,2,\dots,N+1-i_{x}$
and $\theta_{N+1-i_{x}}/\theta_{j}$ divides $u_{x,j}$ for $j\ge N-i_{x}$.
We now apply a reduction which allows us to assume that a family of
inner functions is totally ordered by divisibility. We partition $I$
into subsets $I_{1},I_{2},\dots,I_{p}$ with the property that $\theta_{N+1-i}\equiv\theta_{N+1-i'}$
if and only if $i$ and $i'$ belong to the same $I_{k}$, and apply
the arguments in the proof of Proposition \ref{prop:big-submodule}
to find minors of the form
\[
\det[u_{x,j}]_{N+1-x\in I_{k},N+1-j\in J_{k}}
\]
which are relatively prime to $\theta_{1}$. Here $J_{k}$ and $I_{k}$
have the same cardinality, and $\theta_{N+1-j}\equiv\theta_{N+1-i}$
for $i\in I_{k}$ and $j\in J_{k}$. The functions
\[
v_{x,j}=\frac{u_{x,j}\theta_{j}}{\theta_{N+1-i_{x}}}
\]
belong again to $H^{\infty}$, and thus we can define vectors $\widetilde{w}{}_{1},\widetilde{w}{}_{2},\dots,\widetilde{w}{}_{r}\in\mathcal{H}(\Theta)$
by setting
\[
\widetilde{w}{}_{x}=P_{\mathcal{H}(\Theta)}\sum_{j=1}^{N}v_{x,j}y_{j},\quad x=1,2,\dots,r.
\]
Since $\sum_{j=1}^{N}v_{x,j}y_{j}\in\mathcal{Q}$, these vectors actually
belong to $\mathcal{M}''=\mathcal{K}$, and they form a $C_{0}$-basis
for $T|\mathcal{K}$. The argument is now concluded by observing that
the set 
\[
\{\widetilde{w}{}_{x}\}_{x=1}^{r}\cup\left\{ P_{\mathcal{H}(\Theta)}y_{j}:j\in\{1,2,\dots,N\}\setminus\bigcup_{q=1}^{p}J_{q}\right\} 
\]
is a $C_{0}$-basis for $T$, so that an invariant quasidirect complement
for $\mathcal{K}$ is generated by the vectors
\[
\left\{ P_{\mathcal{H}(\Theta)}y_{j}:j\in\{1,2,\dots,N\}\setminus\bigcup_{q=1}^{p}J_{q}\right\} .\qedhere
\]

\end{proof}

\section{The Horn inequalities\label{sec:The-Horn-inequalities}}

In this section we consider an integer $N$ and three sets $I,J,K\subset\{1,2,\dots,N\}$,
each containing $r\le N$ elements, such that $c_{I\widetilde{J}\widetilde{K}}=1$,
where
\[
\widetilde{J}=\{N+1-j:j\in J\},\quad\widetilde{K}=\{N+1-k:k\in K\}.
\]
The Horn inequalities associated with such triples of sets are sufficient
to imply all the Horn inequalities associated with sets $I,J,K$ such
that $c_{I\widetilde{J}\widetilde{K}}>0$ (see \cite{belk} or \cite{KTW}).
Our main result is as follows.
\begin{thm}
\label{thm:Horn inequalities-finite case}Assume that $T$ is an operator
of class $C_{0}$ on $\mathcal{H}$, $\mathcal{H}'$ is an invariant
subspace for $T$, $\mathcal{H}''=\mathcal{H}\ominus\mathcal{H}'$,
and
\[
T=\begin{bmatrix}T' & *\\
0 & T''
\end{bmatrix}
\]
is the matrix of $T$ corresponding to the orthogonal decomposition
$\mathcal{H}=\mathcal{H}'\oplus\mathcal{H}''$. Let
\[
\bigoplus_{1\le n<\aleph}S(\theta_{n}),\ \bigoplus_{1\le n<\aleph}S(\theta'_{n}),\ \bigoplus_{1\le n<\aleph}S(\theta_{n}^{\prime\prime})
\]
 be the Jordan models of $T,T'$, and $T''$, respectively. Let $I,J,K\subset\{1,2,\dots,N\}$
be sets of cardinality $r\le N$ such that $c_{I\widetilde{J}\widetilde{K}}=1$.
Then there exists an invariant subspace $\mathcal{M}$ for $T$ with
the following properties.
\begin{enumerate}
\item The cyclic mutiplicity of $T|\mathcal{M}$ is at most $r$, and its
Jordan model $\bigoplus_{x=1}^{r}S(\beta_{x})$ is such that $\theta_{i_{x}}|\beta_{x}$
for $x=1,2,\dots,r$.
\item The Jordan model $\bigoplus_{x=1}^{r}S(\beta'_{x})$ of $T|\mathcal{M}'$,
where $\mathcal{M}'=\mathcal{M}\cap\mathcal{\mathcal{H}}'$ is such
that $\beta'_{x}|\theta'_{j_{x}}$ for $x=1,2,\dots,r$.
\item The Jordan model $\bigoplus_{x=1}^{r}S(\beta_{x}^{\prime\prime})$
of $T|\mathcal{M}''$, where $\mathcal{M}''=\overline{P_{\mathcal{H}''}\mathcal{M}}$,
is such that $\beta_{x}^{\prime\prime}|\theta_{k_{x}}^{\prime\prime}$
for $x=1,2,\dots,r$.
\item $\prod_{x=1}^{r}\beta_{x}=\prod_{x=1}^{r}(\beta'_{x}\beta_{x}^{\prime\prime})$.
\end{enumerate}

We conclude that
\[
\prod_{i\in I}\theta_{i}\left|\prod_{j\in J}\theta'_{j}\prod_{k\in K}\theta_{k}^{\prime\prime}.\right.
\]
If $\prod_{i\in I}\theta_{i}\equiv\prod_{j\in J}\theta'_{j}\prod_{k\in K}\theta_{k}^{\prime\prime}$
then $\mathcal{M}$ \emph{(}respectively, $\mathcal{M}',\mathcal{M}''$\emph{)
}has a $T$-invariant \emph{(}respectively, $T'$-invariant, $T''$-invariant\emph{)}
quasidirect complement in $\mathcal{H}$ \emph{(}respectively, $\mathcal{H},\mathcal{H}''$\emph{)}.

\end{thm}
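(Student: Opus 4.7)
The plan is to reduce to the finite defect case $T = S(\Theta)$ with $\Theta$ an $N\times N$ inner matrix, apply the intersection-theoretic machinery of Section~\ref{sec:Invariant-subspaces-vs-Schubert} to produce a single $\mathfrak{D}$-subspace $\mathcal{Q}$ lying in three Schubert varieties at once, and then read off the Jordan models of $T|\mathcal{M}$, $T|\mathcal{M}'$, and $T|\mathcal{M}''$ by combining Proposition~\ref{prop:big-submodule}, which bounds the Jordan model of $T|\mathcal{M}$ from below, with Proposition~\ref{prop:small invariant subspace}, which bounds those of $T'|\mathcal{M}'$ and $T''|\mathcal{M}''$ from above. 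The preliminary reduction from an arbitrary $C_0$ operator to one of class $C_0(N)$, done by fixing $N$ to dominate every index in $I\cup J\cup K$ and truncating $T$ to its first $N$ Jordan components while tracking the effect on $\mathcal{H}'$ and $\mathcal{H}''$, I expect to be handled in a paragraph preceding the theorem statement and to be independent of the intersection-theoretic core.

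Assuming $T = S(\Theta)$ with $\Theta = \Theta''\Theta'$, so that $T' = S(\Theta')$ is realized on $\mathcal{H}' = M_{\Theta''}\mathcal{H}(\Theta')$ and $T'' = S(\Theta'')$ on $\mathcal{H}'' = \mathcal{H}(\Theta'')$, I build three complete flags in $\mathfrak{D}\otimes\mathbb{C}^N$: the big flag $\mathcal{E}$ attached to a $C_0$-basis of $T$, as before Lemma~\ref{lem:projection of E_j}; the small flag $\mathcal{F}''$ produced for $T''$ by Lemma~\ref{lem:C0 basis in space and subspace}, as before Proposition~\ref{prop:small invariant subspace}; and a transported small flag $\widetilde{\mathcal{F}'} = M_{\Theta''}\mathcal{F}'$, where $\mathcal{F}'$ is the small flag supplied by Lemma~\ref{lem:C0 basis in space and subspace} for $T' = S(\Theta')$ and $M_{\Theta''}$ is extended to a $\mathfrak{D}$-linear automorphism of $\mathfrak{D}\otimes\mathbb{C}^N$, which is legitimate because $\det\Theta''$ is nonzero in $\mathfrak{D}$. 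A short computation using the identity $P_{\mathcal{H}'}M_{\Theta''} = M_{\Theta''}P_{\mathcal{H}(\Theta')}$, which follows from the orthogonal decomposition $M_{\Theta''}[H^2\otimes\mathbb{C}^N] = \mathcal{H}'\oplus M_\Theta[H^2\otimes\mathbb{C}^N]$, shows that $\mathcal{Q}\in\mathfrak{S}(\widetilde{\mathcal{F}'},\widetilde{J})$ is equivalent to $M_{\Theta''}^{-1}\mathcal{Q}\in\mathfrak{S}(\mathcal{F}',\widetilde{J})$, which is precisely the Schubert hypothesis required by Proposition~\ref{prop:small invariant subspace} applied to $T'$.

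Since $c_{I\widetilde{J}\widetilde{K}} = 1$, the intersection-theoretic statement recalled at the end of Section~\ref{sec:Preliminaries} produces some $\mathcal{Q}\in\mathfrak{S}(\mathcal{E},I)\cap\mathfrak{S}(\widetilde{\mathcal{F}'},\widetilde{J})\cap\mathfrak{S}(\mathcal{F}'',\widetilde{K})$. From this single $\mathcal{Q}$, I define $\mathcal{M}$, $\mathcal{M}'$, $\mathcal{M}''$ as in Lemma~\ref{lem:projections and invariant subspaces}. Proposition~\ref{prop:big-submodule}(2) applied to $T$ with $(\mathcal{E},I)$ yields $\theta_{i_x}\mid\beta_x$, giving~(1). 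Proposition~\ref{prop:small invariant subspace}(2) applied to $T''$ with $(\mathcal{F}'',\widetilde{K})$ yields $\beta_x''\mid\theta''_{N+1-(\widetilde{K})_{r+1-x}}$, and the arithmetic identity $(\widetilde{K})_{r+1-x} = N+1-k_x$ reduces this to $\beta_x''\mid\theta''_{k_x}$, giving~(3); the parallel argument with $T'$ and $(\mathcal{F}',\widetilde{J})$ gives $\beta_x'\mid\theta'_{j_x}$, proving~(2). Property~(4) is the product formula $\prod_x\beta_x\equiv\prod_x\beta_x'\beta_x''$ for invariant-subspace/quotient triples of a $C_0$ operator, recalled in the introduction. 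Combining (1)--(4) immediately yields $\prod_{i\in I}\theta_i\bigm|\prod_{j\in J}\theta'_j\prod_{k\in K}\theta''_k$.

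For the equality case, if $\prod_{i\in I}\theta_i\equiv\prod_{j\in J}\theta'_j\prod_{k\in K}\theta''_k$, then the chain
\[
\prod_x\theta_{i_x}\,\Bigm|\,\prod_x\beta_x\,\equiv\,\prod_x\beta_x'\beta_x''\,\Bigm|\,\prod_x\theta'_{j_x}\theta''_{k_x}
\]
collapses to equalities, and a routine $H^\infty$ divisibility argument (a product of inner functions equal to $1$ forces each factor to be constant) gives $\beta_x\equiv\theta_{i_x}$, $\beta_x'\equiv\theta'_{j_x}$, and $\beta_x''\equiv\theta''_{k_x}$ for every $x$. Proposition~\ref{prop:big-submodule}(3) then furnishes a $T$-invariant quasidirect complement of $\mathcal{M}$ in $\mathcal{H}(\Theta)$, and Proposition~\ref{prop:small invariant subspace}(3) applied separately to $T'$ and $T''$ furnishes the analogous quasidirect complements of $\mathcal{M}'\subset\mathcal{H}'$ and $\mathcal{M}''\subset\mathcal{H}''$. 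The main obstacle is keeping the three flags coordinated inside the single ambient space $\mathfrak{D}\otimes\mathbb{C}^N$, and in particular transporting $\mathcal{F}'$ correctly by $M_{\Theta''}$ so that one $\mathcal{Q}$ feeds all three propositions cleanly; the reduction from general $C_0$ to $C_0(N)$ is secondary and does not disturb the core argument.
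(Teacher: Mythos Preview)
Your core argument is correct and matches the paper's: build flags $\mathcal{E},\mathcal{F},\mathcal{G}$ in $\mathfrak{D}\otimes\mathbb{C}^N$ via Propositions~\ref{prop:big-submodule} and~\ref{prop:small invariant subspace}, pick $\mathcal{Q}$ in the triple Schubert intersection (nonempty since $c_{I\widetilde{J}\widetilde{K}}=1$), define $\mathcal{M},\mathcal{M}',\mathcal{M}''$ via Lemma~\ref{lem:projections and invariant subspaces}, and read off (1)--(4) and the saturation statement from parts (2) and (3) of those propositions. Your explicit transport of the $T'$-flag by $M_{\Theta''}$, together with the identity $P_{\mathcal{H}'}M_{\Theta''}=M_{\Theta''}P_{\mathcal{H}(\Theta')}$, spells out a step the paper compresses into the single clause ``Propositions~\ref{prop:big-submodule} and~\ref{prop:small invariant subspace} allow us to choose three complete flags.''

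The reduction from general $C_0$ to $C_0(N)$, however, is not dispatched in a paragraph before the theorem; it occupies the second half of the paper's proof and is a bit more involved than you suggest. First, truncating to the invariant subspace $\widetilde{\mathcal{H}}$ generated by the first $N$ pieces of quasidirect decompositions of $\mathcal{H},\mathcal{H}',\mathcal{H}''$ yields an operator of finite multiplicity $N'$ which in general exceeds $N$; one must then invoke the stability $c_{I\overline{J}\,\overline{K}}=c_{I\widetilde{J}\widetilde{K}}$ (where $\overline{J},\overline{K}$ are the complements taken in $\{1,\dots,N'\}$) to rerun the finite-defect argument with $N'$ in place of $N$. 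Second, in the saturated case with $\mu_T=\infty$, Proposition~\ref{prop:big-submodule}(3) and Proposition~\ref{prop:small invariant subspace}(3) only supply quasidirect complements inside the truncated space $\widetilde{\mathcal{H}}$ (respectively $\widetilde{\mathcal{H}}',\widetilde{\mathcal{H}}''$); the paper then appeals to Proposition~\ref{prop:completion-of-basis} to enlarge these to quasidirect complements in all of $\mathcal{H},\mathcal{H}',\mathcal{H}''$. Neither point is deep, but both are required, and your sketch omits them.
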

\begin{proof}
The key case to consider is that in which $T$ has cyclic multiplicity
at most equal to $N$. In this case we can replace $T$ by any operator
of class $C_{0}$ which is quasisimilar to it. Indeed, quasisimilarity
between operators of class $C_{0}$ with finite multiplicity allows
one to identify their lattices of invariant subspaces (see \cite[Proposition VII.1.21]{C0-book}).
We can then assume that $T=S(\Theta)$, where $\Theta$ is an $N\times N$
inner function, $\mathcal{H}=\mathcal{H}(\Theta)$, $\mathcal{H}''=\mathcal{H}(\Theta'')$,
and $\mathcal{H}'=M_{\Theta''}\mathcal{H}(\Theta')$ for some inner
factorization $\Theta=\Theta''\Theta'$. Propositions \ref{prop:big-submodule}
and \ref{prop:small invariant subspace} allow us to choose three
complete flags $\mathcal{E},\mathcal{F},\mathcal{G}$ in $\mathfrak{D}^{N}=\mathfrak{D}\otimes\mathbb{C}^{N}$
such that, given a space $\mathcal{Q}\in\mathfrak{S}(\mathcal{E},I)$
(respectively, $\mathfrak{S}(\mathcal{F},\widetilde{J}),\mathfrak{S}(\mathcal{G},\widetilde{K})$)
the restriction of $T$ (respectively, $T',T''$) to $[P_{\mathcal{H}}(\mathcal{Q}\cap(H^{2}\otimes\mathbb{C}^{N}))]^{-}$
(respectively, $[P_{\mathcal{H}'}(\mathcal{Q}\cap M_{\Theta''}(H^{2}\otimes\mathbb{C}^{N}))]^{-}$,
$[P_{\mathcal{H}''}(\mathcal{Q}\cap(H^{2}\otimes\mathbb{C}^{N}))]^{-}$)
has Jordan model $\bigoplus_{x=1}^{r}S(\beta_{x})$ (respectively,
$\bigoplus_{x=1}^{r}S(\beta'_{x})$, $\bigoplus_{x=1}^{r}S(\beta_{x}^{\prime\prime})$)
satisfying $\theta_{i_{x}}|\beta_{x}$ (respectively, $\beta'_{x}|\theta'_{j_{x}}$,
$\beta_{x}^{\prime\prime}|\theta_{k_{x}}^{\prime\prime}$) for $x=1,2,\dots,r$.
The assumption that $c_{I\widetilde{J}\widetilde{K}}=1$ implies now
that we can find a space $\mathcal{Q}$ in the intersection
\[
\mathfrak{S}(\mathcal{E},I)\cap\mathfrak{S}(\mathcal{F},\widetilde{J})\cap S(\mathcal{G},\widetilde{K}),
\]
and this yields immediately statements (1), (2), and (3). Statement
(4) follows from \cite[Theorem VI.3.16]{C0-book} and the fact that
$T''|\mathcal{M}''$ is quasisimilar to the compression of $T|\mathcal{M}$
to the space $\mathcal{M}\ominus\mathcal{M}'$. If $\prod_{i\in I}\theta_{i}\equiv\prod_{j\in J}\theta'_{j}\prod_{k\in K}\theta_{k}^{\prime\prime}$
then, of course, $\beta_{x}\equiv\theta_{i_{x}}$, $\beta'_{x}\equiv\theta'_{j_{x}}$,
and $\beta_{x}^{\prime\prime}\equiv\theta_{x}^{\prime\prime}$ for
$x=1,2,\dots,r$, and the existence of invariant quasidirect complements
follows from part (3) of Propositions \ref{prop:big-submodule} and
\ref{prop:small invariant subspace}.

We consider next an operator $T$ of class $C_{0}$ with finite multiplicity
$N'>N$. This case reduces to the previous one as follows. The sets
$I,J,K$ are also contained in $\{1,2,\dots,N'\}$, and setting
\[
\overline{J}=\{N'+1-j:j\in J\},\quad\overline{K}=\{N'+1-k:k\in K\},
\]
we still have $c_{I\overline{J}\,\overline{K}}=1$. (This fact is
verified using the Littlewood-Richardson rule for partitions. More
generally, $c_{I\overline{J}\,\overline{K}}=c_{I\widetilde{J}\widetilde{K}}$,
see \cite{young-tab,fulton-survey}.) Therefore the preceding argument
works simply replacing $N$ by $N'$.

Finally, assume that $T$ has infinite multiplicity, and consider
quasidirect decompositions
\[
\mathcal{H}=\bigvee_{1\le n<\aleph}\mathcal{H}_{n},\quad\mathcal{H}'=\bigvee_{1\le n<\aleph}\mathcal{H}'_{n},\quad\mathcal{H}''=\bigvee_{1\le n<\aleph}\mathcal{H}_{n}^{\prime\prime},
\]
 into invariant spaces for $T,T',T'',$ respectively, such that $T|\mathcal{H}_{n}\sim S(\theta_{n})$,
$T'|\mathcal{H}'_{n}\sim S(\theta_{n}')$, and $T''|\mathcal{H}_{n}^{\prime\prime}\sim S(\theta_{n}^{\prime\prime})$
for all $n<\aleph$. The invariant subspace $\widetilde{\mathcal{H}}$
for $T$ generated by the spaces $\mathcal{H}_{n},\mathcal{H}'_{n},\mathcal{H}_{n}^{\prime\prime}$
for $1\le n\le N$ has the property that $T|\widetilde{\mathcal{H}}$
has finite multiplicity. We can therefore apply the theorem, already
proved for the case of operators with finite multiplicity, with $T|\widetilde{\mathcal{H}}$
in place of $T$, $\widetilde{\mathcal{H}}'=\mathcal{H}'\cap\widetilde{\mathcal{H}}$
in place of $\mathcal{H}'$, and $\widetilde{\mathcal{H}}''=\widetilde{\mathcal{H}}\ominus\widetilde{\mathcal{H}}'$
in place of $\mathcal{H}''$. We obtain a $T|\widetilde{\mathcal{H}}$-invariant
subspace $\mathcal{M}\subset\widetilde{\mathcal{H}}$ which satisfies
requirements (1--4) of the theorem. Indeed, the first $N$ functions
in the Jordan model of $T|\widetilde{\mathcal{H}}$ are still $\theta_{1},\theta_{2},\dots,\theta_{N}$,
and similar observations hold for $T|\widetilde{\mathcal{H}}'$ and
the compression of $T$ to $\widetilde{\mathcal{H}}''$. It remains
to verify the final assertion of the theorem. Assume therefore that
$\prod_{i\in I}\theta_{i}\equiv\prod_{j\in J}\theta'_{j}\prod_{k\in K}\theta_{k}^{\prime\prime}$.
We already know that $\mathcal{M}$ has a $T|\widetilde{\mathcal{H}}$-invariant
quasidirect complement in $\widetilde{\mathcal{H}}$. Setting $N'=\mu_{T|\widetilde{\mathcal{H}}}$,
there exists a quasidirect decomposition
\[
\widetilde{\mathcal{H}}=\bigvee_{n=1}^{N'}\widetilde{\mathcal{H}}_{n}
\]
into invariant subspaces for $T$ such that $T|\widetilde{\mathcal{H}}_{n}\sim S(\theta_{n})$
for $n=1,2,\dots,N$, and
\[
\mathcal{M}=\bigvee_{i\in I}\widetilde{\mathcal{H}}_{i}.
\]
Proposition \ref{prop:completion-of-basis} allows us to construct
an invariant subspace $\mathcal{L}$ for $T$ such that
\[
\mathcal{H}=\mathcal{L}\vee\bigvee_{n=1}^{N}\widetilde{\mathcal{H}}_{n},\quad\mathcal{L}\cap\left[\bigvee_{n=1}^{N}\widetilde{\mathcal{H}}_{n}\right]=\{0\}.
\]
An invariant quasidirect complement for $\mathcal{M}$ is then given
by
\[
\mathcal{L}\vee\bigvee_{n\not\in I}\widetilde{\mathcal{H}}_{n}.
\]
Similar arguments show the existence of invariant quasidirect complements
for $\mathcal{M}'$ and $\mathcal{M}''$.
\end{proof}

\section{Operators on nonseparable spaces\label{sec:Operators-on-nonseparable}}

With the notation of Theorem \ref{thm:Horn inequalities-finite case},
we show that $\theta_{\beta}|\theta'_{\beta}\theta_{\beta}^{\prime\prime}$
for $\beta\ge\aleph_{0}$. This relation can also be established by
exhibiting an appropriate invariant subspace for $T$, but in this
case the subspace can be chosen to be reducing for $T$, as well as
for the invariant subspace $\mathcal{H}'$.
\begin{thm}
\label{thm:Horn inequalities-infinite case}Assume that $T$ is an
operator of class $C_{0}$ on $\mathcal{H}$, $\mathcal{H}'$ is an
invariant subspace for $T$, $\mathcal{H}''=\mathcal{H}\ominus\mathcal{H}'$,
and
\[
T=\begin{bmatrix}T' & *\\
0 & T''
\end{bmatrix}
\]
is the matrix of $T$ corresponding to the orthogonal decomposition
$\mathcal{H}=\mathcal{H}'\oplus\mathcal{H}''$. Let
\[
\bigoplus_{1\le n<\aleph}S(\theta_{n}),\ \bigoplus_{1\le n<\aleph}S(\theta'_{n}),\ \bigoplus_{1\le n<\aleph}S(\theta_{n}^{\prime\prime})
\]
with be the Jordan models of $T,T'$, and $T''$, respectively. Fix
an ordinal $\beta\ge\aleph_{0}$. There exist separable reducing spaces
$\mathcal{M},\mathcal{M}',\mathcal{M}''$ for $T,T',T''$, respectively,
such that $\mathcal{M}=\mathcal{M}'\oplus\mathcal{M}''$ and the Jordan
models of $T|\mathcal{M}$, $T'|\mathcal{M}'$, and $T''|\mathcal{M}''$
are orthogonal sums of countably many copies of $S(\theta_{\beta}),S(\theta'_{\beta}),$
and $S(\theta_{\beta}^{\prime\prime})$, respectively. In particular,
$\theta_{\beta}|\theta'_{\beta}\theta_{\beta}^{\prime\prime}$.\end{thm}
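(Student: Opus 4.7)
The plan is to construct the separable reducing subspaces $\mathcal{M}',\mathcal{M}'',\mathcal{M}$ by a countable closure argument seeded with the natural ``$\beta$-blocks'' of $\mathcal{H}'$ and $\mathcal{H}''$ provided by Theorem~\ref{thm:quasi-dir}. Applying that theorem to $T'$ on $\mathcal{H}'$, fix any limit ordinal $\lambda'\ge\aleph_0$ with ${\rm card}(\lambda')={\rm card}(\beta)$, so that $\theta'_{\lambda'+i}\equiv\theta'_\beta$ for every finite $i$. The orthogonality statement of Theorem~\ref{thm:quasi-dir} makes $\mathcal{N}':=\bigvee_{i<\aleph_0}\mathcal{H}'_{\lambda'+i}$ an orthogonal direct summand of $\mathcal{H}'$, hence a separable reducing subspace for $T'$ with Jordan model $\bigoplus_{i<\aleph_0}S(\theta'_\beta)$. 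Construct $\mathcal{N}''\subset\mathcal{H}''$ analogously, with Jordan model $\bigoplus_{j<\aleph_0}S(\theta''_\beta)$.

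Next I would arrange compatibility with the block structure via a countable bootstrap. Writing $A:=P_{\mathcal{H}'}T|_{\mathcal{H}''}$, set $\mathcal{M}'_0=\mathcal{N}'$, $\mathcal{M}''_0=\mathcal{N}''$, and recursively let $\mathcal{M}'_{k+1}$ (resp.~$\mathcal{M}''_{k+1}$) be the smallest $T'$-reducing (resp.~$T''$-reducing) subspace containing $\mathcal{M}'_k\cup A\mathcal{M}''_k$ (resp.~$\mathcal{M}''_k\cup A^*\mathcal{M}'_k$). Each stage enlarges a separable space by the bounded-operator image of a separable set and then takes the reducing closure, so separability is preserved. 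The unions $\mathcal{M}':=\overline{\bigcup_k\mathcal{M}'_k}$ and $\mathcal{M}'':=\overline{\bigcup_k\mathcal{M}''_k}$ are then separable reducing subspaces satisfying $A\mathcal{M}''\subset\mathcal{M}'$ and $A^*\mathcal{M}'\subset\mathcal{M}''$, which together with the reducing properties makes $\mathcal{M}:=\mathcal{M}'\oplus\mathcal{M}''$ reducing for $T$.

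The crux is to verify that the Jordan models come out exactly as claimed---countably many copies of $S(\theta'_\beta)$, $S(\theta''_\beta)$, $S(\theta_\beta)$ respectively---from which the divisibility $\theta_\beta\mid\theta'_\beta\theta''_\beta$ follows as a corollary. The lower bound is supplied by the inclusions $\mathcal{N}'\subset\mathcal{M}'$, $\mathcal{N}''\subset\mathcal{M}''$: these contribute at least countably many summands of the claimed size in each Jordan model. The upper bound requires showing that the minimal functions of $T'|\mathcal{M}'$, $T''|\mathcal{M}''$, and $T|\mathcal{M}$ divide $\theta'_\beta$, $\theta''_\beta$, $\theta_\beta$ respectively; once $\theta'_\beta(T'|\mathcal{M}')=0$ and $\theta''_\beta(T''|\mathcal{M}'')=0$ are established, the block form of the Sz.-Nagy--Foias calculus for the restricted operator $T|\mathcal{M}$ yields $(\theta'_\beta\theta''_\beta)(T|\mathcal{M})=0$, and in particular $\theta_\beta\mid\theta'_\beta\theta''_\beta$; together with the lower bound this pins down the three Jordan models as prescribed.

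The main obstacle is establishing the upper bound $\theta'_\beta(T'|\mathcal{M}')=0$ (and the analogous statement for $T''|\mathcal{M}''$): although the initial seeds $\mathcal{N}',\mathcal{N}''$ are annihilated by $\theta'_\beta(T')$ and $\theta''_\beta(T'')$, the off-diagonal images $A\mathcal{M}''_k$ and $A^*\mathcal{M}'_k$ that enter the bootstrap are not automatically in these kernels. Controlling them requires a careful analysis of the off-diagonal blocks $D_\theta$ in the Sz.-Nagy--Foias calculus of the block operator $T$, governed by the product identity $D_{\theta_1\theta_2}=\theta_1(T')D_{\theta_2}+D_{\theta_1}\theta_2(T'')$, and possibly an enlargement of the initial seeds $\mathcal{N}',\mathcal{N}''$ to absorb these mixing effects in advance.
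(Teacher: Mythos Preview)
Your countable-closure framework is exactly the right shape, and it is essentially what the paper does as well; the difference is that you have correctly isolated the obstruction but not the trick that removes it. The obstacle you name---that the off-diagonal images $A\mathcal{M}''_k$ and $A^{*}\mathcal{M}'_k$ need not lie in $\ker\theta'_\beta(T')$ and $\ker\theta''_\beta(T'')$---is genuine, and the product identity for $D_{\theta_1\theta_2}$ does not by itself control it: nothing prevents $A\mathcal{N}''$ from meeting the blocks $\mathcal{H}'_n$ with $n$ finite, where the local minimal function is $\theta'_n\not\equiv\theta'_\beta$. ``Enlarging the seeds'' cannot help unless you are willing to enlarge them outside $\ker\theta'_\beta(T')$, which defeats the purpose.

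The paper's resolution is to throw away the large blocks \emph{before} running the closure. One forms the smallest subspace $\mathcal{K}$ reducing both $T$ and $P_{\mathcal{H}'}$ that contains all of $\mathcal{H}_n,\mathcal{H}'_n,\mathcal{H}''_n$ for $n<\beth={\rm card}(\beta)$; since $\beth$ is infinite and each block is separable, $\dim\mathcal{K}\le\beth$. On $\mathcal{K}^{\perp}$ the minimal functions of the restrictions of $T,T',T''$ are exactly $\theta_\beta,\theta'_\beta,\theta''_\beta$ (the upper bounds because the high-index blocks are dense in $\mathcal{K}^\perp$, the lower bounds by a multiplicity count). Now run your bootstrap entirely inside $\mathcal{K}^{\perp}$: the upper bounds $m_{T'|\mathcal{M}'}\,|\,\theta'_\beta$, $m_{T''|\mathcal{M}''}\,|\,\theta''_\beta$, and $m_{T|\mathcal{M}}\,|\,\theta_\beta$ are automatic, and your seed argument supplies the matching lower bounds.

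A second, smaller gap: you seed only with $\mathcal{N}'$ and $\mathcal{N}''$, so even granting your upper bounds you obtain only $m_{T|\mathcal{M}}\,|\,\theta'_\beta\theta''_\beta$, not that the Jordan model of $T|\mathcal{M}$ is $\bigoplus_{n<\aleph_0}S(\theta_\beta)$. You need a third seed $\mathcal{N}\subset\mathcal{H}$ (built exactly as you built $\mathcal{N}'$, from a limit ordinal $\lambda$ with ${\rm card}(\lambda)=\beth$ in the quasidirect decomposition of $\mathcal{H}$) to force $\theta_\beta\,|\,\theta_n^{T|\mathcal{M}}$; together with $m_{T|\mathcal{K}^\perp}\,|\,\theta_\beta$ this pins down the model and yields $\theta_\beta\,|\,\theta'_\beta\theta''_\beta$ as the advertised corollary.
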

\begin{proof}
We may assume without loss of generality that $\theta_{\beta}\not\equiv1$.
Consider quasidirect decompositions
\[
\mathcal{H}=\bigvee_{1\le n<\aleph}\mathcal{H}_{n},\quad\mathcal{H}'=\bigvee_{1\le n<\aleph}\mathcal{H}'_{n},\quad\mathcal{H}''=\bigvee_{1\le n<\aleph}\mathcal{H}_{n}^{\prime\prime},
\]
 into invariant spaces for $T,T',T'',$ respectively, such that $T|\mathcal{H}_{n}\sim S(\theta_{n})$,
$T'|\mathcal{H}'_{n}\sim S(\theta_{n}')$, and $T''|\mathcal{H}_{n}^{\prime\prime}\sim S(\theta_{n}^{\prime\prime})$
for all $n<\aleph$. Denote by $\beth$ the cardinality of $\beta$,
and let $\mathcal{K}$ be the smallest subspace of $\mathcal{H}$
which reduces both $T$ and the orthogonal projection $P_{\mathcal{H}'}$
and contains all the spaces $\mathcal{H}_{n},\mathcal{H}'_{n},\mathcal{H}_{n}^{\prime\prime}$
for $n<\beth$. Since $\beth$ is transfinite and each $\mathcal{H}_{n}$
is separable, it follows that the space $\mathcal{K}$ has dimension
at most $\beth$. Moreover, the first $\aleph_{0}$ inner functions
in the Jordan model of $T|\mathcal{K}^{\perp}$ are equal to $\theta_{\beta}$,
and similar statements hold for $T'|\mathcal{H}'\cap\mathcal{K}^{\perp}$
and $T''|\mathcal{H}''\cap\mathcal{K}^{\perp}$. Indeed, the minimal
function of $T|\mathcal{K}^{\perp}$ divides $\theta_{\beta}$ because
$P_{\mathcal{K}^{\perp}}[\bigvee_{{\rm card}(n)\ge\beth}\mathcal{H}_{n}]$
is dense in $\mathcal{K}^{\perp}$. On the other hand, the multiplicity
of $T|[\varphi(T)\mathcal{K}^{\perp}]^{-}$ is at least $\beth$ if
$\varphi$ is an inner function which does not divide $\theta_{\beta}$.
Thus the Jordan model of $T|\mathcal{K}^{\perp}$ contains at least
$\beth$ summands equal to $S(\theta_{\beta})$. Choose invariant
subspaces $\mathcal{L},\mathcal{L}',\mathcal{L}''$ for $T|\mathcal{K}^{\perp}$,
$T'|\mathcal{H}'\cap\mathcal{K}^{\perp}$, $T''|\mathcal{H}''\cap\mathcal{K}^{\perp}$,
respectively, such that the Jordan models of the three restrictions
are orthogonal sums of countably many copies of $S(\theta_{\beta}),S(\theta'_{\beta}),$
and $S(\theta_{\beta}^{\prime\prime})$, respectively. The desired
space $\mathcal{M}$ is obtained as the smallest subspace containing
$\mathcal{L}\cup\mathcal{L}'\cup\mathcal{L}''$ which reduces both
$T$ and $P_{\mathcal{H}'}$. We then define $\mathcal{M}'=\mathcal{M}\cap\mathcal{H}'$
and $\mathcal{M}''=\mathcal{M}\cap\mathcal{H}''$. The space $\mathcal{M}$
is separable, and the Jordan model $\bigoplus_{1\le n<\aleph_{0}}S(\varphi_{n})$
satisfies $\varphi_{n}|\theta_{\beta}$ (because $\theta_{\beta}$
is the minimal function of $T|\mathcal{K}^{\perp}$ and $\mathcal{K}^{\perp}\supset\mathcal{M}$)
and $\theta_{\beta}|\varphi_{n}$ (because $\mathcal{M}\supset\mathcal{L})$.
Thus $T|\mathcal{M}$ has the desired Jordan model. Similar arguments
determine the Jordan models of $T'|\mathcal{M}'$ and $T''|\mathcal{M}''$.
\end{proof}

\section{Comments on the inverse problem\label{sec:inverse problem}}

Let $T$ be an operator of class $C_{0}$ on $\mathcal{H}$, let $\mathcal{H}'$
be an invariant subspace for $T$, set $\mathcal{H}''=\mathcal{H}\ominus\mathcal{H}'$,
and let
\[
T=\begin{bmatrix}T' & *\\
0 & T''
\end{bmatrix}
\]
be the matrix of $T$ corresponding to the orthogonal decomposition
$\mathcal{H}=\mathcal{H}'\oplus\mathcal{H}''$.  Assume further that
\begin{equation}
J=\bigoplus_{1\le n<\aleph}S(\theta_{n}),\quad J'=\bigoplus_{1\le n<\aleph}S(\theta'_{n}),\quad J''=\bigoplus_{1\le n<\aleph}S(\theta_{n}^{\prime\prime}),\label{eq:three friends}
\end{equation}
are the Jordan models of $T$, $T'$, and $T''$, respectively. We
have seen that the functions $\{\theta_{n},\theta'_{n},\theta_{n}^{\prime\prime}\}_{1\le n<\aleph}$
are subject to a collection of `inequalities' of the form $\prod_{i\in I}\theta_{i}|\prod_{j\in J}\theta'_{j}\prod_{k\in K}\theta_{k}^{\prime\prime}$
for some finite equipotent sets $I,J,K\subset\{1,2,\dots\}$, and
also that $\theta_{n}|\theta'_{n}\theta_{n}^{\prime\prime}$ if $n\ge\aleph_{0}$.
Other necessary conditions are that $\theta'_{n}|\theta_{n}$ and
$\theta_{n}^{\prime\prime}|\theta_{n}$ for $1\le n<\aleph$. A natural
question is whether these conditions on $J,J',J''$ are sufficient
for the existence of an operator $T$ of class $C_{0}$ and of an
invariant subspace $\mathcal{H}'$ for $T$ such that $T\sim J$,
$T|\mathcal{H}'\sim J'$, and $P_{\mathcal{H}^{\prime\perp}}T|\mathcal{H}^{\prime\perp}\sim J''$.
The answer is in the negative: if $\mu_{T}\le N<\infty$ we have $\theta_{n}\equiv1$
for $n>N$ and the models also satisfy the `determinant' condition
\[
\prod_{n=1}^{N}\theta_{n}\equiv\prod_{n=1}^{N}(\theta'_{n}\theta_{n}^{\prime\prime}).
\]
In this special case (with $\theta_{n}\equiv1$ if $n>N)$, the relations
outlined above are in fact sufficient for the existence of $T$ and
$\mathcal{H}'$ (see \cite{li-mull1,li-mull2} and, for the algebraic
case, \cite{klein-2} or \cite{macD}). An appropriate substitute
for the determinant condition has not been found in the case of infinite
multiplicity. We argue that, at least, the problem reduces to the
separable case.
\begin{prop}
Let $J,J',$ and $J''$ be Jordan operators given by \emph{(\ref{eq:three friends})}.
Assume that:
\begin{enumerate}
\item $\theta_{n}\equiv\theta_{1}$, $\theta'_{n}\equiv\theta'_{1}$, and
$\theta_{n}^{\prime\prime}\equiv\theta_{1}^{\prime\prime}$ for all
$n\le\aleph_{0}$,
\item $\theta_{n}'|\theta_{n}$ and $\theta_{n}^{\prime\prime}|\theta_{n}$
for all $n<\aleph$, and
\item $\theta_{n}|\theta'_{n}\theta_{n}^{\prime\prime}$ for all $n<\aleph$.
\end{enumerate}

Then there exists an operator $T$ of class $C_{0}$ and an invariant
subspace $\mathcal{H}'$ for $T$ such that $T\sim J$, $T|\mathcal{H}'\sim J'$,
and $P_{\mathcal{H}^{\prime\perp}}T|\mathcal{H}^{\prime\perp}\sim J''$.

\end{prop}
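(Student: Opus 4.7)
The plan is to reduce to separable sub-problems, one cardinality class at a time. Combining hypothesis (1) with the Jordan structural identity $\theta_n \equiv \theta_m$ whenever $\mathrm{card}(n) = \mathrm{card}(m)$, we see that each of $\theta_n, \theta'_n, \theta''_n$ depends only on $\mathrm{card}(n)$; for every cardinality $\kappa$ with $\aleph_0 \le \kappa$ occurring in $[1,\aleph)$ write $\theta_\kappa, \theta'_\kappa, \theta''_\kappa$ for the common values. Hypotheses (2) and (3) then translate to $\theta'_\kappa | \theta_\kappa$, $\theta''_\kappa | \theta_\kappa$, and $\theta_\kappa | \theta'_\kappa\theta''_\kappa$ for each such $\kappa$.

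For each such $\kappa$ I would invoke the separable case of the inverse problem to obtain a separable $C_0$ operator $T_\kappa$ on $\mathcal{K}_\kappa$, an invariant subspace $\mathcal{K}'_\kappa$, and $\mathcal{K}''_\kappa = \mathcal{K}_\kappa \ominus \mathcal{K}'_\kappa$, whose Jordan models are $\aleph_0$ copies of $S(\theta_\kappa)$, $S(\theta'_\kappa)$, and $S(\theta''_\kappa)$, respectively. When all blocks within each Jordan model coincide, the full Horn-type divisibility system collapses to the single inequality $\theta_\kappa | \theta'_\kappa\theta''_\kappa$ supplied by (3), the individual divisibilities are those of (2), and the infinite multiplicity eliminates any determinant-type obstruction, so the hypotheses required by the separable case are exactly those granted. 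Letting $\lambda_\kappa$ denote the number of ordinals $n \in [1,\aleph)$ with $\mathrm{card}(n) = \kappa$ (absorbing the finite ordinals into $\kappa = \aleph_0$ by virtue of (1)), assemble
$$\mathcal{H} = \bigoplus_\kappa \bigoplus_{\alpha < \lambda_\kappa} \mathcal{K}_\kappa,\quad T = \bigoplus_\kappa \bigoplus_{\alpha < \lambda_\kappa} T_\kappa,\quad \mathcal{H}' = \bigoplus_\kappa \bigoplus_{\alpha < \lambda_\kappa} \mathcal{K}'_\kappa.$$
Then $T$ is of class $C_0$ with minimal function $\theta_1$, $\mathcal{H}'$ is $T$-invariant, and $\mathcal{H}'' := \mathcal{H} \ominus \mathcal{H}' = \bigoplus_\kappa \bigoplus_\alpha \mathcal{K}''_\kappa$, so the compression $P_{\mathcal{H}''}T|\mathcal{H}''$ is the direct sum of the compressions of the $T_\kappa$.

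Because $\lambda_\kappa \cdot \aleph_0 = \lambda_\kappa$ for every $\kappa \ge \aleph_0$, the Jordan model of $T$ has $\lambda_\kappa$ blocks equal to $S(\theta_\kappa)$ for each $\kappa$, which matches $J$ after regrouping its blocks by cardinality class; the analogous computation for $T|\mathcal{H}'$ and the compression recovers $J'$ and $J''$. The main technical obstacle is this final identification of the Jordan models across an uncountable direct sum, which I would resolve by applying Proposition~\ref{prop:theta-from-multip} to compute the cyclic multiplicities of $T|\overline{\varphi(T)\mathcal{H}}$ as $\varphi$ ranges over the inner divisors of $\theta_1$, and then reading off each $\theta_n$ from the resulting cardinal function.
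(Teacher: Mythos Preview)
Your outer framework---splitting by cardinality class, building a separable piece for each class, and reassembling via $\lambda_\kappa\cdot\aleph_0=\lambda_\kappa$---is sound, and the cardinal bookkeeping in the last paragraph is correct. The gap is at the heart of the argument: you write ``invoke the separable case of the inverse problem'' to obtain each $T_\kappa$, but there is no such black box to call. The paper states explicitly, in the paragraph immediately preceding this proposition, that for infinite multiplicity an appropriate substitute for the determinant condition has not been found and the inverse problem remains open. Your sentence ``the infinite multiplicity eliminates any determinant-type obstruction, so the hypotheses required by the separable case are exactly those granted'' is therefore precisely the unproved assertion; you are assuming what the proposition exists to supply.

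What makes the proposition go through is that the particular separable instance you need---three Jordan models each consisting of $\aleph_0$ identical blocks $S(\theta)$, $S(\theta')$, $S(\theta'')$ with $\theta'\mid\theta$, $\theta''\mid\theta$, $\theta\mid\theta'\theta''$---admits a direct two-line construction, and that is exactly what the paper's proof provides rather than invoking any general result. One takes $T=J$ on $\bigoplus_n\mathcal{H}(\theta_n)$ and, within each $\omega$-block of ordinals, alternates between an invariant subspace of $\mathcal{H}(\theta_n)$ chosen so that the \emph{restriction} is $S(\theta'_n)$ and one chosen so that the \emph{compression} is $S(\theta''_n)$. The verification then uses only the divisibilities $\theta_n/\theta'_n\mid\theta''_n$ and $\theta_n/\theta''_n\mid\theta'_n$, which follow from (2) and (3), together with the absorption principle that $\bigoplus_\lambda S(\alpha)\oplus\bigoplus_\lambda S(\beta)\sim\bigoplus_\lambda S(\alpha)$ when $\beta\mid\alpha$ and $\lambda\ge\aleph_0$. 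Your proposal supplies the wrapping around this construction but not the construction itself, which is the entire content of the proof.
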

\begin{proof}
Define $T=J$ on $\mathcal{H}=\bigoplus_{1\le n<\aleph}S(\theta_{n})$
and $\mathcal{H}'=\bigoplus_{1\le n<\aleph}\mathcal{H}'_{n}$, where
for every ordinal number written as $n=m+k$ with $m$ a limit ordinal
and $k<\aleph_{0}$ we set
\[
\mathcal{H}'_{n}=\begin{cases}
(\theta_{n}/\theta'_{n})H^{2}\ominus\theta_{n}H^{2} & \text{if \ensuremath{k}is even,}\\
(\theta_{n}/\theta_{n}^{\prime\prime})H^{2}\ominus\theta_{n}H^{2} & \text{if \ensuremath{k}is odd.}
\end{cases}
\]
It is easy to verify that these objects satisfy the requirements of
the proposition.
\end{proof}
This proposition shows that there is no need for more elaborate conditions
on the functions $\theta_{n}$ for $n\ge\aleph_{0}$. It also reduces
the inverse problem, which remains open, to the separable case.


\begin{thebibliography}{10}
\bibitem{belk}P. Belkale, Local systems on $\mathbb{P}^{1}-S$ for
$S$ a finite set. \emph{Compositio Math.} \textbf{129} (2001), 67--86.

\bibitem{three-test-pro}H. Bercovici, Three test problems for quasisimilarity.
\emph{Canad. J. Math}. \textbf{39} (1987), 880--892.

\bibitem{C0-book}---------, \emph{Operator theory and arithmetic
in} $H^{\infty}$. American Mathematical Society, Providence, RI,
1988.

\bibitem{ber-tan}H. Bercovici and A. Tannenbaum, The invariant subspaces
of a uniform Jordan operator. \emph{J. Math. Anal. Appl.} \textbf{156}
(1991), 220--230.

\bibitem{BLT-on-LR1}H. Bercovici, W. S. Li, and T. Smotzer, A continuous
version of the Littlewood-Richardson rule and its application to invariant
subspaces. \emph{Adv. Math.} \textbf{134} (1998), 278--293.

\bibitem{BLT-on-LR2}---------, Continuous versions of the Littlewood-Richardson
rule, selfadjoint operators, and invariant subspaces. \emph{J. Operator
Theory} \textbf{54} (2005), 69--92.

\bibitem{extremal-structure}H. Bercovici and W. S. Li, Invariant
subspaces with extremal structure for operators of class $C_{0}$.
\emph{Operator theory, structured matrices, and dilations}, 115--123,
Theta Ser. Adv. Math., 7, Theta, Bucharest, 2007. 

\bibitem{sing-val}H. Bercovici, B. Collins, K. Dykema, and W. S.
Li, Characterization of singular numbers of products of operators
in matrix algebras and finite von Neumann algebras. arXiv:1306.6434.

\bibitem{bcdlt}H. Bercovici, B. Collins, K. Dykema, W. S. Li, and
D. Timotin, Intersections of Schubert varieties and eigenvalue inequalities
in an arbitrary finite factor. \emph{J. Funct. Anal}. \textbf{258}
(2010), 1579--1627.

\bibitem{BDL-Szeged}H. Bercovici, K. Dykema, and W. S. Li, The Horn
inequalities for submodules. \emph{Acta Sci. Math. (Szeged)} \textbf{79}
(2013), 17--30.

\bibitem{young-tab}W. Fulton, \emph{Young tableaux}. \emph{With applications
to representation theory and geometry}. Cambridge University Press,
Cambridge, 1997.

\bibitem{fulton-survey}---------, Eigenvalues, invariant factors,
highest weights, and Schubert calculus. \emph{Bull. Amer. Math. Soc.
(N.S.)} \textbf{37} (2000), 209--249.

\bibitem{klein-1}T. Klein, The multiplication of Schur-functions
and extensions of p-modules. \emph{J. London Math. Soc.} \textbf{43}
(1968), 280--284.

\bibitem{klein-2}---------, The Hall polynomial. \emph{J. Algebra}
\textbf{12} (1969), 61--78.

\bibitem{kly}A. A. Klyachko, Stable bundles, representation theory
and Hermitian operators. \emph{Selecta Math. (N.S.)} \textbf{4} (1998),
419--445.

\bibitem{KT}A. Knutson, Allen and T, Tao, The honeycomb model of
${\rm GL}_{n}(\mathbb{C})$ tensor products. I. Proof of the saturation
conjecture. \emph{J. Amer. Math. Soc.} \textbf{12} (1999), 1055--1090.

\bibitem{KTW}A. Knutson, T. Tao, and C. Woodward, The honeycomb model
of ${\rm GL}_{n}(\mathbb{C})$ tensor products. II. Puzzles determine
facets of the Littlewood-Richardson cone. \emph{J. Amer. Math. Soc.}
\textbf{17} (2004), 19--48.

\bibitem{li-mull1}W. S. Li and V. M\"uller, Littlewood-Richardson
sequences associated with $C_{0}$-operators. \emph{Acta Sci. Math.
(Szeged)} \textbf{64} (1998), 609--625.

\bibitem{li-mull2}---------, Invariant subspaces of nilpotent operators
and LR-sequences. \emph{Integral Equations Operator Theory} \textbf{34}
(1999), 197--226.

\bibitem{macD}I. G. Macdonald, \emph{Symmetric functions and Hall
polynomials. Second edition. With contributions by A. Zelevinsky}.
Oxford Science Publications. The Clarendon Press, Oxford University
Press, New York, 1995.

\bibitem{moore-nor}B. Moore, III and E. A. Nordgren, On quasi-equivalence
and quasi-similarity. \emph{Acta Sci. Math. (Szeged)} \textbf{34}
(1973), 311--316.

\bibitem{nordg}E. A. Nordgren, On quasi-equivalence of matrices over
$H^{\infty}$. \emph{Acta Sci. Math. (Szeged)} \textbf{34} (1973),
301--310.

\bibitem{SN}B. Sz.-Nagy, C. Foias, H. Bercovici, and L. K\'erchy,
\emph{Harmonic analysis of operators on Hilbert space. Second edition.}
Springer, New York, 2010.

\bibitem{thompson}R. C. Thompson, Divisibility relations satisfied
by the invariant factors of a matrix product, \emph{The Gohberg anniversary
collection, Vol. I} Birkh\"auser, Basel, 1989, pp. 471--491. \end{thebibliography}
\end{document}